\numberwithin{equation}{section}
\DeclareMathOperator{\id}{id}
\DeclareMathOperator{\GL}{GL}
\DeclareMathOperator{\TotalQuot}{Q}
\newcommand{\Kernel}{\operatorname{Ker}}
\DeclareMathOperator{\Aut}{Aut}
\newtheorem{thm}{Theorem}[section]
\newtheorem{defn}[thm]{Definition}
\newtheorem{prop}[thm]{Proposition}
\newtheorem{lem}[thm]{Lemma}
\newtheorem{cor}[thm]{Corollary}
\newtheorem{ex}[thm]{Example}
\newtheorem{rmk}[thm]{Remark}
\newtheorem{ass}[thm]{Assumption}
\newenvironment{notation}
	{\par\medskip\noindent\textbf{Notation:}\itshape}
	{\normalfont\par\medskip}
\newenvironment{acknowledgements}
	{\par\medskip\noindent\textbf{Acknowledgements.}}
	{\par\medskip}
\begin{document}

\title{Galois theory of Artinian simple module algebras}
\author{Florian Heiderich}
\address{
Institut de Mathématiques de Jussieu,
Case 247,
Université Pierre et Marie Curie,
4~place Jussieu,
75252 Paris Cedex 05
}
\email{heiderich@math.jussieu.fr}

\subjclass[2010]{Primary 13N99, 16T10; Secondary 12H05, 12H10, 12H20, 34M15}

\keywords{Galois theory, module algebra}

\thanks{The author thanks La Fondation Sciences Mathématiques de Paris for financial support. He was also partially supported by the Spanish grant MTM2009-07024.}

\begin{abstract}
This main purpose of this article is the unification of the Galois theory of algebraic differential equations by Umemura and the Galois theory of algebraic difference equations by Morikawa-Umemura in a common framework using Artinian simple $D$-module algebras, where $D$ is a bialgebra.
We construct the Galois hull of an extension of Artinian simple $D$-module algebras and define its Galois group, which consists of infinitesimal coordinate transformations fulfilling certain partial differential equations and which we call Umemura functor.
We eliminate the restriction to characteristic $0$ from the above mentioned theories and remove the limitation to field extensions in the theory of Morikawa-Umemura, allowing also direct products of fields, which is essential in the theory of difference equations.
In order to compare our theory with the Picard-Vessiot theory of Artinian simple $D$-module algebras due to Amano and Masuoka, we first slightly generalize the definition and some results about them in order to encompass as well non-inversive difference rings.
Finally, we give equivalent characterizations for smooth Picard-Vessiot extensions, describe their Galois hull and show that their Umemura functor{} becomes isomorphic to the formal scheme associated to the classical Galois group scheme after a finite étale base extension.
\end{abstract}

\maketitle

\section*{Introduction}
The idea behind differential Galois theory, namely to study differential equations using group theoretical methods, dates back to Lie.
Picard and Vessiot realized a Galois theory for linear differential equations, having affine group schemes as Galois groups.
After an attempt by Drach and work by Vessiot, Umemura developed a Galois theory for non-linear algebraic differential equations (cf. \cite{Umemura:1996b}).
To an extension of differential fields $L | K$ of characteristic $0$ that is finitely generated as an extension of fields he associates a new extension $\mathcal{L} | \mathcal{K}$, the Galois hull of $L | K$, and attaches a group functor to it, the so called the infinitesimal Galois group of $L | K$.
The latter is a Lie-Ritt functor, i.e. a group functor of infinitesimal transformations fulfilling certain partial differential equations, which turns out to also be a formal group scheme.
A theory with a similar aim was developed by Malgrange in the framework of differential geometry and applied by Casale (cf. \cite{Malgrange:2001}, \cite{Casale:2007}, \cite{Casale:2008}).

With a delay in time similar theories were realized for difference equations.
Recently, Umemura sketched a difference analogue of his differential Galois theory in \cite{Umemura:2006} and developed it together with Morikawa (cf. \cite{Morikawa:2009}, \cite{MorikawaUmemura:2009}).
To an extension of difference fields $L | K$, i.e. fields equipped with an endomorphism, that is finitely generated as extension of fields they also assign an infinitesimal Galois group, which is a Lie-Ritt functor as in the differential case.
Analogues of the theory of Malgrange for non-linear ($q$-)difference equations have been developed by Casale and Granier (cf. \cite{Casale:2006}, \cite{Granier:2009}).

The purpose of this article is twofold.
On the one hand, we unify the differential Galois theory of Umemura and the difference Galois theory of Morikawa-Umemura by using Artinian simple commutative $D$-module algebras, where $D$ is a bialgebra.
Differential fields and difference fields are special instances of Artinian simple commutative $D$-module algebras for certain choices of $D$.
At the other hand, we generalize the theories of Morikawa and Umemura.
We note first that most of the above mentioned theories have been restricted to characteristic $0$.
Hasse and Schmidt introduced higher and iterative derivations as a replacement for derivations in positive characteristic (cf. \cite{HasseSchmidt:1937}) and, using them, differential Galois theories have been developed by Matzat, Okugawa and van der Put (cf. \cite{Okugawa:1987}, \cite{MatzatVanDerPut:2003}).
We first remove the restriction to characteristic $0$ from the theories of Umemura and Morikawa-Umemura using iterative derivations instead of derivations.
So our theory could be the starting point to tackle the problem raised by Morikawa and Umemura in \cite{MorikawaUmemura:2009} whether the results they obtain there have analogues in positive characteristic.
Second, we eliminate the restriction to fields from the difference Galois theory of Morikawa-Umemura by also allowing direct products of fields equipped with an injective endomorphism (they are Artinian simple commutative $D$-module algebras for a particular choice of the bialgebra $D$).
This approach is more natural, since the total Picard-Vessiot rings of difference equations are in general not difference fields, but only direct products of fields equipped with an endomorphism.
Amano and Masuoka unified the Picard-Vessiot theories for differential and difference extensions by using Artinian simple commutative $D$-module algebras as well (cf. \cite{AmanoMasuoka:2005}).
Though they restrict themselves to Hopf algebras $D$ and therefore non-inversive difference extensions are not within their scope.
Here we do not limit ourselves to Hopf algebras and use a broader class of bialgebras such that our theory encompasses as well non-inversive difference rings.
In order to compare our general Galois group with the classical Galois group scheme of Picard-Vessiot extensions, we first define Picard-Vessiot extensions of Artinian simple commutative $D$-module algebras in a slightly more general context than Amano and Masuoka, i.e. without the assumption that $D$ is a Hopf algebra, and show some of their basic properties.
Finally, we prove that in the case of smooth Picard-Vessiot extensions of Artinian simple commutative $D$-module algebras our general Galois group is closely related to the usual Galois group scheme.

More precisely this article is organized as follows:
In the first section we recall the definition of $D$-module algebras and some of their properties, where $D$ is a bialgebra over a field $C$.
We obtain some results about the extension of $D$-module algebra structures to tensor products and limits, which we need later.
We close the section with the definition and equivalent characterizations of Artinian simple $D$-module algebras, generalizing results of Amano and Masuoka.
The second section is the heart of the article.
We explain in detail the above-mentioned
unification and generalization of the differential Galois theory of Umemura and the difference Galois theory of Morikawa and Umemura.
Let $G$ be a monoid and $D^1$ an irreducible pointed cocommutative Hopf algebra of Birkhoff-Witt type that is a $C G$-module algebra;
we define $D$ to be the smash product $D^1 \# C G$.
If $L | K$ is an extension of Artinian simple commutative $D$-module algebras fulfilling a separability and finiteness condition, then we construct its Galois hull $\mathcal{L} | \mathcal{K}$ and associate to it a group functor $\operatorname{Ume}(L | K)$ on the category of commutative $L$-algebras that we call the Umemura functor{} in honor of its inventor in the case of extensions of differential fields.
We show that the latter is a Lie-Ritt functor and so in particular a formal group scheme (cf. theorem~\ref{thm:InfGal_is_a_Lie-Ritt_functor}).
These constructions unify and generalize those of Umemura and Morikawa and, using the language of $D$-module algebras, arguments become more transparent.
In section~\ref{sec:PV_extensions_of_Artinian_simple_module_algebras} we define Picard-Vessiot extensions of Artinian simple commutative $D$-module algebras and prove some of their properties, generalizing results of Amano and Masuoka, cf. \cite{AmanoMasuoka:2005}, where they are proven in the case where $D$ is a Hopf algebra satisfying some additional conditions.
We close this section with a list of several equivalent characterizations of the property that the principal $D$-module algebra $R$ of a Picard-Vessiot extension of Artinian simple commutative $D$-module algebras $L | K$ is smooth over $K$ (cf. proposition~\ref{prop:smoothness_of_Picard-Vessiot_extensions}).
In the last section we investigate this type of Picard-Vessiot extensions by describing their Galois hull and by comparing the Umemura functor{} of such an extension with its Galois group scheme.
Their Galois hull is of a particularly simple form (cf. lemma~\ref{lem:Lcal_is_generated_by_Kcal_and_gth_of_L_if_extension_is_Picard-Vessiot}) and we show that the Umemura functor{} of this kind of extension becomes isomorphic to the formal group scheme associated to its Galois group scheme after a base extension to a finite étale extension of $L$ (cf. theorem~\ref{thm_comparison_between_InfGal_and_Gal}).
This demonstrates that the Umemura functor in a way generalizes the Galois group scheme of Amano and Masuoka, though at the cost of generality some information is lost (we do not recover the Galois group scheme itself, but only a base extension of the formal group scheme associated to it).
The construction of the Galois hull $\mathcal{L} | \mathcal{K}$, the Umemura functor $\operatorname{Ume}(L | K)$ and the comparison with the classical Galois group scheme $\operatorname{Gal}(L|K)$ are illustrated in two examples of simple Picard-Vessiot extensions of iterative differential fields.

This article develops further results of the authors thesis, where only extensions of $D$-module fields were considered (cf. \cite{Heiderich:2010}).

\begin{notation}
We assume all rings and algebras to be unital and associative, but not necessarily to be commutative.
Homomorphisms of algebras are assumed to respect the units.
We further assume that all coalgebras are counital and coassociative, but not necessarily to be cocommutative.
Homomorphisms of coalgebras are assumed to respect the counits.
An algebra $(A, m, \eta)$ will be abbreviated by $A$ and multiplication and unit will then be denoted by $m_A$ and $\eta_A$, respectively.
Similarly a coalgebra $(D, \Delta, \varepsilon)$ will be abbreviated by $D$ and comultiplication and counit will then be denoted by $\Delta_D$ and $\varepsilon_D$, respectively.
If $D$ is a coalgebra and $d \in D$, then we use the $\Sigma$-notation $\Delta_D(d) = \sum_{(d)} d_{(1)} \otimes d_{(2)}$ (cf. \cite[Section 1.2]{Sweedler:1969} or \cite[1.4.2]{Montgomery:1993}).

If $R$ is a commutative ring,
then we denote by $\TotalQuot(R)$ the total ring of fractions of $R$,
by $\Omega(R)$ the set of minimal prime ideals of $R$,
by $N(R)$ the nilradical of $R$
and by $\pi_{R} \colon R \to R/N(R)$ the canonical projection.
We denote the category of commutative algebras over $R$ by ${\mathsf{CAlg}_{R}}$ and the category of left $R$-modules by ${}_{R}\mkern-1mu\mathcal{M}$; furthermore $\mathsf{Grp}$ denotes the category of groups.

If $\mathsf{C}$ is a category and $A$ and $B$ are objects in $\mathsf{C}$, then we denote the class of morphisms from $A$ to $B$ in $\mathsf{C}$ by $\mathsf{C}(A,B)$.
We denote the opposite category of $\mathsf{C}$ by ${\mathsf{C}}^{op}$.

The category of sets is denoted by $\mathsf{Set}$.
If $A$ and $B$ are sets and $a \in A$, then we denote by $\operatorname{ev}_{a} \colon \mathsf{Set}(A,B) \to B$ the evaluation map, i.e. $\operatorname{ev}_{a}(f) = f(a)$ for all $f \in \mathsf{Set}(A,B)$.
If $A' \subseteq A$ is a subset, then $\operatorname{res}^{A}_{A'} \colon \mathsf{Set}(A,B) \to \mathsf{Set}(A',B)$ denotes the restriction map.
We denote by $M_n(A)$ the set of $n \times n$-matrices with coefficients in $A$
and for elements $a,b \in A$ we denote by $\delta_{a,b}$ the Kronecker delta, i.e. $\delta_{a,a} = 1$ and $\delta_{a,b}=0$ if $a \neq b$.

If $f \colon A \to B$ is a homomorphism of rings and ${\bm{w}} = (w_{1}, \dots, w_{n})$ are algebraically independent elements over $B$, then $f\llbracket {\bm{w}} \rrbracket \colon A\llbracket {\bm{w}} \rrbracket \to B\llbracket {\bm{w}} \rrbracket$ denotes the homomorphism defined by $f\llbracket {\bm{w}} \rrbracket(\sum_{{\bm{k}} \in {\mathbb N}^{n}} a_{{\bm{k}}} {\bm{w}}^{{\bm{k}}}) = \sum_{{\bm{k}} \in {\mathbb N}^{n}} f(a_{{\bm{k}}}) {\bm{w}}^{{\bm{k}}}$.
\end{notation}

\section{Module algebras}
\begin{notation}
	Let $C$ be a commutative ring.
\end{notation}

\subsection{Definitions and basic properties}

We recall that for $C$-modules $A, B$ and $D$ there is an isomorphism of $C$-modules
\begin{equation}
	{}_{C}\mkern-1mu\mathcal{M}(D \otimes_C A, B) \to {}_{C}\mkern-1mu\mathcal{M}(A, {}_{C}\mkern-1mu\mathcal{M}(D,B)), \quad \Psi \mapsto (a \mapsto (d \mapsto \Psi(d \otimes a))).
	\label{eqn:fundamental_isomorphism}
\end{equation}

\begin{lem}
	If $(D, \Delta_D, \varepsilon_D)$ is a $C$-coalgebra and $(B, m_B, \eta_B)$ is a $C$-algebra, then the $C$-module ${}_{C}\mkern-1mu\mathcal{M}(D,B)$ becomes a $C$-algebra with respect to the \emph{convolution product}\index{convolution product}, defined by
	\[f \cdot g \coloneqq m_B \circ (f \otimes g) \circ \Delta_D\]
	for $f,g \in {}_{C}\mkern-1mu\mathcal{M}(D,B)$, and unit element given by the composition
	\[ D \xrightarrow{\varepsilon_D} C \xrightarrow{\eta_B} B. \]
	Furthermore, $D$ is cocommutative if and only if ${}_{C}\mkern-1mu\mathcal{M}(D,B)$ is commutative for every commutative $C$-algebra $B$.
	\label{lem_algebra_structure_on_Mod_C_D_B}
\end{lem}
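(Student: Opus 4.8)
The plan is to treat the two assertions separately: first that the convolution product endows ${}_{C}\mkern-1mu\mathcal{M}(D,B)$ with the structure of a $C$-algebra, and then the cocommutativity criterion. For the first assertion I would observe that the convolution product is $C$-bilinear, since $m_B$, $\Delta_D$ and the tensor product are all $C$-linear, so it descends to a $C$-linear map ${}_{C}\mkern-1mu\mathcal{M}(D,B) \otimes_C {}_{C}\mkern-1mu\mathcal{M}(D,B) \to {}_{C}\mkern-1mu\mathcal{M}(D,B)$. Associativity is a direct diagram chase: expanding $(f \cdot g) \cdot h$ and $f \cdot (g \cdot h)$ as $m_B \circ (m_B \otimes \id) \circ (f \otimes g \otimes h) \circ (\Delta_D \otimes \id) \circ \Delta_D$ and $m_B \circ (\id \otimes m_B) \circ (f \otimes g \otimes h) \circ (\id \otimes \Delta_D) \circ \Delta_D$ respectively, the two agree by associativity of $m_B$ and coassociativity of $\Delta_D$. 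That $u \coloneqq \eta_B \circ \varepsilon_D$ is a two-sided unit follows analogously from the counit axiom $(\varepsilon_D \otimes \id) \circ \Delta_D = \id = (\id \otimes \varepsilon_D) \circ \Delta_D$ together with the unit axiom $m_B \circ (\eta_B \otimes \id) = \id = m_B \circ (\id \otimes \eta_B)$, after the identifications $C \otimes_C D \cong D$ and $C \otimes_C B \cong B$. These verifications are routine.

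For the second assertion, write $\tau_V \colon V \otimes_C V \to V \otimes_C V$ for the switch map of a $C$-module $V$; then cocommutativity of $D$ means $\tau_D \circ \Delta_D = \Delta_D$ and commutativity of $B$ means $m_B \circ \tau_B = m_B$. The key computational identity, checked on elementary tensors, is $(f \otimes g) \circ \tau_D = \tau_B \circ (g \otimes f)$. Assuming $D$ cocommutative and $B$ commutative, I would compute
\[ f \cdot g = m_B \circ (f \otimes g) \circ \Delta_D = m_B \circ (f \otimes g) \circ \tau_D \circ \Delta_D = m_B \circ \tau_B \circ (g \otimes f) \circ \Delta_D = m_B \circ (g \otimes f) \circ \Delta_D = g \cdot f, \]
so ${}_{C}\mkern-1mu\mathcal{M}(D,B)$ is commutative for every commutative $C$-algebra $B$.

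The converse is where the only genuine work lies, since commutativity of $B$ a priori loses the order of the two tensor factors of $\Delta_D(d)$, and this order must be recovered. The plan is to choose a test algebra together with test maps that tag the two factors distinctly. I would take $B \coloneqq \operatorname{Sym}_C(D) \otimes_C \operatorname{Sym}_C(D)$, which is a commutative $C$-algebra, and let $f, g \in {}_{C}\mkern-1mu\mathcal{M}(D,B)$ be the two degree-one inclusions $f(x) = x \otimes 1$ and $g(x) = 1 \otimes x$. A short computation in Sweedler notation then gives $(f \cdot g)(d) = \sum_{(d)} d_{(1)} \otimes d_{(2)}$ and $(g \cdot f)(d) = \sum_{(d)} d_{(2)} \otimes d_{(1)}$, both lying in the bidegree-$(1,1)$ summand of $B$, which is canonically $D \otimes_C D$. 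The hypothesized commutativity forces $f \cdot g = g \cdot f$, whence $\Delta_D(d) = \tau_D(\Delta_D(d))$ for every $d \in D$, i.e. $D$ is cocommutative. The main obstacle is thus identifying this test pair; once it is found, everything reduces to the computation just indicated.
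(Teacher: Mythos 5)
Your proof is correct. Note, though, that the paper does not actually prove this lemma: it simply cites \cite[1.3]{BrzezinskiWisbauer:2003}, so there is no in-text argument to compare against, and your write-up supplies strictly more detail than the source. The first part (bilinearity, associativity from coassociativity of $\Delta_D$ and associativity of $m_B$, the unit from the counit and unit axioms) is the standard verification and is fine. The only place requiring an idea is the converse of the cocommutativity criterion, and your test object $B = \operatorname{Sym}_C(D) \otimes_C \operatorname{Sym}_C(D)$ with the two degree-one inclusions $f$ and $g$ handles it cleanly: the products $(f\cdot g)(d)$ and $(g\cdot f)(d)$ land in the bidegree-$(1,1)$ component, which is canonically $D \otimes_C D$ because $\operatorname{Sym}^1_C(D) = D$, so $f\cdot g = g\cdot f$ forces $\Delta_D = \tau_D \circ \Delta_D$. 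One point worth making explicit: in this section the paper only assumes $C$ is a commutative ring, not a field, but your argument never needs flatness or freeness of $D$ over $C$ --- the grading on the symmetric algebras isolates the $(1,1)$ component regardless --- so it is valid in the stated generality.
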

\begin{proof}
	See for example \cite[1.3]{BrzezinskiWisbauer:2003}
\end{proof}

\begin{prop}
	Let $D$ be a $C$-coalgebra and let $A$ and $B$ be $C$-algebras.
	If $\Psi$ is an element of ${}_{C}\mkern-1mu\mathcal{M}(D \otimes_C A, B)$ and $ \rho_{}  \in {}_{C}\mkern-1mu\mathcal{M}(A, {}_{C}\mkern-1mu\mathcal{M}(D,B))$ is the image of $\Psi$ under the isomorphism (\ref{eqn:fundamental_isomorphism}),
	then the following are equivalent:
	\begin{enumerate}
		\item $ \rho_{} $ is a homomorphism of $C$-algebras, \label{eqn_prop_characterization_of_measuring_gth_is_algebra_homomorphism}
		\item for all $d \in D$ and all $a,b \in A$
		\begin{enumerate}
			\item $\Psi(d \otimes ab) = \sum_{(d)} \Psi(d_{(1)} \otimes a) \Psi(d_{(2)} \otimes b)$ if $\Delta_D(d) = \sum_{(d)} d_{(1)} \otimes d_{(2)}$ and
			\item $\Psi(d \otimes 1_A) = \varepsilon_D(d) 1_B$,
		\end{enumerate}
		hold and
		\label{eqn_prop_characterization_of_measuring_concrete}
		\item the diagrams
		\begin{align*}
		\xymatrixcolsep{3.8em}
		\xymatrixrowsep{3em}
		\xymatrix{
			D \otimes_C A \otimes_C A \ar@{->}[r]^{\id_D \otimes m_A} \ar@{->}[d]^{\Delta_D \otimes \id_A \otimes \id_A} & D \otimes_C A \ar@{->}[r]^{\Psi} & B \\
		D \otimes_C D \otimes_C A \otimes_C A \ar@{->}[r]^{\id_D \otimes \tau \otimes \id_A} & D \otimes_C A \otimes_C D \otimes_C A \ar@{->}[r]^-{\Psi \otimes \Psi} & B \ar[u]^{m_B}\\
		}
		\end{align*}
		and
		\begin{align*}
		\xymatrixcolsep{3.8em}
		\xymatrixrowsep{3em}
		\xymatrix{
			D \otimes_C C \ar@{->}[r]^{\varepsilon_D \otimes \eta_B} \ar@{->}[d]^{\id_D \otimes \eta_A} & C \otimes_C B \ar@{->}[d]^{\sim} \\
			D \otimes_C A \ar@{->}[r]^{\Psi} & B \\
		}
		\end{align*}
		commute. \label{eqn_prop_characterization_of_measuring_diagrams}
	\end{enumerate}
	\label{prop_characterization_of_measuring}
\end{prop}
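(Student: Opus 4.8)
The plan is to prove the two equivalences $(1) \Leftrightarrow (2)$ and $(2) \Leftrightarrow (3)$, treating condition (2) as the central elementwise formulation to which both the algebraic statement (1) and the diagrammatic statement (3) reduce. Throughout I would exploit the fact that, by construction of the isomorphism \eqref{eqn:fundamental_isomorphism}, the maps $\Psi$ and $\rho$ are related by $\rho(a)(d) = \Psi(d \otimes a)$ for all $a \in A$ and $d \in D$, and that $\rho$ is automatically $C$-linear, being an element of ${}_{C}\mkern-1mu\mathcal{M}(A, {}_{C}\mkern-1mu\mathcal{M}(D,B))$; hence $\rho$ is a homomorphism of $C$-algebras exactly when it preserves products and the unit with respect to the convolution structure of Lemma~\ref{lem_algebra_structure_on_Mod_C_D_B}.

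For $(1) \Leftrightarrow (2)$ I would unwind these two conditions. Evaluating the convolution product $\rho(a) \cdot \rho(b) = m_B \circ (\rho(a) \otimes \rho(b)) \circ \Delta_D$ at an arbitrary $d \in D$ and writing $\Delta_D(d) = \sum_{(d)} d_{(1)} \otimes d_{(2)}$ yields $\sum_{(d)} \Psi(d_{(1)} \otimes a)\,\Psi(d_{(2)} \otimes b)$, while $\rho(ab)$ evaluated at $d$ is $\Psi(d \otimes ab)$; thus $\rho(ab) = \rho(a)\cdot\rho(b)$ for all $a,b$ is precisely condition (2)(a). For the unit, the identity element of ${}_{C}\mkern-1mu\mathcal{M}(D,B)$ is $\eta_B \circ \varepsilon_D$, sending $d$ to $\varepsilon_D(d)\,1_B$, so $\rho(1_A)$ equals this unit if and only if $\Psi(d \otimes 1_A) = \varepsilon_D(d)\,1_B$ for all $d$, which is condition (2)(b). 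Hence (1) holds exactly when both (a) and (b) do.

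For $(2) \Leftrightarrow (3)$ I would simply trace both composites of each square on a general elementary tensor. In the first diagram the top-right composite sends $d \otimes a \otimes b$ to $\Psi(d \otimes ab)$, whereas the lower composite, applying $\Delta_D$, the twist $\tau$, then $\Psi \otimes \Psi$ and finally $m_B$, produces $\sum_{(d)} \Psi(d_{(1)} \otimes a)\,\Psi(d_{(2)} \otimes b)$, so commutativity of this square is equivalent to (2)(a). In the second diagram the top composite sends $d \otimes c$ to $\varepsilon_D(d)\,c\,1_B$ and the other composite sends it to $c\,\Psi(d \otimes 1_A)$ (using $C$-linearity of $\Psi$); specializing to $c = 1$ and conversely extending by linearity shows commutativity here is equivalent to (2)(b). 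This closes the chain of equivalences.

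Since every step is a direct unwinding of the definitions of the convolution product, the correspondence \eqref{eqn:fundamental_isomorphism}, and the structure maps occurring in the diagrams, I do not expect any genuine obstacle; this is essentially the standard identification of measurings with algebra homomorphisms into the convolution algebra. The only point demanding care is the bookkeeping: keeping the $\Sigma$-notation consistent, and correctly reading off the roles of the twist map $\tau$ and the unit maps $\eta_A$, $\eta_B$, so that the elementwise identities in (2) match the diagrammatic ones in (3) term by term.
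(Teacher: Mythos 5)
Your argument is correct and is exactly the standard unwinding that the paper relies on: the paper's own proof simply cites Sweedler's Proposition 7.0.1 for the equivalence of (1) and (2) and declares (2) $\Leftrightarrow$ (3) clear, and what you have written out is precisely that routine verification via the convolution product and element-wise diagram chasing. No issues.
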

\begin{proof}
	The equivalence between \ref{eqn_prop_characterization_of_measuring_gth_is_algebra_homomorphism} and \ref{eqn_prop_characterization_of_measuring_concrete} can be proven as in \cite[Proposition 7.0.1]{Sweedler:1969} and the one between \ref{eqn_prop_characterization_of_measuring_concrete} and \ref{eqn_prop_characterization_of_measuring_diagrams} is clear.
\end{proof}

\begin{defn}
\label{defn:measuring}
	Let $D$ be a $C$-coalgebra and $A$ and $B$ be $C$-algebras.
	If $\Psi \in {}_{C}\mkern-1mu\mathcal{M}(D \otimes_C A, B)$, then we say that $\Psi$ \emph{measures}\index{measuring} $A$ to $B$ if the equivalent conditions in proposition \ref{prop_characterization_of_measuring} are satisfied.

	If $A_{1}, A_{2}, B_{1}$ and $B_{2}$ are $C$-algebras, $\Psi_{1} \colon D \otimes_{C} A_{1} \to B_{1}$ measures $A_{1}$ to $B_{1}$ and $\Psi_{2} \colon D \otimes_{C} A_{2} \to B_{2}$ measures $A_{2}$ to $B_{2}$, then we say that homomorphisms $\varphi_{A} \colon A_{1} \to A_{2}$ and $\varphi_{B} \colon B_{1} \to B_{2}$ of $C$-algebras are \emph{compatible with the measurings} if the diagram
	\begin{align*}
		\xymatrix{
			D \otimes_{C} A_{1} \ar[d]^{ \id_{D} \otimes \varphi_{A}} \ar[r]^-{\Psi_{1}} & B_{1} \ar[d]^{\varphi_{B}} \\
			D \otimes_{C} A_{2}  \ar[r]^-{\Psi_{2}} & B_{2}
		}
	\end{align*}
commutes.
\end{defn}

The following lemmata are clear from the definition.

\begin{lem}
	\label{lem:equivalent_defining_conditions_for_module_algebras}
	Let $D$ be a $C$-bialgebra and $A$ be a $C$-algebra.
		If $\Psi \in {}_{C}\mkern-1mu\mathcal{M}(D \otimes_{C} A, A)$ and $ \rho_{}  \colon A \to {}_{C}\mkern-1mu\mathcal{M}(D,A)$ is the homomorphism associated to $\Psi$ via \eqref{eqn:fundamental_isomorphism}, then $\Psi$ makes $A$ into a $D$-module if and only if the diagrams
		\begin{align*}
			\xymatrixcolsep{5em}
			\xymatrix{A \ar[r]^-{ \rho_{} } \ar[d]^{ \rho_{} } & {}_{C}\mkern-1mu\mathcal{M}(D,A) \ar[d]^{{}_{C}\mkern-1mu\mathcal{M}(D, \rho_{} )} \\
			{}_{C}\mkern-1mu\mathcal{M}(D,A) \ar[r]^-{{}_{C}\mkern-1mu\mathcal{M}(m_D,A)} & {}_{C}\mkern-1mu\mathcal{M}(D \otimes_{C} D, A) \cong {}_{C}\mkern-1mu\mathcal{M}(D, {}_{C}\mkern-1mu\mathcal{M}(D,A))
			}
		\end{align*}
		and
		\begin{align*}
			\xymatrixcolsep{5em}
			\xymatrix{A \ar[r]^-{ \rho_{} } \ar[rd]^{\id} & {}_{C}\mkern-1mu\mathcal{M}(D,A) \ar[d]^{\operatorname{ev}_{1_D}} \\
			 & A
			}
		\end{align*}
		commute.
\end{lem}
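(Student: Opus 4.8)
The plan is to translate both diagrams into pointwise identities on $\Psi$ by means of the currying isomorphism \eqref{eqn:fundamental_isomorphism}, and then to recognise the resulting identities as the two defining axioms of a module over the algebra $(D, m_D, \eta_D)$ underlying the bialgebra $D$. Concretely, unwinding \eqref{eqn:fundamental_isomorphism} gives $\rho(a)(d) = \Psi(d \otimes a)$ for all $a \in A$ and $d \in D$, and the isomorphism ${}_{C}\mkern-1mu\mathcal{M}(D \otimes_C D, A) \cong {}_{C}\mkern-1mu\mathcal{M}(D, {}_{C}\mkern-1mu\mathcal{M}(D,A))$ occurring in the first diagram is the instance of the same adjunction identifying $\Phi$ with $(d' \mapsto (d \mapsto \Phi(d \otimes d')))$. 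Recall finally that, by definition, $\Psi$ makes $A$ into a $D$-module exactly when $\Psi \circ (m_D \otimes \id_A) = \Psi \circ (\id_D \otimes \Psi)$ and $\Psi \circ (\eta_D \otimes \id_A) = \id_A$ hold, i.e. pointwise when $\Psi(dd' \otimes a) = \Psi(d \otimes \Psi(d' \otimes a))$ and $\Psi(1_D \otimes a) = a$ for all $d, d' \in D$ and $a \in A$.

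Next I would chase $a \in A$ through the first diagram. Along the right-hand path ${}_{C}\mkern-1mu\mathcal{M}(D, \rho)$ is post-composition with $\rho$, so $a$ is sent to $\rho \circ \rho(a)$, which satisfies $(\rho \circ \rho(a))(d)(d') = \Psi(d' \otimes \Psi(d \otimes a))$ and hence, under the displayed adjunction isomorphism, corresponds to the element $d \otimes d' \mapsto \Psi(d \otimes \Psi(d' \otimes a))$ of ${}_{C}\mkern-1mu\mathcal{M}(D \otimes_C D, A)$. Along the bottom path ${}_{C}\mkern-1mu\mathcal{M}(m_D, A)$ is pre-composition with $m_D$, so $a$ is sent to $\rho(a) \circ m_D$, that is to $d \otimes d' \mapsto \Psi(dd' \otimes a)$. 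Therefore the first diagram commutes if and only if the associativity identity $\Psi(dd' \otimes a) = \Psi(d \otimes \Psi(d' \otimes a))$ holds.

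Finally, chasing $a$ through the second diagram yields $\operatorname{ev}_{1_D}(\rho(a)) = \rho(a)(1_D) = \Psi(1_D \otimes a)$ along the top-right path and $a$ itself along the diagonal, so that diagram commutes if and only if $\Psi(1_D \otimes a) = a$ for all $a$. Since the two diagrams thus encode precisely the associativity and unit axioms recalled above, $\Psi$ makes $A$ into a $D$-module if and only if both diagrams commute.

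The only delicate point in this argument is the bookkeeping for the two adjunction isomorphisms and the ordering of the tensor factors in ${}_{C}\mkern-1mu\mathcal{M}(D \otimes_C D, A) \cong {}_{C}\mkern-1mu\mathcal{M}(D, {}_{C}\mkern-1mu\mathcal{M}(D,A))$: one must keep track of which factor of $D \otimes_C D$ becomes the inner and which the outer argument, so as to arrive at the left-module axiom $d \cdot (d' \cdot a) = (dd') \cdot a$ rather than its opposite. Once this is handled correctly, everything reduces to the routine verification above, which is why the statement may fairly be called clear from the definition.
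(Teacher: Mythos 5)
Your proof is correct: the paper offers no argument at all for this lemma (it is introduced with ``The following lemmata are clear from the definition''), and your diagram chase is exactly the routine verification being left implicit, including the correct handling of which tensor factor of $D \otimes_C D$ becomes the outer argument under the adjunction so that the left-module axiom $\Psi(dd' \otimes a) = \Psi(d \otimes \Psi(d' \otimes a))$ comes out rather than its opposite.
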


\begin{lem}
	\label{lem:equivalent_defining_conditions_morphisms_compatible_with_measurings}
	Let $D$ be a $C$-coalgebra and $A_1, A_2, B_1$ and $B_2$ be $C$-algebras.
	If $\Psi_{1} \in {}_{C}\mkern-1mu\mathcal{M}(D \otimes_C A_1, B_1)$ measures $A_1$ to $B_1$ and $\Psi_{2} \in {}_{C}\mkern-1mu\mathcal{M}(D \otimes_C A_2, B_2)$ measures $A_2$ to $B_2$ and $ \rho_{1} $ and $ \rho_{2} $ are the associated homomorphisms of $C$-algebras, then homomorphisms of $C$-algebras $\varphi_A \colon A_1 \to A_2$ and $\varphi_B \colon B_1 \to B_2$ are compatible with the measurings if and only if the diagram
	\begin{align*}
		\xymatrix{
			A_1 \ar[r]^-{ \rho_{1} } \ar[d]^{\varphi_A} & {}_{C}\mkern-1mu\mathcal{M}(\mkern-1mu D, \mkern-1mu B_1) \ar[d]^{{}_{C}\mkern-1mu\mathcal{M}(\mkern-1mu D,\varphi_B)} \\
			A_2 \ar[r]^-{ \rho_{2} } & {}_{C}\mkern-1mu\mathcal{M}(\mkern-1mu D, \mkern-1mu B_2)
		}
	\end{align*}
	commutes.
\end{lem}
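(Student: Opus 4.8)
The plan is to reduce both conditions to a single elementary identity by unwinding them through the fundamental isomorphism \eqref{eqn:fundamental_isomorphism}, which is natural in both of its arguments. The guiding observation is that the compatibility square of Definition~\ref{defn:measuring} lives on the ``$\Psi$-side'' of \eqref{eqn:fundamental_isomorphism}, whereas the square to be established lives on the ``$\rho$-side''; since \eqref{eqn:fundamental_isomorphism} is a bijection, the two squares encode the very same equation.

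First I would record, directly from the definition of \eqref{eqn:fundamental_isomorphism}, that the associated homomorphisms satisfy $\rho_1(a)(d) = \Psi_1(d \otimes a)$ for all $a \in A_1$, $d \in D$, and likewise $\rho_2(a')(d) = \Psi_2(d \otimes a')$ for all $a' \in A_2$. I would also note that the right-hand vertical arrow ${}_{C}\mkern-1mu\mathcal{M}(D,\varphi_B)$ is post-composition with $\varphi_B$, that is, $f \mapsto \varphi_B \circ f$. The key structural remark is then that both composites in the compatibility square, namely $\varphi_B \circ \Psi_1$ and $\Psi_2 \circ (\id_D \otimes \varphi_A)$, are elements of the single module ${}_{C}\mkern-1mu\mathcal{M}(D \otimes_C A_1, B_2)$, so they may be compared by applying the instance of \eqref{eqn:fundamental_isomorphism} with source algebra $A_1$ and target algebra $B_2$.

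It then remains to compute the images of these two composites in ${}_{C}\mkern-1mu\mathcal{M}(A_1, {}_{C}\mkern-1mu\mathcal{M}(D, B_2))$. Unwinding the definitions, the first is sent to $a \mapsto \bigl(d \mapsto \varphi_B(\Psi_1(d \otimes a))\bigr) = a \mapsto \varphi_B \circ \rho_1(a)$, which is exactly ${}_{C}\mkern-1mu\mathcal{M}(D,\varphi_B) \circ \rho_1$; the second is sent to $a \mapsto \bigl(d \mapsto \Psi_2(d \otimes \varphi_A(a))\bigr) = a \mapsto \rho_2(\varphi_A(a))$, which is $\rho_2 \circ \varphi_A$. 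Because \eqref{eqn:fundamental_isomorphism} is a bijection, equality of the two composites $\varphi_B \circ \Psi_1 = \Psi_2 \circ (\id_D \otimes \varphi_A)$ is therefore equivalent to ${}_{C}\mkern-1mu\mathcal{M}(D,\varphi_B) \circ \rho_1 = \rho_2 \circ \varphi_A$, which is precisely the commutativity of the stated square. There is no genuine obstacle here; the only point demanding a little care is bookkeeping, namely keeping track of which instance of \eqref{eqn:fundamental_isomorphism} is being applied and correctly identifying the functorial maps $\id_D \otimes \varphi_A$ and ${}_{C}\mkern-1mu\mathcal{M}(D,\varphi_B)$ as the relevant pre- and post-composition operations.
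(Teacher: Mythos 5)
Your argument is correct and is precisely the reasoning the paper has in mind: the paper states this lemma without proof (``clear from the definition''), and your unwinding of both composites through the bijection \eqref{eqn:fundamental_isomorphism} — identifying $\varphi_B \circ \Psi_1$ with ${}_{C}\mkern-1mu\mathcal{M}(D,\varphi_B) \circ \rho_{1}$ and $\Psi_2 \circ (\id_D \otimes \varphi_A)$ with $\rho_{2} \circ \varphi_A$ — is exactly the intended justification. No issues.
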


\begin{defn}
\label{defn:module_algebra}
	Let $A$ be a $C$-algebra, $D$ be a $C$-bialgebra and let $\Psi \in {}_{C}\mkern-1mu\mathcal{M}(D \otimes_C A,A)$ measure $A$ to $A$.
	We say that $\Psi$ is a \emph{$D$-module algebra structure} on $A$ if the equivalent conditions in lemma~\ref{lem:equivalent_defining_conditions_for_module_algebras} are fulfilled.
	The pair $(A,\Psi)$ is then called \emph{$D$-module algebra}.

	A \emph{commutative $D$-module algebra} is a $D$-module algebra $(A,\Psi)$ such that the $C$-algebra $A$ is commutative.

	A \emph{homomorphism of $D$-module algebras} from $(A_1,\Psi_1)$ to $(A_2,\Psi_2)$ is a homomorphism of $C$-algebras $\varphi \colon A_1 \to A_2$ that fulfills the equivalent conditions of lemma~\ref{lem:equivalent_defining_conditions_morphisms_compatible_with_measurings} (with $B_1 = A_1$ and $B_2=A_2$).
\end{defn}

\begin{notation}
	If $\Psi \colon D \otimes_C A \to B$ is a homomorphism of $C$-modules, then we denote by $ \rho_{}  \colon A \to {}_{C}\mkern-1mu\mathcal{M}(D,B)$ the homomorphism corresponding to $\Psi$ under the isomorphism \eqref{eqn:fundamental_isomorphism} and vice versa.
	If $d \in D$ and $a \in A$, then we denote $\Psi(d \otimes a)$ also by $d.a$ if there is no danger of confusion.
\end{notation}

\begin{defn}
\label{def:constants}
	For a $C$-coalgebra $D$, a $C$-module $V$ and $\Psi \in {}_{C}\mkern-1mu\mathcal{M}(D \otimes_C V, V)$ we define the \emph{constants}\index{constants} of $V$ with respect to $\Psi$ as
	\[ V^{\Psi} \coloneqq \{v \in V \mid \Psi(d \otimes v) = \varepsilon_D(d)v \quad \text{ for all } d \in D \}. \]
	If $ \rho_{}  \in {}_{C}\mkern-1mu\mathcal{M}(V, {}_{C}\mkern-1mu\mathcal{M}(D,V))$ is the element corresponding to $\Psi$ under the isomorphism \eqref{eqn:fundamental_isomorphism}, then we denote $V^{\Psi}$ also by $V^{ \rho_{} }$.
	Sometimes we will also denote it by $V^D$.
\end{defn}

\begin{ex}
\label{ex:module_algebra_structures}
Let $A$ be a commutative $C$-algebra.
\begin{enumerate}
	\label{ex_D-module_algebra_structures}
	\item
	If $D_{der} \coloneqq C[{\mathbb G}_a]$ is the Hopf algebra on the coordinate ring of the additive group scheme ${\mathbb G}_a$ over $C$, then $D_{der}$-module algebra structures on $A$ are in 1-1 correspondence with $C$-derivations on $A$.

	\item
	If $D_{aut} \coloneqq C[{\mathbb G}_m]$ is the Hopf algebra on the coordinate ring of the multiplicative group scheme ${\mathbb G}_m$ over $C$, then $D_{aut}$-module algebra structures on $A$ are in 1-1 correspondence with automorphisms of the $C$-algebra $A$.

	\item
	\label{itm:ex:module_algebra_structures:endomorphisms}
	If $D_{end} \coloneqq C[t]$ is the polynomial algebra over $C$ with coalgebra structure defined by $\Delta(t^n) = t^n \otimes t^n$ and $\varepsilon(t^n) = 1$ for all $n \in {\mathbb N}$, then $D_{end}$-module algebra structures on $A$ are in 1-1 correspondence with endomorphisms of the $C$-algebra $A$.

	\item
	\label{itm:ex:module_algebra_structures:group_like_bialgebras_over_a_monoid}
	For a monoid $G$ we define a $C$-bialgebra $C G$ by taking the monoid algebra $CG$ as the underlying algebra with the coalgebra structure defined by $\Delta(g) = g \otimes g$ and $\varepsilon(g) = 1$ for all $g \in G$.
	Operations of the monoid $G$ as endomorphisms on the $C$-algebra $A$ are in 1-1 correspondence with $C G$-module algebra structure on $A$.

	If $G = ({\mathbb N}, +)$ is the monoid of natural numbers, then the bialgebra $C{\mathbb N}$ is isomorphic to $D_{end}$.
	In the case where $G = ({\mathbb Z}, +)$ is the group of integers, the bialgebra $C {\mathbb Z}$ is isomorphic to $D_{aut}$.

	\item
	Let $D_{I \mkern-1.9mu D^{}}$ be the free $C$-module with basis consisting of $\theta_{}^{(k)}$ for all $k \in {\mathbb N}$ with $C$-algebra structure defined by
	\[ \theta_{}^{(i)} \theta_{}^{(j)} \coloneqq \binom{i+j}{i} \theta_{}^{(i+j)} \quad \text{and} \quad 1 \coloneqq \theta_{}^{(0)} \]
	for all $i,j \in {\mathbb N}$ and $C$-coalgebra structure defined by
	\[ \Delta(\theta_{}^{(i)}) \coloneqq \sum_{i_1 + i_2 = i} \theta_{}^{(i_1)} \otimes \theta_{}^{(i_2)} \quad \text{and} \quad \varepsilon(\theta_{}^{(i)}) = \delta_{i,0} \]
	for all $i \in {\mathbb N}$.
	Then $D_{I \mkern-1.9mu D^{}}$ becomes a Hopf algebra with antipode defined by $S(\theta_{}^{(i)}) = (-1)^{i} \theta_{}^{(i)}$ and $D_{I \mkern-1.9mu D^{}}$-module algebra structures on $A$ are in 1-1 correspondence with iterative derivations on $A$ over $C$ (cf. \cite{HasseSchmidt:1937} or \cite[\S 27]{Matsumura:1989}).
	We note that the bialgebras $D_{der}$ and $D_{I \mkern-1.9mu D^{}}$ are isomorphic if ${\mathbb Q} \subseteq C$.
	Therefore derivations and iterative derivations are equivalent on ${\mathbb Q}$-algebras.

	We note that there are differences in the definition of higher derivations.
	In \cite{Matsumura:1989}, \cite{Heiderich:2007} and \cite{Heiderich:2010} a condition on $\theta_{}^{(0)}$ is posed, which is not present in \cite{Sweedler:1969}.
	Using the definition of Sweedler, higher derivation from $A$ to another commutative $C$-algebra $B$ are in 1-1 correspondence with $D_{I \mkern-1.9mu D^{}}$-measurings from $A$ to $B$.
	When $B$ is a commutative $A$-algebra via $A \overset{g}{\to} B$, then higher derivations (of length $\infty$) in the sense of Matsumura (cf. \cite[\S 27]{Matsumura:1989}) are in 1-1 correspondence with $D_{I \mkern-1.9mu D^{}}$-measurings from $A$ to $B$ such that the associated homomorphism of $C$-algebras $ \rho_{}  \colon A \to {}_{C}\mkern-1mu\mathcal{M}(D,B)$ fulfills $\operatorname{ev}_{1_D} \circ  \rho_{}  = g$.

	\item	If $m \in {\mathbb N}$ and $D_{H \mkern-1.9mu D_{(m)}^{}}$ is the free $C$-module with basis consisting of $\theta_{}^{(0)}, \dots, \theta_{}^{(m)}$
	and $C$-coalgebra structure defined by
	\[ \Delta(\theta_{}^{(i)}) \coloneqq \sum_{i_1 + i_2 = i} \theta_{}^{(i_1)} \otimes \theta_{}^{(i_2)} \quad \text{and} \quad \varepsilon(\theta_{}^{(i)}) = \delta_{i,0} \]
	for all $i \in \{0, \dots, m\}$,
	then $D_{H \mkern-1.9mu D_{(m)}^{}}$-measurings from $A$ to another commutative $C$-algebra $B$ are in 1-1 correspondence with higher derivations of length $m$ from $A$ to $B$ over $C$ in the sense of Sweedler (cf. \cite{Sweedler:1969}).
	When $B$ is a commutative $A$-algebra via $A \overset{g}{\to} B$, then higher derivations of length $m$ in the sense of Matsumura (cf. \cite[\S 27]{Matsumura:1989}) are in 1-1 correspondence with $D_{H \mkern-1.9mu D_{(m)}^{}}$-measurings from $A$ to $B$ such that the associated homomorphism of $C$-algebras $ \rho_{}  \colon A \to {}_{C}\mkern-1mu\mathcal{M}(D,B)$ fulfills $\operatorname{ev}_{1_D} \circ  \rho_{}  = g$.
	We note that $D_{I \mkern-1.9mu D^{}}$ is isomorphic to $\varinjlim_{m \in {\mathbb N}} D_{H \mkern-1.9mu D_{(m)}^{}}$ as $C$-coalgebra.

	\item For every $C$-bialgebra $D$, there is a $D$-module algebra structure
	\[ {\Psi_{0}} \colon D \otimes_C A \to A \]
	on $A$ defined as the composition
	\begin{align*}
	\xymatrixcolsep{3em}
	\xymatrix{ D \otimes_{C} A \ar[r]^{\varepsilon_{D} \otimes \id_{A}} & C \otimes_{C} A \ar[r]^-{\sim} & A.}
	\end{align*}
	We call ${\Psi_{0}}$ the \emph{trivial $D$-module algebra structure} on $A$.
\end{enumerate}
\end{ex}

\begin{ex}
\label{ex_duals_and_GTHs}
Let $A$ be a commutative $C$-algebra.
For the $C$-bialgebras $D$ in example~\ref{ex_D-module_algebra_structures} the $C$-algebras ${}_{C}\mkern-1mu\mathcal{M}(D,A)$ and the homomorphisms $ \rho_{}  \colon A \to {}_{C}\mkern-1mu\mathcal{M}(D,A)$ associated to $D$-module algebra structures $\Psi \in {}_{C}\mkern-1mu\mathcal{M}(D \otimes_C A, A)$ on $A$ are well known:
\begin{enumerate}
	\item If ${\mathbb Q} \subseteq A$, then ${}_{C}\mkern-1mu\mathcal{M}(D_{der},A)$ is isomorphic to the formal power series ring $A\llbracket w \rrbracket$.
	If $\partial$ is a $C$-derivation on $A$, $\Psi$ the corresponding $D_{der}$-module algebra structure on A  and $ \rho_{} $ is the homomorphism of $C$-algebras corresponding to $\Psi$ via \eqref{eqn:fundamental_isomorphism}, then the composition $A \overset{ \rho_{} }{\to} {}_{C}\mkern-1mu\mathcal{M}(D_{der},A) \overset{\sim}{\rightarrow} A\llbracket w \rrbracket$ is given by $a \mapsto \sum_{k \in {\mathbb N}} \frac{\partial^k(a)}{k!} w^k$.
	This homomorphism appears in \cite{Umemura:1996b} and is called \emph{universal Taylor homomorphism} there.

	In contrast, ${}_{C}\mkern-1mu\mathcal{M}(D_{der},A)$ is not reduced in positive characteristic.
It is isomorphic to the ring of so called \emph{Hurwirtz series} as defined by Keigher (cf. \cite{Keigher:1997}).

	\item The $C$-algebra ${}_{C}\mkern-1mu\mathcal{M}(D_{aut},A)$ is isomorphic to $A^{\mathbb Z}$, the ring of maps from the integers to $A$ with pointwise addition and multiplication.
	If $\sigma$ is an automorphism of the $C$-algebra $A$, $\Psi$ the corresponding $D_{aut}$-module algebra structure and $ \rho_{} $ the associated homomorphism via \eqref{eqn:fundamental_isomorphism}, then the composition $A \overset{ \rho_{} }{\to} {}_{C}\mkern-1mu\mathcal{M}(D_{aut},A) \overset{\sim}{\rightarrow} A^{\mathbb Z}$ is given by $a \mapsto (k \mapsto \sigma^{k}(a))$.
	\item The $C$-algebra ${}_{C}\mkern-1mu\mathcal{M}(D_{end},A)$ is isomorphic to $A^{\mathbb N}$, the ring of maps from the natural numbers to $A$ with pointwise addition and multiplication.
	If $\sigma$ is an endomorphism of the $C$-algebra $A$, $\Psi$ the corresponding $D_{end}$-module algebra structure and $ \rho_{} $ the associated homomorphism via \eqref{eqn:fundamental_isomorphism}, then the composition $A \overset{ \rho_{} }{\to} {}_{C}\mkern-1mu\mathcal{M}(D_{end},A) \overset{\sim}{\rightarrow} A^{\mathbb N}$ is given by $a \mapsto (k \mapsto \sigma^k(a))$.
	This homomorphism appears in \cite{Morikawa:2009} and is called \emph{universal Euler homomorphism} there.

	\item The $C$-algebra ${}_{C}\mkern-1mu\mathcal{M}(C G, A)$ is isomorphic to $A^G$, the ring of maps from $G$ to $A$ with pointwise addition and multiplication.
	If $\Psi \colon C G \otimes_C A \to A$ is the $C G$-module algebra structure on $A$ corresponding to an operation of $G$ on $A$ and $ \rho_{}  \colon A \to {}_{C}\mkern-1mu\mathcal{M}(C G, A)$ the corresponding homomorphism, then the composition $A \to {}_{C}\mkern-1mu\mathcal{M}(C G, A) \to A^G$ is given by $a \mapsto (g \mapsto g.a)$.

	\item The $C$-algebra ${}_{C}\mkern-1mu\mathcal{M}(D_{I \mkern-1.9mu D^{}},A)$ is isomorphic to $A\llbracket w \rrbracket$ and if $(\theta_{}^{(k)})_{k \in {\mathbb N}}$ is an iterative derivation on $A$ over $C$, $\Psi$ the induced $D_{I \mkern-1.9mu D^{}}$-module algebra structure and $ \rho_{} $ the associated homomorphism of $C$-algebras, then the composition $A \overset{ \rho_{} }{\to} {}_{C}\mkern-1mu\mathcal{M}(D_{I \mkern-1.9mu D^{}},A) \overset{\sim}{\rightarrow} A\llbracket w \rrbracket$ is given by $a \mapsto \sum_{k \in {\mathbb N}} \theta_{}^{(k)}(a) w^k$.
	We will identify an iterative derivation with this homomorphism as in \cite{Heiderich:2007}.
	\item If $B$ is a commutative $A$-algebra via $A \overset{g}{\to} B$, the $C$-algebra ${}_{C}\mkern-1mu\mathcal{M}(D_{H \mkern-1.9mu D_{(m)}^{}}, B)$ is isomorphic to $B\llbracket w \rrbracket/(w^{m+1})$ and if $(\theta_{}^{(k)})_{k=0, \dots, m}$ is a higher derivation of length $m$ from $A$ to $B$ over $C$, $\Psi$ is the induced $D_{H \mkern-1.9mu D_{(m)}^{}}$-measuring and $ \rho_{} $ the associated homomorphism, then the composition $ \rho_{}  \colon A \to {}_{C}\mkern-1mu\mathcal{M}(D_{H \mkern-1.9mu D_{(m)}^{}},B) \overset{\sim}{\rightarrow} B\llbracket w \rrbracket/(w^{m+1})$ is given by $a \mapsto \sum_{k=0}^m \theta_{}^{(k)}(a) w^k + (w^{m+1})$.
	\item \label{ex_duals_and_GTHs_trivial_MAS} The homomorphism $ \rho_{0}  \colon A \to {}_{C}\mkern-1mu\mathcal{M}(D,A)$ associated to the trivial $D$-module algebra structure ${\Psi_{0}}$ is given by $ \rho_{0} (a)(d) = \varepsilon_{D}(d)a$ for all $a \in A$ and $d \in D$.
\end{enumerate}
\end{ex}

If $D_1$ and $D_2$ are $C$-bialgebras, then $D_1 \otimes_C D_2$ becomes a $C$-bialgebra in a natural way.
A pair of commuting $D_1$- and $D_2$-module algebra structures on $A$ gives rise to a $D_1 \otimes_C D_2$-module algebra structure on $A$ and vice versa.

For every $n \in {\mathbb N}$ the tensor product $D_{I \mkern-1.9mu D^{n}} \coloneqq {D_{I \mkern-1.9mu D^{}}}^{\otimes_{C}^{n}}$
is a cocommutative Hopf algebra and $D_{I \mkern-1.9mu D^{n}}$-module algebra structures correspond to systems of $n$ commuting iterative derivations, which we call \emph{$n$-variate iterative derivations} (cf. \cite{Heiderich:2007} or \cite{Maurischat:2010b}).
Furthermore, $D_{I \mkern-1.9mu D^{n}}$-measures from $R$ to itself such that $1_{D_{I \mkern-1.9mu D^{n}}}.a = a$ for all $a \in R$ correspond 1-1 to $n$-variate higher derivations on $R$ in the sense of \cite{Heiderich:2007}.
Similarly, we define for $n \in {\mathbb N}$ and ${\bm{m}} = (m_1, \dots, m_n) \in {\mathbb N}^n$ the $C$-coalgebra $D_{H \mkern-1.9mu D_{({\bm{m}})}^{n}} \coloneqq D_{H \mkern-1.9mu D_{(m_1)}^{}} \otimes_{C} \cdots \otimes_{C} D_{H \mkern-1.9mu D_{(m_n)}^{}}$ and note that $D_{I \mkern-1.9mu D^{n}}$ is isomorphic to $\varinjlim_{{\bm{m}} \in {\mathbb N}^{n}} D_{H \mkern-1.9mu D_{({\bm{m}})}^{n}}$ as $C$-coalgebra.

\begin{notation}
	For $C$-bialgebras $D_{1}$ and $D_{2}$ and $C$-algebras $A$ we often make implicitly use of the isomorphism of $C$-algebras
	\begin{equation}
	\label{eqn_implicit_isom}
		{}_{C}\mkern-1mu\mathcal{M}(D_{2}, {}_{C}\mkern-1mu\mathcal{M}(D_{1},A))
			\cong {}_{C}\mkern-1mu\mathcal{M}(D_{1} \otimes_C D_{2},A)
			\cong {}_{C}\mkern-1mu\mathcal{M}(D_{1}, {}_{C}\mkern-1mu\mathcal{M}(D_{2},A)).
	\end{equation}
\end{notation}

\begin{lem}
\label{lem_internal_D-module_algebra_structure_on_ModCDA}
Let $D$ be a $C$-bialgebra and $A$ a be $C$-algebra.
\begin{enumerate}
	\item	\label{itm_lem_internal_D-module_algebra_structure_on_ModCDA_def}
	The $C$-algebra ${}_{C}\mkern-1mu\mathcal{M}(D,A)$ becomes a $D$-module algebra by the homomorphism of $C$-modules
	\[{\Psi_{\mkern-0.7mu int}} \colon D \otimes_C {}_{C}\mkern-1mu\mathcal{M}(D,A) \to {}_{C}\mkern-1mu\mathcal{M}(D,A)\]
	\index{${\Psi_{\mkern-0.7mu int}}$}that sends $d \otimes f \in D \otimes_C {}_{C}\mkern-1mu\mathcal{M}(D,A)$ to the homomorphism of $C$-modules
	\[ {\Psi_{\mkern-0.7mu int}}(d \otimes f) \colon D \to A, \quad \tilde{d} \mapsto f(\tilde{d} d) \quad \text{ for all } \tilde{d} \in D. \]
	For any homomorphism of $C$-algebras $\varphi \colon A \to B$ the induced homomorphism of $C$-algebras
	\[ {}_{C}\mkern-1mu\mathcal{M}(D,\varphi) \colon {}_{C}\mkern-1mu\mathcal{M}(D,A) \to {}_{C}\mkern-1mu\mathcal{M}(D,B) \]
	is a homomorphism of $D$-module algebras with respect to the $D$-module algebra structures on ${}_{C}\mkern-1mu\mathcal{M}(D,A)$ and ${}_{C}\mkern-1mu\mathcal{M}(D,B)$ given by ${\Psi_{\mkern-0.7mu int}}$.
	Thus, ${}_{C}\mkern-1mu\mathcal{M}(D, -)$ is a functor from the category of $C$-algebras to the category of $D$-module algebras.

	\item \label{itm_lem_internal_D-module_algebra_structure_on_ModCDA_const} The constants ${}_{C}\mkern-1mu\mathcal{M}(D,A)^{{\Psi_{\mkern-0.7mu int}}}$ are equal to $ \rho_{0} (A)$, where $ \rho_{0}  \colon A \to {}_{C}\mkern-1mu\mathcal{M}(D,A)$ is the homomorphism associated to the trivial $D$-module algebra structure ${\Psi_{0}}$ on $A$ (cf. example~\ref{ex_duals_and_GTHs} \ref{ex_duals_and_GTHs_trivial_MAS}).

	\item \label{itm_lem_internal_D-module_algebra_structure_on_ModCDA_another_MAS} If $D'$ is another $C$-bialgebra and $\Psi'$ is a $D'$-module algebra structure on $A$ with associated homomorphism $ \rho_{} '$, then $\Psi'$ induces a $D'$-module algebra structure on ${}_{C}\mkern-1mu\mathcal{M}(D,A)$ with associated homomorphism given by
	\begin{equation}
		\label{eqn_module_algebra_structure_on_images}
		{}_{C}\mkern-1mu\mathcal{M}(D,  \rho_{} ') \colon {}_{C}\mkern-1mu\mathcal{M}(D,A) \to {}_{C}\mkern-1mu\mathcal{M}(D, {}_{C}\mkern-1mu\mathcal{M}(D',A)) \cong {}_{C}\mkern-1mu\mathcal{M}(D', {}_{C}\mkern-1mu\mathcal{M}(D,A)).
	\end{equation}
	This $D'$-module algebra structure commutes with the $D$-module algebra structure ${\Psi_{\mkern-0.7mu int}}$ on ${}_{C}\mkern-1mu\mathcal{M}(D,A)$ and thus ${}_{C}\mkern-1mu\mathcal{M}(D,A)$ becomes a $D \otimes_C D'$-module algebra.
\end{enumerate}
\end{lem}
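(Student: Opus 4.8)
The plan is to treat the three parts in turn, in each case reducing everything to the explicit formula $({\Psi_{\mkern-0.7mu int}}(d \otimes f))(\tilde d) = f(\tilde d d)$ -- which I abbreviate $d.f$ -- and to the characterizations already established in Proposition~\ref{prop_characterization_of_measuring}, Lemma~\ref{lem:equivalent_defining_conditions_for_module_algebras} and Lemma~\ref{lem:equivalent_defining_conditions_morphisms_compatible_with_measurings}. For part~\ref{itm_lem_internal_D-module_algebra_structure_on_ModCDA_def} I would first check that ${\Psi_{\mkern-0.7mu int}}$ measures ${}_{C}\mkern-1mu\mathcal{M}(D,A)$ to itself by verifying condition~\ref{eqn_prop_characterization_of_measuring_concrete} of Proposition~\ref{prop_characterization_of_measuring}, then that it is a module-algebra structure by verifying the two diagrams of Lemma~\ref{lem:equivalent_defining_conditions_for_module_algebras}, and finally functoriality via Lemma~\ref{lem:equivalent_defining_conditions_morphisms_compatible_with_measurings}.

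The crux of part~\ref{itm_lem_internal_D-module_algebra_structure_on_ModCDA_def} is the multiplicativity of ${\Psi_{\mkern-0.7mu int}}$, and this is exactly where the bialgebra axioms enter. Evaluating the left-hand side of~\ref{eqn_prop_characterization_of_measuring_concrete}(a) at $\tilde d$ gives $(d.(f \cdot g))(\tilde d) = (f \cdot g)(\tilde d d) = \sum_{(\tilde d d)} f((\tilde d d)_{(1)}) g((\tilde d d)_{(2)})$; since $\Delta_D$ is a homomorphism of $C$-algebras one has $\Delta_D(\tilde d d) = \sum_{(\tilde d),(d)} \tilde d_{(1)} d_{(1)} \otimes \tilde d_{(2)} d_{(2)}$, and the outcome matches $\big(\sum_{(d)} (d_{(1)}.f)(d_{(2)}.g)\big)(\tilde d)$ computed with the convolution product. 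Condition~\ref{eqn_prop_characterization_of_measuring_concrete}(b) follows from $\varepsilon_D$ being multiplicative. The module axioms are then immediate from the formula: $1_D.f = f$ because $1_D$ is a unit of $D$, and $(d d').f = d.(d'.f)$ because multiplication in $D$ is associative, in both cases after evaluating at an arbitrary $\tilde d$. For functoriality, evaluating shows $({}_{C}\mkern-1mu\mathcal{M}(D,\varphi)(d.f))(\tilde d) = \varphi(f(\tilde d d)) = (d.({}_{C}\mkern-1mu\mathcal{M}(D,\varphi)(f)))(\tilde d)$, so the compatibility condition of Lemma~\ref{lem:equivalent_defining_conditions_morphisms_compatible_with_measurings} holds; as ${}_{C}\mkern-1mu\mathcal{M}(D,\varphi)$ is a homomorphism of convolution algebras, this yields the asserted functor.

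Part~\ref{itm_lem_internal_D-module_algebra_structure_on_ModCDA_const} is a short direct computation. The defining condition $d.f = \varepsilon_D(d) f$ for all $d$ reads $f(\tilde d d) = \varepsilon_D(d) f(\tilde d)$ for all $\tilde d, d \in D$. Taking $\tilde d = 1_D$ gives $f(d) = \varepsilon_D(d) f(1_D)$, i.e. $f = \rho_0(f(1_D)) \in \rho_0(A)$ by the description of $\rho_0$ in Example~\ref{ex_duals_and_GTHs} \ref{ex_duals_and_GTHs_trivial_MAS}. Conversely, for $f = \rho_0(a)$ one has $f(\tilde d d) = \varepsilon_D(\tilde d d) a = \varepsilon_D(d) \varepsilon_D(\tilde d) a = \varepsilon_D(d) f(\tilde d)$, so $\rho_0(A) \subseteq {}_{C}\mkern-1mu\mathcal{M}(D,A)^{{\Psi_{\mkern-0.7mu int}}}$, giving equality.

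For part~\ref{itm_lem_internal_D-module_algebra_structure_on_ModCDA_another_MAS} the key observation is that the homomorphism~\eqref{eqn_module_algebra_structure_on_images}, read through the isomorphism~\eqref{eqn_implicit_isom} with $D_1 = D'$ and $D_2 = D$, corresponds to the pointwise action $(d'.f)(d) = d'.(f(d))$ for $d' \in D'$, $d \in D$ and $f \in {}_{C}\mkern-1mu\mathcal{M}(D,A)$, where $d'.(f(d))$ uses the given structure $\Psi'$ on $A$. Since $\rho'$ is a homomorphism of $C$-algebras, ${}_{C}\mkern-1mu\mathcal{M}(D,-)$ is a functor to $D$-module algebras by part~\ref{itm_lem_internal_D-module_algebra_structure_on_ModCDA_def}, and~\eqref{eqn_implicit_isom} is an isomorphism of $C$-algebras, the composite~\eqref{eqn_module_algebra_structure_on_images} is a homomorphism of $C$-algebras, hence a measuring; the module axioms $1_{D'}.f = f$ and $(d'e').f = d'.(e'.f)$ are read off pointwise from the corresponding axioms for $\Psi'$. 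Finally I would check that the two structures commute: evaluating at $\tilde d \in D$, both $(d.(d'.f))(\tilde d)$ and $(d'.(d.f))(\tilde d)$ equal $d'.(f(\tilde d d))$, so the $D$- and $D'$-actions commute and ${}_{C}\mkern-1mu\mathcal{M}(D,A)$ acquires a $D \otimes_C D'$-module algebra structure by the remark preceding the lemma. The one point requiring care throughout part~\ref{itm_lem_internal_D-module_algebra_structure_on_ModCDA_another_MAS} is the bookkeeping with the natural isomorphism~\eqref{eqn_implicit_isom}; once the pointwise description is in hand, every remaining verification reduces to the already-established properties of $\Psi'$ and ${\Psi_{\mkern-0.7mu int}}$.
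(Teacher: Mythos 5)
Your proposal is correct and follows essentially the same route as the paper: both arguments reduce everything to the bialgebra compatibility of $m_D$ and $\Delta_D$, the constancy computation in part~\ref{itm_lem_internal_D-module_algebra_structure_on_ModCDA_const} is identical, and the commutation check in part~\ref{itm_lem_internal_D-module_algebra_structure_on_ModCDA_another_MAS} is the same one-line evaluation at $\tilde d$. The only difference is presentational: where you verify the measuring identity elementwise in Sweedler notation, the paper observes that $ \rho_{int} $ corresponds to ${}_{C}\mkern-1mu\mathcal{M}(m_D,A)$ under \eqref{eqn_implicit_isom} and invokes that $m_D$ is a homomorphism of $C$-coalgebras, which is the same fact stated diagrammatically.
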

\begin{proof}
	We note that the homomorphism of $C$-modules\index{$ \rho_{int} $}
	\[  \rho_{int}  \colon {}_{C}\mkern-1mu\mathcal{M}(D,A) \to {}_{C}\mkern-1mu\mathcal{M}(D, {}_{C}\mkern-1mu\mathcal{M}(D,A)), \]
	associated to ${\Psi_{\mkern-0.7mu int}}$ via the isomorphism~\eqref{eqn:fundamental_isomorphism}, corresponds to
	\[ {}_{C}\mkern-1mu\mathcal{M}(m_D, A) \colon {}_{C}\mkern-1mu\mathcal{M}(D,A) \to {}_{C}\mkern-1mu\mathcal{M}(D \otimes_C D, A) \]
	under the isomorphism of $C$-algebras~\eqref{eqn_implicit_isom}.
	Since $m_D$ is a homomorphism of $C$-coalgebras, ${}_{C}\mkern-1mu\mathcal{M}(m_D, A)$ and therefore also $ \rho_{int} $ are homomorphisms of $C$-algebras.
	The diagram
	\begin{align*}
		\xymatrix{
			{}_{C}\mkern-1mu\mathcal{M}(D,A) \ar[r]^-{ \rho_{int} } \ar[rd]_{\id} & {}_{C}\mkern-1mu\mathcal{M}(D, {}_{C}\mkern-1mu\mathcal{M}(D, A)) \ar[d]^{\operatorname{ev}_{1_D}} \\
			& {}_{C}\mkern-1mu\mathcal{M}(D,A)
		}
	\end{align*}
	obviously commutes.
	Using again the isomorphism~\eqref{eqn_implicit_isom},
	the commutativity of the diagram
	\begin{align*}
		\xymatrixcolsep{8em}
		\xymatrix{
			{}_{C}\mkern-1mu\mathcal{M}(D,A) \ar[r]^-{ \rho_{int} } \ar[d]^{ \rho_{int} } & {}_{C}\mkern-1mu\mathcal{M}(D,{}_{C}\mkern-1mu\mathcal{M}(D,A)) \ar[d]^{{}_{C}\mkern-1mu\mathcal{M}(D,  \rho_{int} )} \\
			{}_{C}\mkern-1mu\mathcal{M}(D,{}_{C}\mkern-1mu\mathcal{M}(D,A)) \ar[r]^-{{}_{C}\mkern-1mu\mathcal{M}(m_D, {}_{C}\mkern-1mu\mathcal{M}(D,A))} & {}_{C}\mkern-1mu\mathcal{M}(D \otimes_C D, {}_{C}\mkern-1mu\mathcal{M}(D,A))
		}
	\end{align*}
	follows from the associativity of $D$.
	Therefore, ${\Psi_{\mkern-0.7mu int}}$ is in fact a $D$-module algebra structure on ${}_{C}\mkern-1mu\mathcal{M}(D,A)$.
	For a homomorphism of $C$-algebras $\varphi \colon A \to B$, the big rectangle and the square on the right in the diagram
	\begin{align*}
		\xymatrixcolsep{3.2em}
		\xymatrix{
			{}_{C}\mkern-1mu\mathcal{M}(\mkern-1mu D, \mkern-1mu A) \ar[r]^-{ \rho_{int} } \ar[d]^{{}_{C}\mkern-1mu\mathcal{M}(\mkern-1mu D,\varphi)} \ar@/^1.7em/[rr]^{{}_{C}\mkern-1mu\mathcal{M}(m_D, A)} & {}_{C}\mkern-1mu\mathcal{M}(\mkern-1mu D, {}_{C}\mkern-1mu\mathcal{M}(\mkern-1mu D, \mkern-1mu A)) \ar[d]^{{}_{C}\mkern-1mu\mathcal{M}(\mkern-1mu D, {}_{C}\mkern-1mu\mathcal{M}(\mkern-1mu D,\varphi))} \ar[r]^-{\sim} & {}_{C}\mkern-1mu\mathcal{M}(\mkern-1mu D \mkern-1mu \otimes_C \mkern-2mu D, \mkern-1mu A) \ar[d]^{{}_{C}\mkern-1mu\mathcal{M}(\mkern-1mu D \otimes_C D,\varphi)} \\
			{}_{C}\mkern-1mu\mathcal{M}(\mkern-1mu D, \mkern-1mu B) \ar[r]^-{ \rho_{int} } \ar@/_1.7em/[rr]^{{}_{C}\mkern-1mu\mathcal{M}(m_D, B)} & {}_{C}\mkern-1mu\mathcal{M}(\mkern-1mu D, {}_{C}\mkern-1mu\mathcal{M}(\mkern-1mu D, \mkern-1mu B)) \ar[r]^-{\sim} & {}_{C}\mkern-1mu\mathcal{M}(\mkern-1mu D \mkern-1mu \otimes_C \mkern-2mu D, \mkern-1mu B) \\
		}
	\end{align*}
	commute and thus the rectangle on the left commutes too, i.e. ${}_{C}\mkern-1mu\mathcal{M}(D, \varphi)$ is a homomorphism of $D$-module algebras with respect to the $D$-module algebra structures given by ${\Psi_{\mkern-0.7mu int}}$ on ${}_{C}\mkern-1mu\mathcal{M}(D,A)$ and ${}_{C}\mkern-1mu\mathcal{M}(D,B)$.

	To prove part~\ref{itm_lem_internal_D-module_algebra_structure_on_ModCDA_const}, let $f \in {}_{C}\mkern-1mu\mathcal{M}(D,A)$.
Then $f$ is constant with respect to ${\Psi_{\mkern-0.7mu int}}$ if and only if $f(d) = ({\Psi_{\mkern-0.7mu int}}(d \otimes f))(1) = \varepsilon_D(d) f(1)$ for all $d \in D$, i.e. if and only if $f =  \rho_{0} (f(1))$.

	It is clear that ${}_{C}\mkern-1mu\mathcal{M}(D,  \rho_{} {'})$ induces a $D'$-module algebra structure on ${}_{C}\mkern-1mu\mathcal{M}(D,A)$, which we denote again by $\Psi'$.
Then we have for all $f \in {}_{C}\mkern-1mu\mathcal{M}(D,A), d, \tilde{d} \in D$ and $d' \in D'$
	\[
	\Psi'(d' \otimes {\Psi_{\mkern-0.7mu int}}(d \otimes f) )(\tilde{d})
	= \Psi'(d' \otimes f(\tilde{d} d) )
	= {\Psi_{\mkern-0.7mu int}}(d \otimes \Psi'(d' \otimes f)(\tilde{d}) ).
	\]
\end{proof}

\begin{lem}
	Given a $C$-bialgebra $D$, a $D$-module algebra structure $\Psi \in {}_{C}\mkern-1mu\mathcal{M}(D \otimes_C A, A)$ on $A$, the associated homomorphism $ \rho_{}  \colon A \to {}_{C}\mkern-1mu\mathcal{M}(D, A)$ is a homomorphism of $D$-module algebras from $(A,\Psi)$ to $({}_{C}\mkern-1mu\mathcal{M}(D,A), {\Psi_{\mkern-0.7mu int}})$.
	The homomorphism $ \rho_{} $ is universal among all homomorphisms of $D$-module algebras $\Lambda \colon (A,\Psi) \to ({}_{C}\mkern-1mu\mathcal{M}(D,B), {\Psi_{\mkern-0.7mu int}})$, where $B$ is a $C$-algebra, in the sense that for every such $\Lambda$ there exists a unique homomorphism of $C$-algebras $\lambda \colon A \to B$ such that $\Lambda = {}_{C}\mkern-1mu\mathcal{M}(D,\lambda) \circ  \rho_{} $.
	\label{lem_generalized_universal_Taylor_morphism_is_a_D-module_homomorphism}
\end{lem}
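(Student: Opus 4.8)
The plan is to establish the two assertions separately, reducing each to the defining diagrams recorded in Lemma~\ref{lem:equivalent_defining_conditions_for_module_algebras} and Lemma~\ref{lem:equivalent_defining_conditions_morphisms_compatible_with_measurings}; in both cases the substance is already contained in the $D$-module algebra axioms for $\Psi$, so the work is purely diagrammatic.

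For the first claim, note that $\rho$ is a homomorphism of $C$-algebras by Proposition~\ref{prop_characterization_of_measuring}, since $\Psi$ measures $A$ to $A$. By Lemma~\ref{lem:equivalent_defining_conditions_morphisms_compatible_with_measurings}, applied with $\varphi_A = \varphi_B = \rho$ and with the homomorphisms $\rho$ and $ \rho_{int} $ attached to $\Psi$ and ${\Psi_{\mkern-0.7mu int}}$, it then suffices to check that the square
\[
\xymatrix{
A \ar[r]^-{\rho} \ar[d]_{\rho} & {}_{C}\mkern-1mu\mathcal{M}(D,A) \ar[d]^{{}_{C}\mkern-1mu\mathcal{M}(D,\rho)} \\
{}_{C}\mkern-1mu\mathcal{M}(D,A) \ar[r]^-{ \rho_{int} } & {}_{C}\mkern-1mu\mathcal{M}(D,{}_{C}\mkern-1mu\mathcal{M}(D,A))
}
\]
commutes. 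But $ \rho_{int} $ corresponds under~\eqref{eqn_implicit_isom} to ${}_{C}\mkern-1mu\mathcal{M}(m_D,A)$, as was observed in the proof of Lemma~\ref{lem_internal_D-module_algebra_structure_on_ModCDA}, so this square is precisely the first diagram of Lemma~\ref{lem:equivalent_defining_conditions_for_module_algebras}, i.e.\ the associativity axiom for the $D$-module algebra structure $\Psi$, which holds by hypothesis. On elements this is just the identity $\tilde d.(d.a) = (\tilde d d).a$.

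For the universal property I would first pin down the only possible candidate for $\lambda$. If $\Lambda = {}_{C}\mkern-1mu\mathcal{M}(D,\lambda) \circ \rho$, then composing with $\operatorname{ev}_{1_D}$ and using both $\operatorname{ev}_{1_D} \circ {}_{C}\mkern-1mu\mathcal{M}(D,\lambda) = \lambda \circ \operatorname{ev}_{1_D}$ and the counit axiom $\operatorname{ev}_{1_D} \circ \rho = \id_A$ (the second diagram of Lemma~\ref{lem:equivalent_defining_conditions_for_module_algebras}) forces $\lambda = \operatorname{ev}_{1_D} \circ \Lambda$; this already yields uniqueness and tells us how to define $\lambda$. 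Taking $\lambda := \operatorname{ev}_{1_D} \circ \Lambda$, it is a homomorphism of $C$-algebras, because $\Lambda$ is one and $\operatorname{ev}_{1_D}$ is one: the relations $\Delta_D(1_D) = 1_D \otimes 1_D$ and $\varepsilon_D(1_D) = 1_C$ valid in a bialgebra make evaluation at $1_D$ multiplicative and unital for the convolution product.

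It then remains to verify $\Lambda = {}_{C}\mkern-1mu\mathcal{M}(D,\lambda) \circ \rho$, and this is where the compatibility of $\Lambda$ with the measurings $\Psi$ and ${\Psi_{\mkern-0.7mu int}}$ does the work. For $a \in A$ and $d \in D$ one computes
\[
({}_{C}\mkern-1mu\mathcal{M}(D,\lambda) \circ \rho)(a)(d) = \lambda\bigl(\rho(a)(d)\bigr) = \lambda(d.a) = \Lambda(d.a)(1_D),
\]
while compatibility gives $\Lambda(d.a) = {\Psi_{\mkern-0.7mu int}}(d \otimes \Lambda(a))$, whose value at $1_D$ is $\Lambda(a)(1_D\, d) = \Lambda(a)(d)$. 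Hence the two functions in ${}_{C}\mkern-1mu\mathcal{M}(D,B)$ agree on every $d$, so $\Lambda = {}_{C}\mkern-1mu\mathcal{M}(D,\lambda) \circ \rho$, as required. I do not expect a genuine obstacle; the only point demanding care is the bookkeeping of the two distinct roles of $1_D$—once as the argument of $\operatorname{ev}_{1_D}$ and once as the element through which ${\Psi_{\mkern-0.7mu int}}$ reindexes—together with the verification that $\operatorname{ev}_{1_D}$ is genuinely an algebra homomorphism and not merely $C$-linear.
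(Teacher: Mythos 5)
Your proposal is correct and follows essentially the same route as the paper: the first claim is reduced to the associativity diagram of Lemma~\ref{lem:equivalent_defining_conditions_for_module_algebras} via the identification of $ \rho_{int} $ with ${}_{C}\mkern-1mu\mathcal{M}(m_D,A)$, and the universal property is established by setting $\lambda = \operatorname{ev}_{1_D} \circ \Lambda$. The only differences are cosmetic — you verify $\Lambda = {}_{C}\mkern-1mu\mathcal{M}(D,\lambda)\circ\rho$ element-wise where the paper chains the identities ${}_{C}\mkern-1mu\mathcal{M}(D,\operatorname{ev}_{1_D})\circ\rho_{int} = \id$ and ${}_{C}\mkern-1mu\mathcal{M}(D,\Lambda)\circ\rho = \rho_{int}\circ\Lambda$, and you spell out the uniqueness of $\lambda$ and the multiplicativity of $\operatorname{ev}_{1_D}$, which the paper leaves implicit.
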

\begin{proof}
	Since ${}_{C}\mkern-1mu\mathcal{M}(D,  \rho_{} ) \circ  \rho_{}  = {}_{C}\mkern-1mu\mathcal{M}(m_D,A) \circ  \rho_{} $ and since the homomorphism of $C$-algebras $ \rho_{int} $ associated to ${\Psi_{\mkern-0.7mu int}}$ corresponds to ${}_{C}\mkern-1mu\mathcal{M}(m_D,A)$ under the isomorphism of $C$-algebras~\eqref{eqn_implicit_isom}, we see that $ \rho_{} $ is in fact a $D$-module algebra homomorphism from $(\mkern-0.5mu A, \mkern-0.5mu \Psi)$ to $({}_{C}\mkern-1mu\mathcal{M}\mkern-0.5mu (D,A), \mkern-0.5mu {\Psi_{\mkern-0.7mu int}})$.
	To show the universality of $ \rho_{} $, let $\Lambda \colon (A,\Psi) \to ({}_{C}\mkern-1mu\mathcal{M}(D,B), {\Psi_{\mkern-0.7mu int}})$ be a homomorphism of $D$-module algebras.
	We define $\lambda \coloneqq \operatorname{ev}_{1_D} \mkern-2mu \circ \Lambda$ and obtain
	${}_{C}\mkern-1mu\mathcal{M}(D,\lambda) \circ  \rho_{}
		= {}_{C}\mkern-1mu\mathcal{M}(D,\operatorname{ev}_{1_D}) \circ {}_{C}\mkern-1mu\mathcal{M}(D,\Lambda) \circ  \rho_{}
		= {}_{C}\mkern-1mu\mathcal{M}(D,\operatorname{ev}_{1_D}) \circ  \rho_{int}  \circ \Lambda
		= \Lambda$.
\end{proof}
For $D=D_{der}$ and $D=D_{end}$ one recoveres as corollaries
\cite[Proposition~1.4]{Umemura:1996b} and \cite[Propositions~2.5 and 2.7]{Morikawa:2009}.

\subsection{Extensions of module algebra structures}

\begin{prop}
	Let $D$ be a cocommutative $C$-bialgebra,
	\[(S, \Psi_S) \leftarrow (R, \Psi_R) \to (T, \Psi_T)\]
	be a diagram in the category of commutative $D$-module algebras and $ \rho_{R} $, $ \rho_{S} $ and $ \rho_{T} $ the homomorphisms associated to $\Psi_{R}$, $\Psi_{S}$ and $\Psi_{T}$, respectively.
	Then $S \otimes_R T$ carries a unique $D$-module algebra structure $\Psi$ such that $(S \otimes_R T, \Psi)$ becomes the coproduct of $(S, \Psi_S)$ and $(T, \Psi_T)$ over $(R, \Psi_R)$ in the category of commutative $D$-module algebras.
		This $D$-module algebra structure on $S \otimes_R T$ is given as
		\begin{equation}
			\Psi \colon D \otimes_C S \otimes_R T \to S \otimes_R T, \quad \quad d \otimes s \otimes t \mapsto \sum_{(d)} \mkern-2mu \Psi_S( d_{(1)} \otimes s) \otimes \Psi_T(d_{(2)} \otimes t)
			\label{eqn_D-mod_alg_structure_on_tensor_product_formula}
		\end{equation}
		and $\Psi$ corresponds to $ \rho_{S}  \otimes  \rho_{T} $ under the isomorphism \eqref{eqn:fundamental_isomorphism} when we identify ${}_{C}\mkern-1mu\mathcal{M}(D,S) \otimes_{{}_{C}\mkern-1mu\mathcal{M}(D,R)} {}_{C}\mkern-1mu\mathcal{M}(D,T)$ with ${}_{C}\mkern-1mu\mathcal{M}(D, S \otimes_R T)$.
	\label{prop_D-mod_alg_structure_on_tensor_product}
\end{prop}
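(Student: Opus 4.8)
The plan is to write the structure morphism down directly and reduce every verification to the corresponding property of $\rho_S$ and $\rho_T$ via the universal property of $S \otimes_R T$ as a pushout of commutative $C$-algebras. Since $D$ is cocommutative, lemma~\ref{lem_algebra_structure_on_Mod_C_D_B} guarantees that ${}_{C}\mkern-1mu\mathcal{M}(D,B)$ is commutative for every commutative $C$-algebra $B$; in particular ${}_{C}\mkern-1mu\mathcal{M}(D,R)$, ${}_{C}\mkern-1mu\mathcal{M}(D,S)$ and ${}_{C}\mkern-1mu\mathcal{M}(D,T)$ are commutative, so the pushout ${}_{C}\mkern-1mu\mathcal{M}(D,S) \otimes_{{}_{C}\mkern-1mu\mathcal{M}(D,R)} {}_{C}\mkern-1mu\mathcal{M}(D,T)$ makes sense in $\mathsf{CAlg}_C$. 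Because the structure maps $R \to S$ and $R \to T$ are homomorphisms of $D$-module algebras, lemma~\ref{lem:equivalent_defining_conditions_morphisms_compatible_with_measurings} shows that $\rho_S$ and $\rho_T$ are compatible with $\rho_R$; hence the universal property of the pushout $S \otimes_R T$ produces a homomorphism $\rho_S \otimes \rho_T \colon S \otimes_R T \to {}_{C}\mkern-1mu\mathcal{M}(D,S) \otimes_{{}_{C}\mkern-1mu\mathcal{M}(D,R)} {}_{C}\mkern-1mu\mathcal{M}(D,T)$. Composing it with the natural homomorphism $\nu$ to ${}_{C}\mkern-1mu\mathcal{M}(D, S \otimes_R T)$ induced functorially by $\iota_S \colon S \to S \otimes_R T$ and $\iota_T \colon T \to S \otimes_R T$ gives a homomorphism of $C$-algebras $\rho \colon S \otimes_R T \to {}_{C}\mkern-1mu\mathcal{M}(D, S \otimes_R T)$, and I define $\Psi$ to be its image under~\eqref{eqn:fundamental_isomorphism}. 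This $\rho$ is what is meant by $\rho_S \otimes \rho_T$ after the identification in the statement, $\nu$ being the identifying map.

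First I would verify formula~\eqref{eqn_D-mod_alg_structure_on_tensor_product_formula}. Unwinding $\nu$ on a simple tensor, $\nu(\rho_S(s) \otimes \rho_T(t))$ is the convolution product of ${}_{C}\mkern-1mu\mathcal{M}(D,\iota_S)(\rho_S(s))$ and ${}_{C}\mkern-1mu\mathcal{M}(D,\iota_T)(\rho_T(t))$, so evaluating at $d$ gives $\sum_{(d)} \rho_S(s)(d_{(1)}) \otimes \rho_T(t)(d_{(2)}) = \sum_{(d)} \Psi_S(d_{(1)} \otimes s) \otimes \Psi_T(d_{(2)} \otimes t)$, which is exactly~\eqref{eqn_D-mod_alg_structure_on_tensor_product_formula}. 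Specialising to $t = 1_T$ and using $\Psi_T(d \otimes 1_T) = \varepsilon_D(d) 1_T$ together with the counit axiom collapses the sum and yields $\rho \circ \iota_S = {}_{C}\mkern-1mu\mathcal{M}(D,\iota_S) \circ \rho_S$; symmetrically $\rho \circ \iota_T = {}_{C}\mkern-1mu\mathcal{M}(D,\iota_T) \circ \rho_T$. By lemma~\ref{lem:equivalent_defining_conditions_morphisms_compatible_with_measurings} these two identities say precisely that $\iota_S$ and $\iota_T$ are homomorphisms of $D$-module algebras, once $\Psi$ is known to be a $D$-module algebra structure.

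Next I would check that $\Psi$ is a $D$-module algebra structure by verifying the two diagrams of lemma~\ref{lem:equivalent_defining_conditions_for_module_algebras}. The counit diagram $\operatorname{ev}_{1_D} \circ \rho = \id$ is immediate from the formula and the counit property of $\rho_S$ and $\rho_T$. For the coassociativity diagram both composites $S \otimes_R T \to {}_{C}\mkern-1mu\mathcal{M}(D \otimes_C D, S \otimes_R T)$ are homomorphisms of $C$-algebras, so by the pushout property it suffices to prove they agree after precomposition with $\iota_S$ and with $\iota_T$. Precomposing with $\iota_S$ and substituting the relation $\rho \circ \iota_S = {}_{C}\mkern-1mu\mathcal{M}(D,\iota_S) \circ \rho_S$ from the previous step, naturality of ${}_{C}\mkern-1mu\mathcal{M}(m_D,-)$ and of the isomorphism~\eqref{eqn_implicit_isom} reduce the required equality to the coassociativity diagram for $\rho_S$, which holds because $\Psi_S$ is a $D$-module algebra structure; the argument for $\iota_T$ is identical. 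This bookkeeping---matching the two composites after transport through ${}_{C}\mkern-1mu\mathcal{M}(D \otimes_C D, -) \cong {}_{C}\mkern-1mu\mathcal{M}(D, {}_{C}\mkern-1mu\mathcal{M}(D,-))$---is the step I expect to be the most delicate.

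Finally I would establish the universal property and uniqueness. Given a commutative $D$-module algebra $(U, \Psi_U)$ and homomorphisms of $D$-module algebras $f \colon (S,\Psi_S) \to (U,\Psi_U)$ and $g \colon (T,\Psi_T) \to (U,\Psi_U)$ that agree on $R$, the pushout in $\mathsf{CAlg}_C$ yields a unique $C$-algebra homomorphism $h \colon S \otimes_R T \to U$ with $h \circ \iota_S = f$ and $h \circ \iota_T = g$. To see that $h$ is a homomorphism of $D$-module algebras I would check the compatibility square of lemma~\ref{lem:equivalent_defining_conditions_morphisms_compatible_with_measurings}: both $\rho_U \circ h$ and ${}_{C}\mkern-1mu\mathcal{M}(D,h) \circ \rho$ are $C$-algebra homomorphisms out of the pushout, and after precomposing with $\iota_S$ (resp. $\iota_T$) they agree because $f$ (resp. $g$) is a homomorphism of $D$-module algebras. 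Uniqueness of $\Psi$ follows by the same generation argument: any $D$-module algebra structure on $S \otimes_R T$ making $\iota_S$ and $\iota_T$ into homomorphisms of $D$-module algebras forces $\rho' \circ \iota_S = {}_{C}\mkern-1mu\mathcal{M}(D,\iota_S) \circ \rho_S$ and the analogue for $T$, which determines the $C$-algebra homomorphism $\rho'$ on the generating set $\iota_S(S) \cup \iota_T(T)$ of $S \otimes_R T$ and hence forces $\rho' = \rho$.
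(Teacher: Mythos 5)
Your proposal is correct and follows essentially the same route as the paper: both construct $\rho$ via the universal property of the pushout $S \otimes_R T$ in ${\mathsf{CAlg}_{C}}$ (using cocommutativity of $D$ to ensure the convolution algebras are commutative), verify the module-algebra axioms by reducing to $\rho_S$ and $\rho_T$ through that same universal property, and obtain the coproduct property and the explicit formula~\eqref{eqn_D-mod_alg_structure_on_tensor_product_formula} as consequences. The only cosmetic difference is that you derive the formula directly by unwinding the convolution product, whereas the paper deduces it afterwards from the fact that $\iota_S$, $\iota_T$ are $D$-module algebra homomorphisms together with the measuring property of $\Psi$.
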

\begin{proof}
	Since $D$ is cocommutative and $R$, $S$ and $T$ are commutative $C$-algebras, the $C$-algebras ${}_{C}\mkern-1mu\mathcal{M}(D,R)$, ${}_{C}\mkern-1mu\mathcal{M}(D,S)$ and ${}_{C}\mkern-1mu\mathcal{M}(D,T)$ are commutative.
	By the universal property of the coproduct $S \otimes_R T$ in the category of commutative $C$-algebras, there exists a unique homomorphism $ \rho_{}  \colon S \otimes_R T \to {}_{C}\mkern-1mu\mathcal{M}(D,S \otimes_R T)$ of $C$-algebras that makes the diagram
	\begin{align*}
		\xymatrixrowsep{3em}
		\xymatrixcolsep{4em}
		\xymatrix@!0{
			& S \ar[rr]^{ \rho_{S} } \ar[dr] & & {}_{C}\mkern-1mu\mathcal{M}(D,S) \ar[dr] \\
			R \ar[ur] \ar[dr] & & S \otimes_R T \ar[rr]^-{ \rho_{} } & & {}_{C}\mkern-1mu\mathcal{M}(D,S \otimes_R T)\\
			& T \ar[rr]^{ \rho_{T} } \ar[ur] & & {}_{C}\mkern-1mu\mathcal{M}(D,T) \ar[ur]
		}
	\end{align*}
	commutative.
	This homomorphism gives rise to a $D$-module algebra structure on $S \otimes_R T$, since the diagrams
	\begin{align*}
		\xymatrixrowsep{3em}
		\xymatrix{
			S \otimes_R T \ar[r]^-{ \rho_{} }  \ar[dr]_{\id} & {}_{C}\mkern-1mu\mathcal{M}(D, S \otimes_R T) \ar[d]^{\operatorname{ev}_{1_D}} \\
			& S \otimes_R T
		}
	\end{align*}
	and
	\begin{align*}
		\xymatrixrowsep{3em}
		\xymatrixcolsep{6em}
		\xymatrix{
			S \otimes_R T \ar[r]^{ \rho_{} } \ar[d]^{ \rho_{} } & {}_{C}\mkern-1mu\mathcal{M}(D, S \otimes_R T) \ar[d]^{{}_{C}\mkern-1mu\mathcal{M}(D,  \rho_{} )} \\
			{}_{C}\mkern-1mu\mathcal{M}(D, S \otimes_R T) \ar[r]^-{{}_{C}\mkern-1mu\mathcal{M}(m_D, S \otimes_R T)} & {}_{C}\mkern-1mu\mathcal{M}(D, {}_{C}\mkern-1mu\mathcal{M}(D, S \otimes_R T))
		}
	\end{align*}
	commute (which also follows from the universal property of $S \otimes_R T$).
	Using the universal property of $S \otimes_R T$ again, we see that $(S \otimes_R T, \Psi)$ is in fact the coproduct of $(S, \Psi_{S})$ and $(T, \Psi_{T})$ over $(R, \Psi_{R})$ in the category of commutative $D$-module algebras.

	Since $S \to S \otimes_R T, s \mapsto s \otimes 1$ and $T \to S \otimes_R T, t \mapsto 1 \otimes t$ are homomorphisms of $D$-module algebras, we have $\Psi(d \otimes s \otimes 1) = \Psi_S(d \otimes s) \otimes 1$ and $\Psi(d \otimes 1 \otimes t) = 1 \otimes \Psi_T(d \otimes t)$ for all $s \in S, t \in T$ and $d \in D$.
	Since $\Psi$ measures $S \otimes_R T$ to itself, it follows $\Psi(d \otimes s \otimes t) = \sum_{{(d)}} \Psi_S(d_{(1)} \otimes s) \otimes \Psi_T(d_{(2)} \otimes t)$.
\end{proof}

\begin{prop}
	\label{prop:extension_of_measuring_and_D-module_algebras_to_inverse_limits}
	\begin{enumerate}
	\item \label{itm:prop:extension_of_measuring_and_D-module_algebras_to_inverse_limits_measuring_part}
	Let $I$ and $J$ be two small categories,
	\[ R \colon {J}^{op} \to {\mathsf{CAlg}_{C}} \quad \text{ and } \quad S \colon {I}^{op} \to {\mathsf{CAlg}_{C}} \]
	be two functors.
	We write $R_j$ and $S_i$ instead of $R(j)$ and $S(i)$ for $j \in J$ and $i \in I$, respectively.
	Let $\Gamma$ be a small category and $\tilde{D}$ be a functor from $\Gamma$ to the category of cocommutative $C$-coalgebras and let $D$ be the colimit of $\tilde{D}$, i.e. $D = \varinjlim_{\gamma \in \Gamma} \tilde{D}(\gamma$).
	We denote $\tilde{D}(\gamma)$ by $D_\gamma$.
	Let
	\[\alpha \colon {(I \times \Gamma)}^{op} \to {J}^{op}\]
	be a functor and
	suppose that for every $i \in I$ and every $\gamma \in \Gamma$ the $C$-coalgebra $D_\gamma$ measures $R_{\alpha(i,\gamma)}$ to $S_{i}$ and denote the associated homomorphism of $C$-algebras by
	\[  \rho_{i,\gamma}  \colon R_{\alpha(i,\gamma)} \to {}_{C}\mkern-1mu\mathcal{M}(D_\gamma,S_{i}). \]
	We suppose that these measurings are compatible in the sense that for every morphism $i_1 \to i_2$ in $I$ and $\gamma_1 \to \gamma_2$ in $\Gamma$ the diagram
	\begin{align*}
		\xymatrix{	R_{\alpha(i_{2}, \gamma_2)} \ar[r]^-{ \rho_{i_{2}, \gamma_2} } \ar[d] & {}_{C}\mkern-1mu\mathcal{M}(D_{\gamma_2},S_{i_{2}}) \ar[d] \\
				R_{\alpha(i_{1}, \gamma_1)} \ar[r]^-{ \rho_{i_{1}, \gamma_1} } & {}_{C}\mkern-1mu\mathcal{M}(D_{\gamma_1},S_{i_{1}})
				}
	\end{align*}
	commutes.
	\footnote{This amounts to giving a natural transformation from the functor $R \circ \alpha \colon {(I \times \Gamma)}^{op} \to {\mathsf{CAlg}_{C}}, (i,\gamma) \mapsto R_{\alpha(i, \gamma)}$ to the functor ${(I \times \Gamma)}^{op} \to {\mathsf{CAlg}_{C}}, (i,\gamma) \mapsto {}_{C}\mkern-1mu\mathcal{M}(D_\gamma, S_i)$.}
	Then there exists a unique $D$-measure from $\hat{R} \coloneqq \varprojlim_{j \in {J}^{op}} R_j$ to $\hat{S} \coloneqq \varprojlim_{i \in {I}^{op}} S_i$ with associated homomorphism $\hat{ \rho_{} } \colon \hat{R} \to {}_{C}\mkern-1mu\mathcal{M}(D,\hat{S})$ such that
	\begin{align*}
		\xymatrix{	\hat{R} \ar[r]^-{\hat{ \rho_{} }} \ar[d]^{\pi_{\alpha(i,\gamma)}} & {}_{C}\mkern-1mu\mathcal{M}(D,\hat{S}) \ar[d]^{\operatorname{res}^{D}_{D_\gamma} \circ {}_{C}\mkern-1mu\mathcal{M}(D, \pi_i)} \\
				R_{\alpha(i, \gamma)} \ar[r]^-{ \rho_{i, \gamma} } & {}_{C}\mkern-1mu\mathcal{M}(D_\gamma,S_{i})
				}
	\end{align*}
	commutes for all $i \in I$ and $\gamma \in \Gamma$, where $\pi_j \colon \hat{R} \to R_j$ and $\pi_i \colon \hat{S} \to S_i$ denote the projections.

	\item \label{itm:prop:extension_of_measuring_and_D-module_algebras_to_inverse_limits_unit_part}
	We assume in addition that there are compatible homomorphisms $\eta_{D_\gamma} \colon C \to D_\gamma$ such that $1_{D_\gamma} \coloneqq \eta_{D_\gamma}(1_C)$ fulfills $\Delta_{D_\gamma}(1_{D_\gamma}) = 1_{D_\gamma} \otimes 1_{D_\gamma}$ and $\varepsilon_{D_\gamma}(1_{D_\gamma}) = 1_C$ for all $\gamma \in \Gamma$.
	Furthermore, we assume that there is a natural transformation from the functor $R \circ \alpha$ to $S$ (both are considered as functors from ${(I \times \Gamma)}^{op}$ to ${\mathsf{CAlg}_{C}}$, where $S$ does not depend on the second factor), giving rise to homomorphisms of $C$-algebras
	\begin{align}
		\label{eqn:prop:extension_of_measuring_and_D-module_algebras_to_inverse_limits:structure_homomorphism_Raig_Si}
		R_{\alpha(i,\gamma)} \to S_i
	\end{align}
	for all $i \in I$ and $\gamma \in \Gamma$ and thus to a homomorphism
	\begin{align}
		\label{eqn:prop:extension_of_measuring_and_D-module_algebras_to_inverse_limits:structure_homomorphism_R_S}
		\hat{R} \to \hat{S}.
	\end{align}
	If the diagram
	\begin{align*}
		\xymatrix{R_{\alpha(i,\gamma)} \ar[r]^-{ \rho_{i,\gamma} } \ar[rd] & {}_{C}\mkern-1mu\mathcal{M}(D_\gamma,S_i) \ar[d]^{\operatorname{ev}_{1_{D_\gamma}}} \\
		 & S_i
		}
	\end{align*}
	commutes for all $i \in I$ and $\gamma \in \Gamma$, then $\hat{ \rho_{} }$ makes the diagram
	\begin{align*}
		\xymatrix{\hat{R} \ar[r]^-{\hat{ \rho_{} }} \ar[rd] & {}_{C}\mkern-1mu\mathcal{M}(D,\hat{S}) \ar[d]^{\operatorname{ev}_{1_D}} \\
		 & \hat{S}
		}
	\end{align*}
	commutative.

	\item \label{itm:prop:extension_of_measuring_and_D-module_algebras_to_inverse_limits_iterative_part}
	We assume furthermore that $D$ is a (cocommutative) $C$-bialgebra, that $I=J$, that $R = S$ and that there is a functor $\mu \colon \Gamma \times \Gamma \to \Gamma, (\gamma, \tilde{\gamma}) \mapsto \mu_{\gamma \mkern-2mu, \mkern-2mu \tilde{\gamma}}$ and a natural transformation from $(\tilde{D} \otimes_C \tilde{D}) \colon \Gamma \times \Gamma \to {\mathsf{CAlg}_{C}}, (\gamma, \tilde{\gamma}) \mapsto \tilde{D}(\gamma) \otimes_{C} \tilde{D}(\tilde{\gamma})$ to $\tilde{D }\circ \mu$ inducing homomorphisms $m_{\gamma, \tilde{\gamma}} \colon D_{\gamma} \otimes_{C} D_{\tilde{\gamma}} \to D_{\mu_{\gamma \mkern-2mu, \mkern-2mu \tilde{\gamma}}}$ that are compatible with the multiplication $m_{D}$ in the sense that
	the diagram
	\begin{align}
	\label{eqn:compatibility_of_multiplication}
	\xymatrix{
		D \otimes_C D \ar[r]^-{m_{D}} & D \\
		D_{\gamma} \otimes_C D_{\tilde{\gamma}} \ar[u] \ar[r]^-{m_{\gamma, \tilde{\gamma}}} & D_{\mu_{\gamma \mkern-2mu, \mkern-2mu \tilde{\gamma}}} \ar[u]
	}
	\end{align}
	commutes for all $\gamma, \tilde{\gamma} \in \Gamma$.
	If the diagram
	\begin{align}
		\label{eqn:prop:extension_of_measuring_and_D-module_algebras_to_inverse_limits_iterativity_diagram}
		\xymatrixcolsep{2.4em}
		\xymatrix{	R_{\alpha(\alpha(i, \mu_{\gamma \mkern-2mu, \mkern-2mu \tilde{\gamma}}),\mu_{\gamma \mkern-2mu, \mkern-2mu \tilde{\gamma}})} \ar[d] \ar[r]
					& R_{\alpha(\alpha(i, \gamma)),\tilde{\gamma})} \ar[r]^-{ \rho_{\alpha(i,\gamma),\tilde{\gamma}} }
					&{}_{C}\mkern-1mu\mathcal{M}(D_{\tilde{\gamma}}, R_{\alpha(i,\gamma)}) \ar[dd]^{{}_{C}\mkern-1mu\mathcal{M}(D_{\tilde{\gamma}},  \rho_{i,\gamma} )} \\
				R_{\alpha(i,\mu_{\gamma \mkern-2mu, \mkern-2mu \tilde{\gamma}})} \ar[d]^-{ \rho_{i, \mu_{\gamma \mkern-2mu, \mkern-2mu \tilde{\gamma}}} } \\
				{}_{C}\mkern-1mu\mathcal{M}(D_{\mu_{\gamma \mkern-2mu, \mkern-2mu \tilde{\gamma}}}, R_{i}) \ar[rr]^-{{}_{C}\mkern-1mu\mathcal{M}(m_{\gamma, \tilde{\gamma}}, R_i)}
					&
					& {}_{C}\mkern-1mu\mathcal{M}(D_{\gamma} \otimes_C D_{\tilde{\gamma}}, R_i),
				}
	\end{align}
	where the vertical homomorphism with source the top left corner is an instance of \eqref{eqn:prop:extension_of_measuring_and_D-module_algebras_to_inverse_limits:structure_homomorphism_Raig_Si},
	commutes for all $\gamma, \tilde{\gamma} \in \Gamma$ and $i \in I$, then the $D$-measure from $\hat{R}$ to itself above is a $D$-module algebra structure.

	Furthermore, $(\hat{R}, \hat{ \rho_{} })$ has the following universal mapping property:
	Given a $D$-module algebra $(T,  \rho_{T} )$ and compatible homomorphisms $(\varphi_i \colon T \to R_i)_{i \in I}$ such that
	\begin{align}
	\label{eqn:compatibility_of_homomorphisms_varphi_i}
		\operatorname{res}^{D}_{D_\gamma} \circ {}_{C}\mkern-1mu\mathcal{M}(D, \varphi_i) \circ  \rho_{T}  =  \rho_{i,\gamma}  \circ \varphi_{\alpha(i,\gamma)}
	\end{align}
	for all $i \in I$ and $\gamma \in \Gamma$, there exists a unique homomorphism of $D$-module algebras $\Phi \colon T \to \hat{R}$ such that $\pi_i \circ \Phi = \varphi_i$ for all $i \in I$.
	\end{enumerate}
\end{prop}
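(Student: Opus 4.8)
The guiding principle throughout is that all three assertions reduce to universal properties of the (co)limits in play, once one exploits that ${}_{C}\mkern-1mu\mathcal{M}(-,-)$ sends colimits in its first argument and limits in its second argument to limits. Concretely, from $D=\varinjlim_{\gamma}D_\gamma$ and $\hat{S}=\varprojlim_i S_i$ one obtains a natural isomorphism of $C$-algebras
\[
	{}_{C}\mkern-1mu\mathcal{M}(D,\hat{S})\;\cong\;\varprojlim_{(i,\gamma)}{}_{C}\mkern-1mu\mathcal{M}(D_\gamma,S_i),
\]
the convolution products matching because $D$ is a colimit of coalgebras and $\hat{S}$ a limit of algebras; under it the composite $\operatorname{res}^{D}_{D_\gamma}\circ{}_{C}\mkern-1mu\mathcal{M}(D,\pi_i)$ is exactly the $(i,\gamma)$-projection. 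The plan is therefore to build $\hat{\rho}$ as a map into this limit and to verify each required identity componentwise.

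For the first part I would note that each $\rho_{i,\gamma}$ is a homomorphism of $C$-algebras by Proposition~\ref{prop_characterization_of_measuring}, hence so is $\rho_{i,\gamma}\circ\pi_{\alpha(i,\gamma)}$. Combining the compatibility of the projections $\pi_{\alpha(i,\gamma)}$ with the transition maps of $R$ and the assumed compatibility square for the $\rho_{i,\gamma}$, these composites form a cone over $(i,\gamma)\mapsto{}_{C}\mkern-1mu\mathcal{M}(D_\gamma,S_i)$. The universal property of the limit then yields a unique homomorphism of $C$-algebras $\hat{\rho}\colon\hat{R}\to{}_{C}\mkern-1mu\mathcal{M}(D,\hat{S})$ making the displayed square commute, and being a homomorphism of $C$-algebras it is precisely a $D$-measure, again by Proposition~\ref{prop_characterization_of_measuring}; uniqueness is immediate.

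For the second part I would observe that the compatible units $\eta_{D_\gamma}$ glue to an element $1_D\in D$ restricting to each $1_{D_\gamma}$, so that $\operatorname{ev}_{1_D}$ is compatible with the $\operatorname{ev}_{1_{D_\gamma}}$. To prove that $\operatorname{ev}_{1_D}\circ\hat{\rho}$ equals~\eqref{eqn:prop:extension_of_measuring_and_D-module_algebras_to_inverse_limits:structure_homomorphism_R_S} it suffices to test after each projection $\pi_i\colon\hat{S}\to S_i$; the square from the first part reduces this to the hypothesis that $\operatorname{ev}_{1_{D_\gamma}}\circ\rho_{i,\gamma}$ agrees with the structure homomorphism~\eqref{eqn:prop:extension_of_measuring_and_D-module_algebras_to_inverse_limits:structure_homomorphism_Raig_Si}. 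Applied with $S=R$ (so that the induced self-map~\eqref{eqn:prop:extension_of_measuring_and_D-module_algebras_to_inverse_limits:structure_homomorphism_R_S} of $\hat{R}$ is the identity) this already yields the counit diagram of Lemma~\ref{lem:equivalent_defining_conditions_for_module_algebras}.

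The crux of the third part is then the remaining (coassociativity) diagram of Lemma~\ref{lem:equivalent_defining_conditions_for_module_algebras}. Via~\eqref{eqn_implicit_isom} both of its legs land in ${}_{C}\mkern-1mu\mathcal{M}(D\otimes_C D,\hat{R})\cong\varprojlim_{(i,\gamma,\tilde{\gamma})}{}_{C}\mkern-1mu\mathcal{M}(D_\gamma\otimes_C D_{\tilde{\gamma}},R_i)$, so I would again compare them projection by projection. On the $(i,\gamma,\tilde{\gamma})$-component, unravelling the projections of $\hat{\rho}$ through the square of the first part and using the compatibility~\eqref{eqn:compatibility_of_multiplication} of $m_{\gamma,\tilde{\gamma}}$ with $m_D$, the two composites become exactly the two paths around the iterativity diagram~\eqref{eqn:prop:extension_of_measuring_and_D-module_algebras_to_inverse_limits_iterativity_diagram}, whose commutativity is assumed. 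I expect this bookkeeping---matching every arrow of the large square at finite level with its counterpart in~\eqref{eqn:prop:extension_of_measuring_and_D-module_algebras_to_inverse_limits_iterativity_diagram}, and in particular tracking how $m_D$ factors through the $m_{\gamma,\tilde{\gamma}}$---to be the main obstacle; everything else is formal. With both diagrams in hand, Lemma~\ref{lem:equivalent_defining_conditions_for_module_algebras} shows $\hat{\rho}$ is a $D$-module algebra structure. For the universal mapping property, the universal property of $\hat{R}=\varprojlim_i R_i$ gives a unique homomorphism of $C$-algebras $\Phi\colon T\to\hat{R}$ with $\pi_i\circ\Phi=\varphi_i$, and I would check it respects the module structures via Lemma~\ref{lem:equivalent_defining_conditions_morphisms_compatible_with_measurings}: testing ${}_{C}\mkern-1mu\mathcal{M}(D,\Phi)\circ\rho_T=\hat{\rho}\circ\Phi$ after each projection to ${}_{C}\mkern-1mu\mathcal{M}(D_\gamma,R_i)$ collapses it, through the square of the first part, to precisely the assumption~\eqref{eqn:compatibility_of_homomorphisms_varphi_i}.
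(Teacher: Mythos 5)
Your proposal is correct and takes essentially the same route as the paper's proof: $\hat{\rho}$ is obtained from the cone $(\rho_{i,\gamma}\circ\pi_{\alpha(i,\gamma)})$ via the universal property of $\varprojlim_{(i,\gamma)}{}_{C}\mkern-1mu\mathcal{M}(D_\gamma,S_i)\cong{}_{C}\mkern-1mu\mathcal{M}(D,\hat{S})$, and the counit, coassociativity, and universal-mapping-property diagrams are all verified componentwise exactly as in the paper (outer diagram by hypothesis, trapezoids by compatibility of the projections and of $m_{\gamma,\tilde{\gamma}}$ with $m_D$, inner diagram by the universal property of the relevant limit).
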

\begin{proof}
	The universal mapping property of the limit
	$\varprojlim_{(i, \gamma) \in {(I \times \Gamma)}^{op}} \mkern-2mu {}_{C}\mkern-1mu\mathcal{M}(D_\gamma ,S_i)$
	provides a unique homomorphism of $C$-algebras
	\begin{align}
	\label{eqn:universal_Taylor_on_extension_first_part}
		\hat{R} \to \varprojlim_{(i, \gamma) \in {(I \times \Gamma)}^{op}} {}_{C}\mkern-1mu\mathcal{M}(D_\gamma,S_i)
	\end{align}
	such that the diagram
	\begin{align*}
		\xymatrixrowsep{1.5em}
		\xymatrix{
			& R_{\alpha(i_2,\gamma_2)} \ar[rr]^{ \rho_{i_2, \gamma_2} } \ar[dd]
				& & {}_{C}\mkern-1mu\mathcal{M}(D_{\gamma_2},S_{i_2}) \ar[dd] \\
			\hat{R} \ar[ur]^{\pi_{\alpha(i_2,\gamma_2)}} \ar[dr]_{\pi_{\alpha(i_1, \gamma_1)}} \ar@{.>}[rr]
				& & \varprojlim_{(i,\gamma) \in {(I \times \Gamma)}^{op}} {}_{C}\mkern-1mu\mathcal{M}(D_\gamma, S_i) \ar[ur] \ar[dr] & \\
			& R_{\alpha(i_1,\gamma_1)} \ar[rr]^{ \rho_{i_1, \gamma_1} } & & {}_{C}\mkern-1mu\mathcal{M}(D_{\gamma_1},S_{i_1})
		}
	\end{align*}
	commutes for all $i_1 \to i_2$ in $I$ and all $\gamma_1 \to \gamma_2$ in $\Gamma$.
	Since the functor ${}_{C}\mkern-1mu\mathcal{M}(D, -)$ preserves limits of $C$-modules
	and since the functors ${}_{C}\mkern-1mu\mathcal{M}(-, S_i)$ turn colimits into limits (cf. \cite[V.4]{MacLane:1971}),
	we have an isomorphism
	\begin{align}
		\label{eqn:universal_Taylor_on_extension_second_part}
		\varprojlim_{(i, \gamma) \in {(I \times \Gamma)}^{op}} {}_{C}\mkern-1mu\mathcal{M}(D_\gamma, S_i)
			\cong {}_{C}\mkern-1mu\mathcal{M}(\varinjlim_{\gamma \in \Gamma} D_\gamma, \varprojlim_{i \in {I}^{op}} S_i)
			= {}_{C}\mkern-1mu\mathcal{M}(D,\hat{S}),
	\end{align}
	which is a homomorphism of $C$-algebras.
	By composing \eqref{eqn:universal_Taylor_on_extension_first_part} and \eqref{eqn:universal_Taylor_on_extension_second_part}, we obtain a homomorphism of $C$-algebras
	\[
		\hat{ \rho_{} } \colon \hat{R} \to {}_{C}\mkern-1mu\mathcal{M}(D,\hat{S}).
	\]

	For the proof of part~\ref{itm:prop:extension_of_measuring_and_D-module_algebras_to_inverse_limits_unit_part}
	we note that the outer triangle in the diagram
	\begin{align*}
	\xymatrixrowsep{1.5em}
	\xymatrix@!C{
		R_{\alpha(i,\gamma)} \ar[rrr]^{ \rho_{i, \gamma} } \ar@/_4em/[rrrddd] & & & {}_{C}\mkern-1mu\mathcal{M}(D_\gamma,S_i) \ar[ddd]^{\operatorname{ev}_{1_{D}}} \\
		& \hat{R} \ar[lu]_{\pi_{\alpha(i,\gamma)}} \ar[r]^{\hat{ \rho_{} }} \ar[dr] & {}_{C}\mkern-1mu\mathcal{M}(D,\hat{S}) \ar[d]^{\operatorname{ev}_{1_D}} \\
		& & \hat{S} \ar[rd]^{\pi_i} \\
		& & & S_i,
	}
	\end{align*}
	where the unlabeled arrows are \eqref{eqn:prop:extension_of_measuring_and_D-module_algebras_to_inverse_limits:structure_homomorphism_Raig_Si} and \eqref{eqn:prop:extension_of_measuring_and_D-module_algebras_to_inverse_limits:structure_homomorphism_R_S},
	commutes by assumption for all $i \in I$ and $\gamma \in \Gamma$.
	Therefore, the inner triangle also commutes by the universal mapping property of the limit $\hat{S} = \varprojlim_{i \in {I}^{op}} S_i$.

	 By the assumption in part~\ref{itm:prop:extension_of_measuring_and_D-module_algebras_to_inverse_limits_iterative_part} the outer rectangle in the diagram
	\begin{align*}
		\xymatrixcolsep{0.3em}
		\xymatrixrowsep{2.0em}
		\xymatrix{
		R_{\alpha(\alpha( i, \mu_{\gamma \mkern-2mu, \mkern-2mu \tilde{\gamma}},\mu_{\gamma \mkern-2mu, \mkern-2mu \tilde{\gamma}})} \ar[dd] \ar[rr]
			&
			& R_{\alpha( \alpha(i, \gamma) ),\tilde{\gamma})} \ar[rr]^-{ \rho_{\alpha(i,\gamma),\tilde{\gamma}} }
			&
			& {}_{}\mkern-1mu\mathcal{M}( D_{\tilde{\gamma}}, R_{\alpha(i,\gamma )} ) \ar[dddd]^{{}_{}\mkern-1mu\mathcal{M}( D_{\tilde{\gamma}},  \rho_{i, \gamma} )} \\
		& \hat{R} \ar[rr]^{\hat{ \rho_{} }} \ar[dd]^{\hat{ \rho_{} }} \ar[ul]
			&
			& {}_{}\mkern-1mu\mathcal{M}(D, \hat{R}) \ar[dd]^{{}_{}\mkern-1mu\mathcal{M}(D,\hat{ \rho_{} })} \ar[ur]^(0.3){\operatorname{res}^{D}_{D_{\tilde{\gamma}}} \mkern-9mu \circ {}_{C}\mkern-1mu\mathcal{M}(D, \pi_{\alpha(i,\gamma)}) \phantom{-}} & \\
		R_{\alpha(i,\mu_{\gamma \mkern-2mu, \mkern-2mu \tilde{\gamma}})} \ar[dd]^-{ \rho_{i, \mu_{\gamma \mkern-2mu, \mkern-2mu \tilde{\gamma}}} } \\
		& {}_{}\mkern-1mu\mathcal{M}(D, \hat{R}) \ar[dl]^(0.3){\operatorname{res}^{D}_{D_{\mu_{\gamma \mkern-2mu, \mkern-2mu \tilde{\gamma}}}} \mkern-11mu \circ {}_{}\mkern-1mu\mathcal{M}(D,\pi_i)} \ar[rr]^-{{}_{C}\mkern-1mu\mathcal{M}(m_{D}, \hat{R})}
			&
				& {}_{}\mkern-1mu\mathcal{M}(D \otimes D, \hat{R}) \ar[rd]_(0.3){\operatorname{res}^{D \otimes D}_{D_{\gamma} \otimes D_{\tilde{\gamma}}} \mkern-11mu \circ {}_{}\mkern-1mu\mathcal{M}(D \otimes D, \pi_{i}) \phantom{--}} & \\
		{}_{}\mkern-1mu\mathcal{M}(D_{\mu_{\gamma \mkern-2mu, \mkern-2mu \tilde{\gamma}}}, R_{i}) \ar[rrrr]^-{{}_{}\mkern-1mu\mathcal{M}(m_{\gamma, \tilde{\gamma}}, R_i)}
			&
			&
			&
			& {}_{}\mkern-1mu\mathcal{M}(D_{\gamma} \otimes D_{\tilde{\gamma}}, R_i)
		}
	\end{align*}
	commutes for all $i \in I$ and $\gamma, \tilde{\gamma} \in \Gamma$, where all tensor products are over $C$ and where we abbreviate ${}_{C}\mkern-1mu\mathcal{M}$ by ${}_{}\mkern-1mu\mathcal{M}$.
	The trapezoids commute, since the projections $\pi_i \colon \hat{R} \to R_i$ are compatible with the $D$-measurings and since \eqref{eqn:compatibility_of_multiplication} commutes.
	Thus, by the universal mapping property of
	\[
		{}_{C}\mkern-1mu\mathcal{M}(D \otimes_C D, \hat{R}) \cong \varprojlim_{(i, \gamma, \tilde{\gamma}) \in {(I \times \Gamma \times \Gamma)}^{op}} {}_{C}\mkern-1mu\mathcal{M}(D_{\gamma} \otimes_C D_{\tilde{\gamma}}, R_i),
	\]
	the inner rectangle also commutes and we see that $\hat{R}$ is a $D$-module algebra.

	If $\varphi_i \colon T \to R_i$ are compatible homomorphisms such that \eqref{eqn:compatibility_of_homomorphisms_varphi_i} holds, then, by the universal property of $\hat{R} = \varprojlim_{i \in {I}^{op}} R_i$ in the category of commutative $C$-algebras, there exists a homomorphism of $C$-algebras $\Phi \colon T \to \hat{R}$ such that $\pi_{i} \circ \Phi = \varphi_i$ and so the triangles at the left and right in the diagram
	\begin{align*}
	\xymatrix{
		T \ar[rrr]^{ \rho_{T} } \ar[dd]^{\Phi} \ar[rd]^{\varphi_{\alpha(i,\gamma)}} & & & {}_{C}\mkern-1mu\mathcal{M}(D,T) \ar[dd]^{{}_{C}\mkern-1mu\mathcal{M}(D,\Phi)} \ar[ld]_{\operatorname{res}^{D}_{D_\gamma} \circ {}_{C}\mkern-1mu\mathcal{M}(D,\varphi_i) \phantom{-}} \\
		& R_{\alpha(i,\gamma)} \ar[r]^-{ \rho_{i, \gamma} } & {}_{C}\mkern-1mu\mathcal{M}(D_\gamma,R_i) \\
		\hat{R} \ar[rrr]^-{\hat{ \rho_{} }} \ar[ru]_{\pi_{\alpha(i,\gamma)}} & & & {}_{C}\mkern-1mu\mathcal{M}(D,\hat{R}) \ar[lu]^{\operatorname{res}^{D}_{D_\gamma} \circ {}_{C}\mkern-1mu\mathcal{M}(D,\pi_i) \phantom{--}}
	}
	\end{align*}
	commute for all $i \in I$.
	The two trapezoids at the top and bottom commute by assumption and by the previously shown, respectively.
	By the universal property of ${}_{C}\mkern-1mu\mathcal{M}(D,\hat{R}) \cong \varprojlim_{(i,\gamma) \in {(I \times \Gamma)}^{op}} {}_{C}\mkern-1mu\mathcal{M}(D_\gamma,R_i)$ we obtain ${}_{C}\mkern-1mu\mathcal{M}(D, \Phi) \circ  \rho_{T}  = \hat{ \rho_{} } \circ \Phi$, i.e. $\Phi$ is a homomorphism of $D$-module algebras.
\end{proof}

\begin{cor}
	\label{cor:inverse_systems_of_D-module_algebras_have_limits}
	Let $D$ be a cocommutative $C$-bialgebra.
	Then small inverse systems of $D$-module algebras have inverse limits.
\end{cor}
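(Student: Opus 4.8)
The plan is to deduce this as the special case of Proposition~\ref{prop:extension_of_measuring_and_D-module_algebras_to_inverse_limits} in which the colimit presentation of $D$ is trivial. Concretely, let $I = J$ be the small index category of the given inverse system and let $R = S \colon I^{op} \to {\mathsf{CAlg}_{C}}$ be its underlying functor of commutative $C$-algebras, so that $\hat{R} = \varprojlim_{i \in I^{op}} R_i$ is the inverse limit formed in ${\mathsf{CAlg}_{C}}$. I would take $\Gamma$ to be the terminal category (one object $*$, only the identity morphism) and $\tilde{D}$ the constant functor with value $D$, so that $D = \varinjlim_{\gamma \in \Gamma} \tilde{D}(\gamma)$ holds tautologically; this is exactly where the hypothesis that $D$ be cocommutative enters, since it is needed to invoke the proposition. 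Finally I would let $\alpha \colon (I \times \Gamma)^{op} \to J^{op}$ be the canonical isomorphism $(i,*) \mapsto i$, and let $ \rho_{i,*}  =  \rho_{R_i}  \colon R_i \to {}_{C}\mkern-1mu\mathcal{M}(D, R_i)$ be the homomorphism associated to the $D$-module algebra structure of $R_i$.

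First I would verify the hypotheses of part~\ref{itm:prop:extension_of_measuring_and_D-module_algebras_to_inverse_limits_measuring_part}. The only compatibility to check is that for each morphism $i_1 \to i_2$ in $I$ the transition map $R_{i_2} \to R_{i_1}$ is compatible with the measurings, which by Lemma~\ref{lem:equivalent_defining_conditions_morphisms_compatible_with_measurings} is precisely the statement that it is a homomorphism of $D$-module algebras; this holds because the diagram lives in the category of $D$-module algebras by assumption. This already produces a $D$-measure $\hat{ \rho_{} } \colon \hat{R} \to {}_{C}\mkern-1mu\mathcal{M}(D, \hat{R})$ together with the fact that the projections $\pi_i \colon \hat{R} \to R_i$ are compatible with the measurings.

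Next I would feed in the unit datum of part~\ref{itm:prop:extension_of_measuring_and_D-module_algebras_to_inverse_limits_unit_part}, taking $\eta_{D_*} = \eta_D$; the conditions $\Delta_D(1_D) = 1_D \otimes 1_D$ and $\varepsilon_D(1_D) = 1_C$ hold because $D$ is a bialgebra, and the natural transformation $R \circ \alpha \to S$ is the identity. The required triangle then reads $\operatorname{ev}_{1_D} \circ  \rho_{R_i}  = \id_{R_i}$, which is exactly the second diagram of Lemma~\ref{lem:equivalent_defining_conditions_for_module_algebras}. For part~\ref{itm:prop:extension_of_measuring_and_D-module_algebras_to_inverse_limits_iterative_part} I would take $\mu$ the unique functor $\Gamma \times \Gamma \to \Gamma$ and $m_{*,*} = m_D$, so that the compatibility square~\eqref{eqn:compatibility_of_multiplication} commutes trivially. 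The crucial observation is that, after collapsing all the trivial indices, the iterativity diagram~\eqref{eqn:prop:extension_of_measuring_and_D-module_algebras_to_inverse_limits_iterativity_diagram} becomes exactly the equation ${}_{C}\mkern-1mu\mathcal{M}(m_D, R_i) \circ  \rho_{R_i}  = {}_{C}\mkern-1mu\mathcal{M}(D,  \rho_{R_i} ) \circ  \rho_{R_i} $, i.e. the first diagram of Lemma~\ref{lem:equivalent_defining_conditions_for_module_algebras} asserting that $R_i$ is a $D$-module algebra. Hence all hypotheses are satisfied and $\hat{R}$ acquires a $D$-module algebra structure for which the $\pi_i$ are homomorphisms of $D$-module algebras.

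It remains to identify the universal mapping property of part~\ref{itm:prop:extension_of_measuring_and_D-module_algebras_to_inverse_limits_iterative_part} with that of an inverse limit in the category of $D$-module algebras. Given a $D$-module algebra $T$ and a cone of $D$-module algebra homomorphisms $\varphi_i \colon T \to R_i$, condition~\eqref{eqn:compatibility_of_homomorphisms_varphi_i} reduces (again by Lemma~\ref{lem:equivalent_defining_conditions_morphisms_compatible_with_measurings}, using $\operatorname{res}^{D}_{D} = \id$) to the assertion that each $\varphi_i$ is a homomorphism of $D$-module algebras, which holds by hypothesis; the proposition then yields a unique homomorphism of $D$-module algebras $\Phi \colon T \to \hat{R}$ with $\pi_i \circ \Phi = \varphi_i$. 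This is precisely the universal property characterizing $\hat{R} = \varprojlim_{i \in I^{op}} R_i$ as the inverse limit in the category of $D$-module algebras. I do not expect a genuine obstacle; the only point requiring care is the bookkeeping showing that, under the trivial choice of $\Gamma$, each hypothesis diagram of the proposition collapses to one of the two defining conditions of Lemmas~\ref{lem:equivalent_defining_conditions_for_module_algebras} and~\ref{lem:equivalent_defining_conditions_morphisms_compatible_with_measurings}.
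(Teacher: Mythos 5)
Your proposal is correct and follows exactly the paper's own route: the paper likewise deduces the corollary from Proposition~\ref{prop:extension_of_measuring_and_D-module_algebras_to_inverse_limits} by taking $I=J$, $R=S$, $\Gamma$ the one-object one-morphism category with $D_\gamma = D$, and $\alpha(i,\gamma)=i$. Your additional bookkeeping, checking that the hypothesis diagrams collapse to the defining conditions of lemmas~\ref{lem:equivalent_defining_conditions_for_module_algebras} and~\ref{lem:equivalent_defining_conditions_morphisms_compatible_with_measurings}, is exactly what the paper leaves implicit.
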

\begin{proof}
	This follows from proposition~\ref{prop:extension_of_measuring_and_D-module_algebras_to_inverse_limits} by taking $I=J$, $R=S$, the category $\Gamma$ to consists of exactly one object $\gamma$ and one morphism $\id_\gamma$ and $D_\gamma = D$ and by defining $\alpha \colon {I}^{op} \times {\Gamma}^{op} \to {I}^{op}$ by $\alpha(i,\gamma)=i$ for all $i \in I$.
\end{proof}

\begin{cor}
	\label{cor_extension_of_continous_D-measurings_to_completions}
	Let $R$ be a linear topological ring with fundamental system of neighborhoods $\mathcal{B}$ of $0$.
	Let $\Gamma$ be a small category and $\tilde{D}$ be a functor from $\Gamma$ to the category of cocommutative $C$-coalgebras and let $D$ be the colimit of $\tilde{D}$, i.e. $D = \varinjlim_{\gamma \in \Gamma} \tilde{D}(\gamma$).
	We denote $\tilde{D}(\gamma)$ by $D_\gamma$.
	Let $\Psi \in {}_{C}\mkern-1mu\mathcal{M}(D \otimes_C R, R)$ measure $R$ to $R$ and assume that the associated homomorphism of $C$-algebras $ \rho_{}  \colon R \to {}_{C}\mkern-1mu\mathcal{M}(D,R)$ is continuous with respect to the given topology on $R$ and the linear topology on ${}_{C}\mkern-1mu\mathcal{M}(D,R)$ with fundamental system of neighborhoods of $0$ given by the ideals
	\[ ({}_{C}\mkern-1mu\mathcal{M}(D,I),\Kernel({}_{C}\mkern-1mu\mathcal{M}(D,R) \to {}_{C}\mkern-1mu\mathcal{M}(D_\gamma,R))) \]
	for all $I \in \mathcal{B}$ and $\gamma \in \Gamma$.
	Then $\Psi$ uniquely extends to a $D$-measure $\hat{\Psi} \in {}_{C}\mkern-1mu\mathcal{M}(D \otimes_C \hat{R}, \hat{R})$ from $\hat{R} \coloneqq \varprojlim_{I \in \mathcal{B}} R/I$ to itself and the associated homomorphism of $C$-algebras $\hat{ \rho_{} } \colon \hat{R} \to {}_{C}\mkern-1mu\mathcal{M}(D, \hat{R})$ is continuous with respect to the induced topology on $\hat{R}$ and the linear topology on ${}_{C}\mkern-1mu\mathcal{M}(D,\hat{R})$ with fundamental system of neighborhoods of $0$ given by the ideals
	\[ ({}_{C}\mkern-1mu\mathcal{M}(D,\hat{I}),\Kernel({}_{C}\mkern-1mu\mathcal{M}(D,\hat{R}) \to {}_{C}\mkern-1mu\mathcal{M}(D_\gamma,\hat{R}))) \]
	for all $\hat{I} \in \mathcal{\hat{B}}$ and $\gamma \in \Gamma$, where $\mathcal{\hat{B}}$ is a fundamental system of neighborhoods of $0 \in \hat{R}$.

	If for every $\gamma \in \Gamma$ there is a homomorphism $\eta_{D_\gamma}$ as in proposition~\ref{prop:extension_of_measuring_and_D-module_algebras_to_inverse_limits} \ref{itm:prop:extension_of_measuring_and_D-module_algebras_to_inverse_limits_unit_part} such that $1_{D_\gamma} \coloneqq \eta_{D_\gamma}(1_C)$ fulfills $\Psi(1_{D_\gamma} \otimes a) = a$ for all $a \in R$, then $\hat{\Psi}(1_D \otimes a) = a$ holds for all $a \in \hat{R}$.

	We furthermore assume the existence of a functor $\mu \colon \Gamma \times \Gamma \to \Gamma$ and homomorphisms $m_{\gamma_1, \gamma_2} \colon D_{\gamma_1} \otimes_{C} D_{\gamma_2} \to D_{\mu_{\gamma_1 \mkern-2mu, \mkern-2mu \gamma_2}}$ for all $\gamma_1, \gamma_2 \in \Gamma$ as in proposition~\ref{prop:extension_of_measuring_and_D-module_algebras_to_inverse_limits} \ref{itm:prop:extension_of_measuring_and_D-module_algebras_to_inverse_limits_iterative_part} and that $D$ is a (cocommutative) $C$-bialgebra.
	If $\Psi \in {}_{C}\mkern-1mu\mathcal{M}(D \otimes_C R, R)$ is a $D$-module algebra structure on $R$, then its extension $\hat{\Psi}$ is a $D$-module algebra structure on $\hat{R}$.
\end{cor}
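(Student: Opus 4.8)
The plan is to apply proposition~\ref{prop:extension_of_measuring_and_D-module_algebras_to_inverse_limits} to the inverse system of quotients $(R/I)_{I \in \mathcal{B}}$, taking both index categories of that proposition to be $\mathcal{B}$, viewed as a directed poset under reverse inclusion (it is a filter base because $\mathcal{B}$ is a fundamental system of neighbourhoods of $0$), and taking $R = S$ to be the functor $I \mapsto R/I$ with the canonical projections as transition maps, so that $\hat{R} = \varprojlim_{I \in \mathcal{B}} R/I = \hat{S}$. The functor $\tilde{D}$, its colimit $D$ and the coalgebras $D_\gamma$ are those of the hypothesis.

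The continuity hypothesis enters precisely in the construction of the measurings $\rho_{I,\gamma}$ and the functor $\alpha$. For a pair $(I,\gamma)$, continuity of $\rho$ means that the preimage of the basic neighbourhood $N_{I,\gamma} \coloneqq ({}_{C}\mkern-1mu\mathcal{M}(D,I),\Kernel({}_{C}\mkern-1mu\mathcal{M}(D,R) \to {}_{C}\mkern-1mu\mathcal{M}(D_\gamma,R)))$ contains some ideal of $\mathcal{B}$, which, $\mathcal{B}$ being a filter base, may be shrunk to an ideal $J \subseteq I$; I would set $\alpha(I,\gamma) \coloneqq J$. The composite $R \xrightarrow{\rho} {}_{C}\mkern-1mu\mathcal{M}(D,R) \xrightarrow{} {}_{C}\mkern-1mu\mathcal{M}(D_\gamma, R/I)$ (restrict to $D_\gamma$, then reduce modulo $I$) annihilates $N_{I,\gamma}$, hence kills $J$ and factors through a homomorphism $\rho_{I,\gamma} \colon R/J \to {}_{C}\mkern-1mu\mathcal{M}(D_\gamma, R/I)$, the desired measuring of $R/J = R_{\alpha(I,\gamma)}$ to $R/I = S_I$. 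The compatibility square of part~\ref{itm:prop:extension_of_measuring_and_D-module_algebras_to_inverse_limits_measuring_part} then commutes because every $\rho_{I,\gamma}$ is a factorisation of the single map $\rho$, so its verification is a diagram chase using only functoriality of ${}_{C}\mkern-1mu\mathcal{M}(-,-)$ and of $\operatorname{res}$. Applying part~\ref{itm:prop:extension_of_measuring_and_D-module_algebras_to_inverse_limits_measuring_part} produces the unique $\hat{\rho} \colon \hat{R} \to {}_{C}\mkern-1mu\mathcal{M}(D,\hat{R})$; comparing its defining diagram against the canonical map $R \to \hat{R}$ shows that $\hat{\Psi}$ extends $\Psi$, and reading the same diagram against the basic neighbourhoods $\Kernel(\hat{R} \to R/I)$ shows that $\hat{\rho}$ is continuous for the stated topologies, while uniqueness follows from density of the image of $R \to \hat{R}$ together with continuity of $\hat{\rho}$.

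For the unit statement the inclusions $\alpha(I,\gamma) = J \subseteq I$ furnish the structure homomorphisms $R/J \to R/I$, i.e. the natural transformation $R \circ \alpha \to S$ of part~\ref{itm:prop:extension_of_measuring_and_D-module_algebras_to_inverse_limits_unit_part}, whose induced map $\hat{R} \to \hat{S}$ is the identity. The hypothesis $\Psi(1_{D_\gamma} \otimes a) = a$ translates into $\operatorname{ev}_{1_{D_\gamma}} \circ \rho_{I,\gamma} = (R/J \to R/I)$, which is exactly the triangle required by part~\ref{itm:prop:extension_of_measuring_and_D-module_algebras_to_inverse_limits_unit_part}; that part then yields $\operatorname{ev}_{1_D} \circ \hat{\rho} = \id_{\hat{R}}$, i.e. $\hat{\Psi}(1_D \otimes a) = a$ for all $a \in \hat{R}$. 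For the module-algebra statement I would take $\mu$ and the homomorphisms $m_{\gamma_1,\gamma_2}$ from the hypothesis and check that the iterativity diagram~\eqref{eqn:prop:extension_of_measuring_and_D-module_algebras_to_inverse_limits_iterativity_diagram} commutes; this is the descent to the quotients $R/I$ of the coassociativity condition characterising a $D$-module algebra structure (lemma~\ref{lem:equivalent_defining_conditions_for_module_algebras}), and it holds because $\Psi$ is already such a structure on $R$, so part~\ref{itm:prop:extension_of_measuring_and_D-module_algebras_to_inverse_limits_iterative_part} applies and $\hat{\Psi}$ is a $D$-module algebra structure on $\hat{R}$.

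I expect the main obstacle to lie not in these formal applications but in promoting the pointwise choices $J = \alpha(I,\gamma)$ to an honest functor $\alpha \colon (\mathcal{B} \times \Gamma)^{op} \to \mathcal{B}^{op}$ and confirming that all the compatibility squares of the proposition commute: because $\Gamma$ may carry non-invertible morphisms, the ideals $J$ must be selected coherently rather than independently. Since for each $(I,\gamma)$ the admissible $J$ form a cofinal family in $\mathcal{B}$ and every induced map is a factorisation of the one continuous $\rho$, the choices can be arranged so that the resulting squares — being mere shadows of diagrams already valid for $\rho$ — commute, after which everything reduces to invoking the three parts of proposition~\ref{prop:extension_of_measuring_and_D-module_algebras_to_inverse_limits}.
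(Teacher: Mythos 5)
Your proposal is correct and follows essentially the same route as the paper: use continuity of $\rho$ to choose, for each pair $(I,\gamma)$, an ideal $\alpha(I,\gamma) \in \mathcal{B}$ with $\rho(\alpha(I,\gamma))$ contained in the basic neighbourhood, factor $\rho$ through $\rho_{I,\gamma} \colon R/\alpha(I,\gamma) \to {}_{C}\mkern-1mu\mathcal{M}(D_\gamma, R/I)$, verify the compatibility squares, and then invoke the three parts of proposition~\ref{prop:extension_of_measuring_and_D-module_algebras_to_inverse_limits}. Your extra attention to making the assignment $(I,\gamma) \mapsto \alpha(I,\gamma)$ genuinely functorial is a point the paper passes over silently, and your resolution of it is sound.
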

\begin{proof}
	For all $I \in \mathcal{B}$ and $\gamma \in \Gamma$ there exists an ideal $\alpha(I,\gamma)$ in $\mathcal{B}$ such that $ \rho_{} (\alpha(I,\gamma)) \subseteq ({}_{C}\mkern-1mu\mathcal{M}(D,I),\Kernel({}_{C}\mkern-1mu\mathcal{M}(D,R) \to {}_{C}\mkern-1mu\mathcal{M}(D_\gamma,R)))$
	and we obtain homomorphisms of $C$-algebras $ \rho_{I,\gamma}  \colon R/{\alpha(I,\gamma)} \to {}_{C}\mkern-1mu\mathcal{M}(D_\gamma,R/I)$, which are compatible in the sense that for all $I_2 \subseteq I_1$ in $\mathcal{B}$ and all $\gamma_1 \to \gamma_2$ in $\Gamma$ the diagram
	\begin{align*}
		\xymatrix{	R/{\alpha(I_{2},\gamma_2)} \ar[r]^-{ \rho_{I_{2}, \gamma_2} } \ar[d] & {}_{C}\mkern-1mu\mathcal{M}(D_{\gamma_2},R/{I_{2}}) \ar[d] \\
				R/{\alpha(I_{1},\gamma_1)} \ar[r]^-{ \rho_{I_{1}, \gamma_1} } & {}_{C}\mkern-1mu\mathcal{M}(D_{\gamma_1},R/{I_{1}})
				}
	\end{align*}
	commutes.
	Then the three claims follow from the corresponding parts of proposition~\ref{prop:extension_of_measuring_and_D-module_algebras_to_inverse_limits}.
\end{proof}

In the special case of higher and iterative derivations on adic linear topological rings we obtain:
\begin{cor}
	\label{cor:continous_iterative_derivations_extend_to_completions}
	Let $R$ be a linear topological ring with respect to the $I$-adic topology on $R$, where $I$ is an ideal in $R$, and $\theta_{} \colon R \to R\llbracket {\bm{t}} \rrbracket$ be an $n$-variate higher derivation on $R$ that is continuous with respect to the $I$-adic topology on $R$ and the $(I,{\bm{t}})$-adic topology on $R\llbracket {\bm{t}} \rrbracket$.
	Then the $n$-variate higher derivation $\theta_{}$ extends uniquely to an $n$-variate higher derivation $\hat{\theta}_{} \colon \hat{R} \to \hat{R}\llbracket {\bm{t}} \rrbracket$ on the completion $\hat{R}$, which is again continuous.
	If $\theta_{}$ is iterative, then $\hat{\theta}_{}$ is iterative too.
\end{cor}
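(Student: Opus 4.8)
The plan is to deduce this from corollary~\ref{cor_extension_of_continous_D-measurings_to_completions} by making the appropriate choices of $D$, $\Gamma$ and $\mathcal{B}$. First I would recall from the discussion preceding that corollary that giving an $n$-variate higher derivation $\theta \colon R \to R\llbracket \bm{t}\rrbracket$ is the same as giving a $D_{I \mkern-1.9mu D^{n}}$-measure $\Psi \in {}_{C}\mkern-1mu\mathcal{M}(D_{I \mkern-1.9mu D^{n}} \otimes_C R, R)$ from $R$ to itself with $1_{D_{I \mkern-1.9mu D^{n}}}.a = a$ for all $a \in R$, the map $\theta$ being precisely the homomorphism $ \rho_{}  \colon R \to {}_{C}\mkern-1mu\mathcal{M}(D_{I \mkern-1.9mu D^{n}}, R)$ associated to $\Psi$ under the identification ${}_{C}\mkern-1mu\mathcal{M}(D_{I \mkern-1.9mu D^{n}}, R) \cong R\llbracket \bm{t}\rrbracket$. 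Accordingly I take $D = D_{I \mkern-1.9mu D^{n}}$, the index category $\Gamma = {\mathbb N}^n$ viewed as a poset, $\tilde{D}(\bm{m}) = D_{H \mkern-1.9mu D_{(\bm{m})}^{n}}$, so that $D = \varinjlim_{\bm{m} \in {\mathbb N}^n} D_{H \mkern-1.9mu D_{(\bm{m})}^{n}}$ as already noted in the text, and $\mathcal{B} = \{I^j \mid j \in {\mathbb N}\}$ as fundamental system of neighbourhoods of $0$ defining the $I$-adic topology, with completion $\hat{R} = \varprojlim_j R/I^j$.

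The crucial point I would then verify is that under the identification ${}_{C}\mkern-1mu\mathcal{M}(D_{I \mkern-1.9mu D^{n}}, R) \cong R\llbracket \bm{t}\rrbracket$ the linear topology appearing in corollary~\ref{cor_extension_of_continous_D-measurings_to_completions} coincides with the $(I, \bm{t})$-adic topology. Under this identification ${}_{C}\mkern-1mu\mathcal{M}(D_{I \mkern-1.9mu D^{n}}, I^j)$ becomes $I^j\llbracket \bm{t}\rrbracket$, the power series with all coefficients in $I^j$, while $\Kernel({}_{C}\mkern-1mu\mathcal{M}(D_{I \mkern-1.9mu D^{n}}, R) \to {}_{C}\mkern-1mu\mathcal{M}(D_{H \mkern-1.9mu D_{(\bm{m})}^{n}}, R))$ becomes the ideal $(t_1^{m_1+1}, \dots, t_n^{m_n+1})$ of those power series supported on exponents $\bm{k}$ with $k_i > m_i$ for some $i$. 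A direct comparison shows the two families of ideals refine each other: on the one hand $I^N\llbracket \bm{t}\rrbracket + (t_1^{N}, \dots, t_n^{N}) \subseteq (I, \bm{t})^N$; on the other hand $(I, \bm{t})^N \subseteq I^j\llbracket \bm{t}\rrbracket + (t_1^{m_1+1}, \dots, t_n^{m_n+1})$ as soon as $N > j + m_1 + \dots + m_n$, since a term $a_{\bm{k}} \bm{t}^{\bm{k}} \in (I,\bm{t})^N$ satisfies $a_{\bm{k}} \in I^{\max(0, N - |\bm{k}|)}$, which lies in $I^j$ whenever $|\bm{k}| \le N - j$, whereas $|\bm{k}| > \sum_i m_i$ forces some $k_i > m_i$ otherwise. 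Hence the continuity hypothesis on $\theta$ is exactly the continuity hypothesis required in corollary~\ref{cor_extension_of_continous_D-measurings_to_completions}.

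With these identifications in place, corollary~\ref{cor_extension_of_continous_D-measurings_to_completions} yields a unique continuous extension to a $D_{I \mkern-1.9mu D^{n}}$-measure $\hat{\Psi}$ on $\hat{R}$, that is, a continuous $\hat{\theta}_{} \colon \hat{R} \to \hat{R}\llbracket \bm{t}\rrbracket$. To see that $\hat{\theta}_{}$ is again a higher derivation I would invoke the unit part of the corollary: taking $\eta_{D_{H \mkern-1.9mu D_{(\bm{m})}^{n}}}$ to select the group-like unit $1_{D_{H \mkern-1.9mu D_{(\bm{m})}^{n}}} = \theta_{}^{(0)} \otimes \dots \otimes \theta_{}^{(0)}$, whose image in $D_{I \mkern-1.9mu D^{n}}$ is $1_{D_{I \mkern-1.9mu D^{n}}}$, the hypothesis $1_{D_{I \mkern-1.9mu D^{n}}}.a = a$ holds and therefore propagates to $\hat{\Psi}(1_{D_{I \mkern-1.9mu D^{n}}} \otimes a) = a$ for all $a \in \hat{R}$, which is exactly the condition making $\hat{\theta}_{}$ an $n$-variate higher derivation.

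Finally, for the iterativity statement I would equip $\Gamma = {\mathbb N}^n$ with the functor $\mu \colon \Gamma \times \Gamma \to \Gamma$, $(\bm{m}, \tilde{\bm{m}}) \mapsto \bm{m} + \tilde{\bm{m}}$, together with the homomorphisms $m_{\bm{m}, \tilde{\bm{m}}} \colon D_{H \mkern-1.9mu D_{(\bm{m})}^{n}} \otimes_C D_{H \mkern-1.9mu D_{(\tilde{\bm{m}})}^{n}} \to D_{H \mkern-1.9mu D_{(\bm{m}+\tilde{\bm{m}})}^{n}}$ obtained by restricting the multiplication $m_{D_{I \mkern-1.9mu D^{n}}}$; this is well defined because the product of a basis element of degree $\le \bm{m}$ and one of degree $\le \tilde{\bm{m}}$ has degree $\le \bm{m} + \tilde{\bm{m}}$, and the compatibility square with $m_{D_{I \mkern-1.9mu D^{n}}}$ then holds by construction. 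If $\theta$ is iterative, then by definition $\Psi$ is a $D_{I \mkern-1.9mu D^{n}}$-module algebra structure on $R$, so the iterativity diagram of proposition~\ref{prop:extension_of_measuring_and_D-module_algebras_to_inverse_limits} commutes, and the last assertion of corollary~\ref{cor_extension_of_continous_D-measurings_to_completions} shows that $\hat{\Psi}$ is a $D_{I \mkern-1.9mu D^{n}}$-module algebra structure on $\hat{R}$, i.e. $\hat{\theta}_{}$ is iterative. I expect the only genuinely technical step to be the verification that the two topologies agree; everything else is a matter of transporting the general statement through the identifications ${}_{C}\mkern-1mu\mathcal{M}(D_{I \mkern-1.9mu D^{n}}, -) \cong (-)\llbracket \bm{t}\rrbracket$.
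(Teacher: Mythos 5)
Your proposal is correct and follows essentially the same route as the paper: the same choices $D = D_{I \mkern-1.9mu D^{n}}$, $\Gamma = {\mathbb N}^n$, $D_{\bm{m}} = D_{H \mkern-1.9mu D_{({\bm{m}})}^{n}}$, $\mathcal{B} = \{I^j\}$ and $\mu(\bm{m},\tilde{\bm{m}}) = \bm{m}+\tilde{\bm{m}}$ with the restricted multiplication, reducing everything to corollary~\ref{cor_extension_of_continous_D-measurings_to_completions}. The only difference is that you spell out the cofinality of the two filtrations on $R\llbracket{\bm{t}}\rrbracket$ in detail, where the paper simply asserts that the ideals $(I^k, t_1^{l_1+1},\dots,t_n^{l_n+1})$ form a base of neighborhoods of $0$ for the $(I,{\bm{t}})$-adic topology.
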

\begin{proof}
	We define $\mathcal{B} \coloneqq \{ I^k \mid k \in {\mathbb N} \}$,  $D \coloneqq D_{I \mkern-1.9mu D^{n}}$, $\Gamma \coloneqq {\mathbb N}^n$, $D_{\bm{l}} \coloneqq D_{H \mkern-1.9mu D_{({\bm{l}})}^{n}}$ for all ${\bm{l}} \in {\mathbb N}^n$, $\mu({\bm{l_1}}, {\bm{l_2}}) \coloneqq {\bm{l_1}} + {\bm{l_2}}$ and let
	\[ m_{{\bm{l_1}}, {\bm{l_2}}} \colon D_{H \mkern-1.9mu D_{({\bm{l_1}})}^{n}} \otimes_C D_{H \mkern-1.9mu D_{({\bm{l_2}})}^{n}} \to D_{H \mkern-1.9mu D_{({\bm{l_1 + l_2}})}^{n}}\]
	be the restriction of the multiplication on $D_{I \mkern-1.9mu D^{n}}$.
	Identifying ${}_{C}\mkern-1mu\mathcal{M}(D_{I \mkern-1.9mu D^{n}},R)$ with $R\llbracket {\bm{t}} \rrbracket$, the ideal $({}_{C}\mkern-1mu\mathcal{M}(D,I^k),\Kernel({}_{C}\mkern-1mu\mathcal{M}(D,R) \to {}_{C}\mkern-1mu\mathcal{M}(D_{\bm{l}},R)))$ corresponds to the ideal $(I^k, t_{1}^{l_{1} + 1}, \dots, t_{n}^{l_{n} + 1})$ for all $k \in {\mathbb N}$ and ${\bm{l}} \in {\mathbb N}^n$, which form a base of neighborhoods of $0$ of the linear topological ring $R\llbracket {\bm{t}} \rrbracket$.
	Since $\theta_{}$ is continuous, there exists for every $k \in {\mathbb N}$ and ${\bm{l}} \in {\mathbb N}^n$ an $\ell \in {\mathbb N}$ such that $\theta_{}(I^\ell) \subseteq (I^k, t_{1}^{l_{1} + 1}, \dots, t_{n}^{l_{n} + 1})$.
	So the claim follows from corollary~\ref{cor_extension_of_continous_D-measurings_to_completions}.
\end{proof}

\begin{ex}
\label{ex:iterative_derivation_with_respect_to_variables}
Let $R$ be a commutative $C$-algebra and $n \in {\mathbb N}$.
On $R[{\bm{x}}]$, where ${\bm{x}} = (x_1, \dots, x_n)$, there is an $n$-variate iterative derivation $\theta_{{\bm{x}}} \colon R[{\bm{x}}] \to R[{\bm{x}}]\llbracket {\bm{w}} \rrbracket$ (with ${\bm{w}} \coloneqq (w_{1}, \dots, w_{n})$) defined by $\theta_{{\bm{x}}}(x_i) = x_i + w_i$ for $i=1, \dots, n$.
This $n$-variate iterative derivation extends uniquely to $R({\bm{x}})$ (cf. \cite[Theorem 27.2]{Matsumura:1989} or \cite[Proposition 1.2.2]{Heiderich:2010}) and by corollary~\ref{cor:continous_iterative_derivations_extend_to_completions} it uniquely extends to $R\llbracket {\bm{x}} \rrbracket$.
We denote these extensions (and their extensions to formally étale extensions) again by $\theta_{{\bm{x}}}$.
\end{ex}

\subsection{Simple and Artinian simple module algebras}
\begin{defn}
	Let $D$ be a $C$-bialgebra.
	\begin{enumerate}
		\item
		A \emph{simple commutative $D$-module algebra} is a commutative $D$-module algebra $(A, \Psi_{A})$ that has no non-trivial $D$-stable ideals, i.e. ideals $I \unlhd A$ such that $\Psi_{A}(D \otimes I) \subseteq I$.

		\item
		An \emph{Artinian simple commutative $D$-module algebra} is a simple commutative $D$-module algebra $A$ that is Artinian as a ring.
	\end{enumerate}
\end{defn}

\begin{defn}
	Let $D$ be a $C$-bialgebra.
	\begin{enumerate}
		\item If $L | K$ is an extension of Artinian simple commutative $D$-module algebras and $B$ is a subset of $L$, then we denote by $K \langle B \rangle$ the smallest Artinian simple $D$-module subalgebra of $L$ that contains $K$ and $B$.
		The extension $L | K$ is \emph{finitely generated} as extension of Artinian simple commutative $D$-module algebras if there exists a finite subset $B$ of $L$ such that $L = K \langle B \rangle$.

		\item If $S | R$ is an extension of commutative $D$-module algebras and $B$ is a subset of $S$, then we denote by $R \{ B \}_{D}$ (or also by $R \{ B \}_{\Psi}$, $R \{ B \}_{ \rho_{} }$ or $R \{ B \}$ if there is no risk of confusion) the smallest $D$-module subalgebra of $S$ that contains $R$ and $B$.
	\end{enumerate}
\end{defn}

\begin{lem}
\label{lem:constants_of_simple_module_algebras_are_fields_and_linear_disjointness}
	Let $D$ be a $C$-bialgebra and $(A, \Psi_{A})$ be a simple commutative $D$-module algebra.
	Then ${A}^{\Psi_{A}}$ is a field and for every $C$-module $V$ and every $\Psi_{V} \in {}_{C}\mkern-1mu\mathcal{M}(A \# D \otimes_{C} V, V)$ the natural homomorphism
	\[ A \otimes_{A^{\Psi_{A}}} V^{\tilde{\Psi}_{V}} \to V \]
	is injective, where $\tilde{\Psi}_{V} \colon D \otimes_{C} V \to V$ is the restriction of $\Psi_{V}$.
\end{lem}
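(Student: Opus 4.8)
The plan is to handle the two assertions separately; both reduce, via the simplicity of $A$, to the device of producing a nonzero $D$-stable ideal of $A$ and invoking the absence of nontrivial such ideals. Write $k \coloneqq A^{\Psi_A}$. The measuring conditions of proposition~\ref{prop_characterization_of_measuring} show at once that $k$ is a $C$-subalgebra of $A$. For the first assertion, fix a nonzero $c \in k$ and consider the ideal $cA \unlhd A$. Using condition~\ref{eqn_prop_characterization_of_measuring_concrete} of proposition~\ref{prop_characterization_of_measuring} together with $\Psi_A(d_{(1)} \otimes c) = \varepsilon_D(d_{(1)}) c$ and the counit axiom, one computes $\Psi_A(d \otimes ca) = \sum_{(d)} \Psi_A(d_{(1)} \otimes c)\Psi_A(d_{(2)} \otimes a) = c\,\Psi_A(d \otimes a) \in cA$, so $cA$ is $D$-stable. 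As $c \neq 0$ and $A$ is simple, $cA = A$, whence $c$ is a unit; applying $\Psi_A(d \otimes -)$ to $1_A = c c^{-1}$ and using that $c$ is constant gives $\Psi_A(d \otimes c^{-1}) = \varepsilon_D(d) c^{-1}$, i.e. $c^{-1} \in k$. Thus $k$ is a field.

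For the second assertion write $W \coloneqq V^{\tilde\Psi_V}$, a $k$-vector space. Since $A \otimes_k W$ is free over $A$ on any $k$-basis of $W$, injectivity of $A \otimes_k W \to V$ is equivalent to the statement that whenever $w_1, \dots, w_n \in W$ are linearly independent over $k$ and $a_1, \dots, a_n \in A$ satisfy $\sum_{i=1}^n a_i w_i = 0$ in $V$, then all $a_i = 0$. The $A$- and $D$-actions on $V$ arise from the inclusions $A \hookrightarrow A \# D$ and $D \hookrightarrow A \# D$, and because $V$ is an $A \# D$-module they satisfy the smash-product compatibility $d.(a v) = \sum_{(d)} (d_{(1)}.a)(d_{(2)}.v)$ for $d \in D$, $a \in A$, $v \in V$.

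I would argue by contradiction: suppose a nontrivial relation exists and choose one with $n$ minimal, so that all $a_i \neq 0$. Consider the set $\mathfrak{a} \coloneqq \{\, b_1 \in A \mid \exists\, b_2, \dots, b_n \in A \text{ with } b_1 w_1 + \sum_{i=2}^n b_i w_i = 0 \,\}$, an ideal of $A$ containing $a_1 \neq 0$. The key computation is that $\mathfrak{a}$ is $D$-stable: applying $\tilde\Psi_V(d \otimes -)$ to a witnessing relation $\sum_i b_i w_i = 0$, using the compatibility above, the identity $d_{(2)}.w_i = \varepsilon_D(d_{(2)}) w_i$ (as $w_i \in W$), and the counit axiom, yields $\sum_i (d.b_i) w_i = 0$, so $d.b_1 \in \mathfrak{a}$. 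By simplicity $\mathfrak{a} = A$, and after rescaling we may assume $a_1 = 1_A$.

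It remains to shorten the relation. Applying $d.$ to $\sum_i a_i w_i = 0$ gives $\sum_i (d.a_i) w_i = 0$, and subtracting $\varepsilon_D(d)$ times the original relation gives $\sum_{i=1}^n (d.a_i - \varepsilon_D(d) a_i) w_i = 0$; since $a_1 = 1_A$ is constant the $i=1$ term vanishes, leaving a relation among $w_2, \dots, w_n$ of length $\le n-1$. By minimality this relation is trivial, so $d.a_i = \varepsilon_D(d) a_i$ for all $d$ and all $i \ge 2$, i.e. each $a_i \in k$. Then $w_1 + \sum_{i \ge 2} a_i w_i = 0$ is a nontrivial $k$-linear relation among the $w_i$, contradicting their independence. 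The main obstacle is precisely the $D$-stability of $\mathfrak{a}$: this is the one place where the $A \# D$-module structure on $V$ enters, and it requires unwinding the smash-product compatibility together with the coalgebra counit identity; the remainder is a standard Artin-style minimal-counterexample argument.
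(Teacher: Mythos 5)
Your proof is correct: the paper itself only cites \cite[Corollary 3.2]{AmanoMasuoka:2005} and observes that the argument given there does not use cocommutativity, and what you have written is precisely that standard argument --- for the field property, the $D$-stable ideal $cA$; for injectivity, a minimal-length relation $\sum_i a_i w_i = 0$, the $D$-stable ideal of possible leading coefficients, normalization to $a_1 = 1_A$, and the shortening step showing all $a_i$ are constants. The only point worth making explicit is that you (correctly, and as intended) read $\Psi_V$ as a genuine $A \# D$-module structure rather than a bare $C$-linear map, since the smash-product compatibility $d.(av) = \sum_{(d)} (d_{(1)}.a)(d_{(2)}.v)$ is exactly what makes the ideal of leading coefficients $D$-stable.
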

\begin{proof}
	Amano and Masuoka prove this in \cite[Corollary 3.2]{AmanoMasuoka:2005} under the hyphothesis that $D$ is cocommutative and their proof holds also if $D$ is not cocommutative.
\end{proof}

\begin{lem}
	\label{lem_simple_linear_disjointness_for_constants}
	Let $A$ be a commutative $C$-algebra, $B$ be a commutative $A$-algebra and $D$ be a cocommutative $C$-bialgebra.
	Then ${}_{C}\mkern-1mu\mathcal{M}(D,A)$ and $ \rho_{0} (B)$ are linearly disjoint over $ \rho_{0} (A)$ as subalgebras of ${}_{C}\mkern-1mu\mathcal{M}(D,B)$.
\end{lem}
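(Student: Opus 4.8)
The plan is to unwind the statement into the concrete assertion that a finite family in $\rho_{0}(B)$ which is linearly independent over $\rho_{0}(A)$ remains linearly independent over ${}_{C}\mkern-1mu\mathcal{M}(D,A)$, all three being regarded as sub-$C$-algebras of ${}_{C}\mkern-1mu\mathcal{M}(D,B)$ via the structure homomorphism $g\colon A\to B$ and the induced homomorphism ${}_{C}\mkern-1mu\mathcal{M}(D,g)\colon {}_{C}\mkern-1mu\mathcal{M}(D,A)\to{}_{C}\mkern-1mu\mathcal{M}(D,B)$. First I would note that, since $\rho_{0}$ is a homomorphism of $C$-algebras, an element of $\rho_{0}(A)$ pushed into ${}_{C}\mkern-1mu\mathcal{M}(D,B)$ is exactly $\rho_{0}(g(a))$; hence, by the injectivity of $\rho_{0}$, the condition ``$\rho_{0}(b_{1}),\dots,\rho_{0}(b_{n})$ are linearly independent over $\rho_{0}(A)$'' is equivalent to ``$b_{1},\dots,b_{n}$ are linearly independent over $g(A)$ in $B$'' (using the formula for $\rho_{0}$ from example~\ref{ex_duals_and_GTHs}~\ref{ex_duals_and_GTHs_trivial_MAS}).

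The single computation on which everything rests is that convolution by a constant is pointwise scaling. For $u\in{}_{C}\mkern-1mu\mathcal{M}(D,B)$ and $b\in B$ one has, using $\rho_{0}(b)(d)=\varepsilon_{D}(d)b$ together with the counit axiom $\sum_{(d)}d_{(1)}\varepsilon_{D}(d_{(2)})=d$,
\[ (u\cdot\rho_{0}(b))(d)=\sum_{(d)}u(d_{(1)})\,\varepsilon_{D}(d_{(2)})\,b=u(d)\,b\qquad\text{for all }d\in D. \]
In other words $\rho_{0}(b)$ acts on maps $D\to B$ as the scalar $b$, coordinatewise. I would record this identity first: it is the only place where the coalgebra structure enters, and it is what makes the problem collapse.

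With this in hand the argument is immediate. Suppose $u_{1},\dots,u_{n}\in{}_{C}\mkern-1mu\mathcal{M}(D,A)$ satisfy $\sum_{i}{}_{C}\mkern-1mu\mathcal{M}(D,g)(u_{i})\cdot\rho_{0}(b_{i})=0$ in ${}_{C}\mkern-1mu\mathcal{M}(D,B)$, where $b_{1},\dots,b_{n}$ are linearly independent over $g(A)$. Evaluating at an arbitrary $d\in D$ and applying the displayed identity to $u={}_{C}\mkern-1mu\mathcal{M}(D,g)(u_{i})=g\circ u_{i}$ gives $\sum_{i}g(u_{i}(d))\,b_{i}=0$ in $B$. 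Each coefficient $g(u_{i}(d))$ lies in $g(A)$, so the $g(A)$-linear independence of the $b_{i}$ forces $g(u_{i}(d))=0$ for all $i$; as $d$ was arbitrary, ${}_{C}\mkern-1mu\mathcal{M}(D,g)(u_{i})=g\circ u_{i}=0$ for every $i$. This is exactly the asserted linear disjointness.

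The point worth flagging is that there is no deep obstacle, but one must be careful with the formulation. The naive reading that the multiplication map ${}_{C}\mkern-1mu\mathcal{M}(D,A)\otimes_{\rho_{0}(A)}\rho_{0}(B)\to{}_{C}\mkern-1mu\mathcal{M}(D,B)$ is injective is too strong over a general base: already for $C$ a field, $D=C[x]$, a non-coherent $A$ and a suitable torsion extension $B$ the kernel is nonzero, and the independence statement also fails if one exchanges the roles of the two subalgebras. What survives, and what is needed in the sequel, is precisely the one-sided independence proved above, in which convolution forces the produced coefficients $g(u_{i}(d))$ to land in $g(A)$. The counit identity is exactly what guarantees this, so the whole content of the lemma is concentrated in that one elementary computation.
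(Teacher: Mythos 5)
Your proof is correct and is essentially the paper's own argument: the paper likewise evaluates the relation $\sum_i f_i\, \rho_{0} (b_i)=0$ at an arbitrary $d\in D$, using that convolution with $ \rho_{0} (b)$ is pointwise multiplication by $b$ (the counit axiom), and concludes from the linear independence of the $b_i$. Your additional care in distinguishing $A$ from its image $g(A)$ in $B$, and in reading the conclusion as the vanishing of $g\circ u_i$ rather than of $u_i$ itself, only makes the same one-sided independence argument slightly more precise than the paper's terser version.
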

\begin{proof}
	Let $b_1, \dots, b_n \in B$ be linearly independent over $A$.
	If $f_1, \dots, f_n \in {}_{C}\mkern-1mu\mathcal{M}(D,A)$ are such that $\sum_{i=1}^n  f_i  \rho_{0} (b_i) = 0$, it follows $0 = \sum_{i=1}^n  (f_i  \rho_{0} (b_i))(d) = \sum_{i=1}^n f_i(d) b_i$ for all $d \in D$.
	Since $b_1, \dots, b_n$ are linearly independent over $A$, we obtain $f_i(d) = 0$ for all $d \in D$ and all $i=1, \dots, n$.
\end{proof}

The following results are generalizations of results from
\cite[Section 2]{AmanoMasuoka:2005}, the main difference being that in our case $D$ is not necessarily a Hopf algebra.
Instead, until the end of this section we make the following
\begin{ass}
\label{ass:bialgebra_D}
	Let $C$ be a field, $G$ a monoid and $D^1$ be a pointed irreducible cocommutative Hopf algebra of Birkhoff-Witt type over $C$, i.e. $D^{1}$ is of the form $B(U)$, where $U$ is a $C$-vector space and $B(U)$ is the cofree pointed irreducible cocommutative coalgebra on $U$ as defined in \cite[pp. 261-271]{Sweedler:1969}, such that $D^1$ is a $C G$-module algebra, where $C G$ is the $C$-bialgebra defined in example~\ref{ex:module_algebra_structures}~\ref{itm:ex:module_algebra_structures:group_like_bialgebras_over_a_monoid}.
	We define $D \coloneqq D^1 \# C G$ and for every submonoid $G'$ of $G$ we define $D(G') \coloneqq D^1 \# C G'$ (cf. \cite[Section 7.2]{Sweedler:1969}).
\end{ass}

\begin{prop}
\label{prop:statements_on_Noetherian_simple_D-module_algebras}
	Let $R$ be a commutative $D$-module algebra that is Noetherian as a ring and simple as $D$-module algebra.
	We further assume that each $g \in G$ acts as an injective endomorphism on $R$, i.e. the endomorphism $R \to R, a \mapsto \Psi(g \otimes a)$ is injective (this is the case for example if $G$ is a group or if $G$ is commutative, cf. lemma~\ref{lem:elements_of_commutative_monoids_act_as_injective_endomorphisms_on_Noetherian_simple_D-module_algebras}).
	We consider the induced action of $G$ on $\Omega(R)$ from the right defined by $P.g \coloneqq g^{-1}(P)$ for $g \in G$ and $P \in \Omega(R)$, where $g^{-1}(P)$ denotes the inverse image of $P$ under $g$, and denote by $G_{P} \coloneqq \{ g \in G \mid g^{-1}(P) = P \}$ the stabilizer of $P$ in $G$.
	Then the following hold:
	\begin{enumerate}
		\item \label{minimal_prime_ideals_D1_stable}
		Every $P \in \Omega(R)$ is $D^{1}$-stable, so that $R/P$ becomes a $D(G_{P})$-module domain.
		As $D(G_{\Omega(R)})$-module algebra $R/P$ is simple, where $G_{\Omega(R)} \coloneqq \cap_{Q \in \Omega(R)} G_Q$.
		\item \label{itm:prop:statements_on_Noetherian_simple_D-module_algebras:action_transitive}
		  The right action of $G$ on $\Omega(R)$ is transitive.
		  Therefore the stabilizers $G_{P}$ are conjugate to each other.
		\item \label{product_decomposition}
		There is a natural isomorphism $R \cong \prod_{P \in \Omega(R)} R/P$.
	\end{enumerate}
\end{prop}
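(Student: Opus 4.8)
The plan is to separate the ``infinitesimal'' content, carried by the connected Hopf algebra $D^1$, from the ``permutation'' content, carried by the group-like part $CG$. The cornerstone is part~\ref{minimal_prime_ideals_D1_stable}. I would first show that each $P \in \Omega(R)$ is $D^1$-stable, exploiting that $D^1 = B(U)$ is pointed irreducible: by example~\ref{ex_duals_and_GTHs} the algebra ${}_{C}\mkern-1mu\mathcal{M}(D^1,R)$ is a power series ring and the associated homomorphism $ \rho_{}  \colon R \to {}_{C}\mkern-1mu\mathcal{M}(D^1,R)$ has ``constant term'' $\operatorname{ev}_{1_{D^1}} \circ  \rho_{}  = \id_R$ (the unique group-like of $D^1$ is $1$). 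Fix $P$ and let $\pi \colon R \to R/P$ be the projection onto the domain $R/P$; the induced algebra homomorphism $\pi_* \coloneqq {}_{C}\mkern-1mu\mathcal{M}(D^1,\pi)$ (here $D^1$ is $C$-free, so the functor is exact) has kernel $J$ consisting of the maps into $P$, and ${}_{C}\mkern-1mu\mathcal{M}(D^1,R)/J \cong {}_{C}\mkern-1mu\mathcal{M}(D^1,R/P)$ is a power series ring over a domain, hence a domain; thus $J$ is prime. Then $Q \coloneqq  \rho_{} ^{-1}(J)$ is prime, and evaluating at $1_{D^1}$ gives $Q \subseteq P$, so minimality forces $Q = P$. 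This is exactly $ \rho_{} (P) \subseteq J$, i.e. $d.P \subseteq P$ for all $d \in D^1$.

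With $D^1$-stability in hand the induced action of $G$ on $\Omega(R)$ lets me descend structure: for $g \in G_P$ one has $g^{-1}(P)=P$, hence $g(P) \subseteq P$, so $g$ acts on $R/P$, and combined with the $D^1$-action this makes $R/P$ a $D(G_P)$-module domain. Next I would observe that $N(R)=\bigcap_{P \in \Omega(R)}P$ is $D^1$-stable (an intersection of $D^1$-stable ideals) and $G$-stable (for every $P' \in \Omega(R)$ one has $N(R)\subseteq g^{-1}(P')$, whence $g(N(R))\subseteq P'$), hence $D$-stable and proper, so $N(R)=0$ by simplicity and $R$ is reduced. The simplicity of $R/P$ as a $D(G_{\Omega(R)})$-module algebra I would then prove by contradiction: a proper nonzero $D(G_{\Omega(R)})$-stable ideal of $R/P$ lifts to $R$ and, symmetrised over the finite $G$-orbit of $P$, produces a proper nonzero $D$-stable ideal of $R$.

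For transitivity (part~\ref{itm:prop:statements_on_Noetherian_simple_D-module_algebras:action_transitive}) I would take a $G$-orbit $O \subseteq \Omega(R)$ and set $I \coloneqq \bigcap_{P \in O}P$. It is $D^1$-stable, and $G$-stable because for $g \in G$ and $P \in O$ we have $P.g = g^{-1}(P) \in O$, so $I \subseteq g^{-1}(P)$ and therefore $g(I)\subseteq P$ for every $P \in O$, giving $g(I)\subseteq I$. Thus $I$ is a proper $D$-stable ideal, so $I=0$ by simplicity; since the decomposition $\bigcap_{P \in \Omega(R)}P=0$ of a reduced Noetherian ring is irredundant, $O=\Omega(R)$ and the action is transitive. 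Conjugacy of the stabilisers then follows from the usual orbit–stabiliser relation.

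For the product decomposition (part~\ref{product_decomposition}) it suffices, $R$ being reduced, to prove the minimal primes pairwise comaximal. I would introduce the crossing ideal $\mathfrak{c} \coloneqq \bigcap_{P \neq Q}(P+Q)$, the intersection over distinct pairs in $\Omega(R)$. Each $P+Q$ is $D^1$-stable, so $\mathfrak{c}$ is; and since each injective $g$ induces a permutation of the finite set $\Omega(R)$, the inclusion $\mathfrak{c}\subseteq g^{-1}(P)+g^{-1}(Q)$ gives $g(\mathfrak{c})\subseteq P+Q$ for every pair, so $\mathfrak{c}$ is $G$-stable, hence $D$-stable. On the other hand, for each $P$ an element of $\bigcap_{Q \neq P}Q$ not lying in $P$ (irredundance) lies in every $P'+Q'$, so $\mathfrak{c}\not\subseteq P$; thus $\mathfrak{c}\neq 0$, and simplicity forces $\mathfrak{c}=R$, i.e. $P+Q=R$ for all $P\neq Q$. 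The Chinese remainder theorem then yields $R \cong \prod_{P \in \Omega(R)}R/P$. I expect the main obstacle to be part~\ref{minimal_prime_ideals_D1_stable}: everything downstream is essentially formal once the minimal primes are $D^1$-stable, and that step is precisely where the hypothesis that $D^1$ is pointed irreducible of Birkhoff–Witt type is indispensable, as it is what makes ${}_{C}\mkern-1mu\mathcal{M}(D^1,-)$ preserve domains and pins the constant term to the identity; no analogous statement holds in the group-like directions, which is exactly why $G$ may genuinely permute the primes.
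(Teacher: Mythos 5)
Your argument is correct, and for parts \ref{minimal_prime_ideals_D1_stable} and \ref{itm:prop:statements_on_Noetherian_simple_D-module_algebras:action_transitive} it coincides with the paper's: the $D^1$-stability of a minimal prime $P$ is obtained in both cases by pulling back ${}_{C}\mkern-1mu\mathcal{M}(D^1,P)$ along $\rho$, using that ${}_{C}\mkern-1mu\mathcal{M}(D^1,R/P)$ is a power series ring over a domain (you make explicit the role of the Birkhoff--Witt hypothesis and of $\operatorname{ev}_{1_{D^1}}\circ\rho=\id_R$, which the paper leaves implicit), and transitivity follows in both cases by intersecting the primes in an orbit and invoking simplicity of $R$; you also supply a proof that $R$ is reduced, which the paper simply asserts. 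The genuine divergence is in part \ref{product_decomposition}: the paper deduces $P+Q=R$ for $P\neq Q$ from the $D(G_{\Omega(R)})$-simplicity of $R/P$ established in part \ref{minimal_prime_ideals_D1_stable}, whereas you form the symmetrized ideal $\mathfrak{c}=\bigcap_{P\neq Q}(P+Q)$, check that it is $D$-stable and not contained in any minimal prime, and apply simplicity of $R$ itself. Your route is arguably cleaner, since it bypasses the coset-representative argument for $D(G_{\Omega(R)})$-simplicity (the one delicate step in the paper's proof, and the one you only sketch with the word ``symmetrised''). The one point you should make explicit is that each $g\in G$ induces a \emph{bijection} of $\Omega(R)$: your crossing-ideal argument needs $(g^{-1}(P),g^{-1}(Q))$ to be again a pair of \emph{distinct} minimal primes, and this follows from $\bigcap_{Q\in\Omega(R)}g^{-1}(Q)=g^{-1}(0)=0$ together with the finiteness of $\Omega(R)$, exactly as at the start of the paper's proof.
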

\begin{proof}
	The proof is similar to the one of \cite[Proposition 2.4]{AmanoMasuoka:2005}, but some modifications are necessary due to the fact that in general the $g \in G$ do not act as automorphisms on $R$.
	We first note that the set $\Omega(R)$ of minimal prime ideals of $R$ is finite.
	Since $R$ is reduced, $\cap_{Q \in \Omega(R)} Q = 0$ and since all $g \in G$ act as injective endomorphisms on $R$, it follows
	\[\cap_{Q \in \Omega(R)} g^{-1}(Q) = \cap_{Q \in \Omega(R)} Q = 0. \]
	Therefore $g^{-1}(Q) \in \Omega(R)$ for all $Q \in \Omega(R)$ and all $g \in G$.
	Since $[G:G_Q] \leq | \Omega(R) |$, the index $[G:G_Q]$ is finite for all $Q \in \Omega(R)$ and hence is $[G : G_{\Omega(R)}]$.

	To prove \ref{minimal_prime_ideals_D1_stable}, we denote by $ \rho_{1}  \colon R \to {}_{C}\mkern-1mu\mathcal{M}(D^{1},R)$ the homomorphism of $C$-algebras associated to the restriction of the $D$-module algebra structure to a $D^1$-module algebra structure.
	Then the inverse image $ \rho_{1} ^{-1}({}_{C}\mkern-1mu\mathcal{M}(D^1,P))$ is a $D^1$-stable prime ideal of $R$ that is included in $P$.
	Since $P$ is a minimal prime ideal, it follows $P =  \rho_{1} ^{-1}( {}_{C}\mkern-1mu\mathcal{M}(D^1,P) )$ and so $P$ is $D^1$-stable.

	Let $J $ be a proper $D(G_{\Omega(R)})$-stable ideal of $R$ containing $P$.
	Let $1 = g_1, \dots, g_\nu$ be a system of representators of the cosets $G/G_{\Omega(R)}$.
	Then $\cap_{i=1}^\nu g_i^{-1}(J)$ is $D$-stable
	and thus equal to $(0)$, since $R$ is a simple $D$-module algebra.
	Since $P$ is a prime ideal, there exists an $i \in \{1, \dots, \nu\}$ such that $g_i^{-1}(J) \subseteq P$.
	Together with our assumption $P \subseteq J$ we obtain $g_i^{-1}(P) \subseteq g_i^{-1}(J) \subseteq P$.
	Since $P$ is a minimal prime ideal, it follows $g_i^{-1}(P) = P $, i.e. $i = 1$ and thus $J = P$.
	Therefore $R/P$ is a simple $D(G_{\Omega(R)})$-module algebra.

	To prove \ref{itm:prop:statements_on_Noetherian_simple_D-module_algebras:action_transitive}, let $P \in \Omega(R)$.
	By part \ref{minimal_prime_ideals_D1_stable}, $g^{-1}(P)$ is $D^{1}$-stable for all $g \in G$.
	Therefore the intersection $\cap_{g \in G/G_P} g^{-1}(P)$ is $D$-stable and thus
	we have
	\[ \cap_{Q \in \Omega(R)} Q = (0) = \cap_{g \in G/G_P} g^{-1}(P), \]
	since
	$R$ is a simple $D$-module algebra.
	It follows $\{g^{-1}(P) \mid g \in G/G_P \} = \Omega(R)$.

	Now we prove \ref{product_decomposition}:
	By the previous we have a bijection $G/G_P \cong \Omega(R)$ for every $P \in \Omega(R)$.
	If $Q$ and $Q'$ are distinct elements of $\Omega(R)$, then $(Q \subsetneq) Q + Q' = R$, since $R/Q$ is a simple $D(G_{\Omega(R)})$-module algebra.
	Hence $R \to \prod_{Q \in \Omega(R)} R/Q \cong \prod_{g \in G/G_P} R/{g^{-1}(P)}$ is an isomorphism by the Chinese remainder theorem.
\end{proof}

\begin{lem}
\label{lem:elements_of_commutative_monoids_act_as_injective_endomorphisms_on_Noetherian_simple_D-module_algebras}
	If the monoid $G$ is commutative and $R$ is a commutative $D$-module algebra, then the kernel $\Kernel{g}$ of each $g \in G$ is a $D$-stable ideal of $R$.
	In particular, if $R$ is a simple commutative $D$-module algebra, then all $g \in G$ act as injective endomorphisms of $R$.
\end{lem}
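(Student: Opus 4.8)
The plan is to reduce $D$-stability of $\Kernel{g}$ to stability under the two kinds of generators of $D = D^1 \# CG$, namely the elements $d \# 1$ with $d \in D^1$ and the elements $1 \# h$ with $h \in G$, and to verify each of these using the smash-product multiplication. Write $g.a = \Psi(g \otimes a)$ for the action of $G$ on $R$ and $h.d$ for the action of $G$ on $D^1$ coming from the $CG$-module algebra structure assumed in assumption~\ref{ass:bialgebra_D}. The embeddings $D^1 \hookrightarrow D$, $d \mapsto d \# 1$, and $CG \hookrightarrow D$, $h \mapsto 1 \# h$, are algebra homomorphisms, so that $d.a \coloneqq (d \# 1).a$ and $h.a \coloneqq (1 \# h).a$ recover the restricted $D^1$- and $CG$-module algebra structures. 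Since each $h \in G$ is group-like, the smash-product multiplication gives $(1 \# h)(d \# 1) = (h.d) \# h = ((h.d)\# 1)(1 \# h)$, and applying both sides to $a \in R$ (using that $R$ is a $D$-module) yields the key commutation relation
\[ h.(d.a) = (h.d).(h.a) \qquad \text{for all } d \in D^1,\ h \in G,\ a \in R. \]

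First I would show that $\Kernel{g}$ is stable under $D^1$. If $a \in \Kernel{g}$, i.e. $g.a = 0$, then the commutation relation with $h = g$ gives $g.(d.a) = (g.d).(g.a) = (g.d).0 = 0$, because $g.d \in D^1$ and the map $\Psi(g.d \otimes -)$ is $C$-linear; hence $d.a \in \Kernel{g}$. Note that this step does not use commutativity of $G$. Second I would show stability under $G$: for $a \in \Kernel{g}$ and $h \in G$ we compute $g.(h.a) = (gh).a = (hg).a = h.(g.a) = h.0 = 0$, where the second equality is the only place commutativity of $G$ enters and the last uses that $h$ acts as a $C$-algebra endomorphism. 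Thus $h.a \in \Kernel{g}$. Since an arbitrary generator acts by $(d\#h).a = d.(h.a)$ and $\Kernel{g}$ is an ideal stable under both $D^1$ and $G$, it follows that $\Psi(D \otimes \Kernel{g}) \subseteq \Kernel{g}$, i.e. $\Kernel{g}$ is $D$-stable.

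For the final assertion, if $R$ is moreover simple then its only $D$-stable ideals are $(0)$ and $R$, so $\Kernel{g} \in \{(0), R\}$. Because $g$ acts as a $C$-algebra endomorphism we have $g.1_R = \varepsilon_D(g)\,1_R = 1_R \neq 0$ (as $R \neq 0$ for a simple module algebra), so $1_R \notin \Kernel{g}$ and therefore $\Kernel{g} \neq R$. Hence $\Kernel{g} = (0)$, i.e. $g$ acts injectively.

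I expect the only genuinely delicate point to be the verification of the commutation relation $h.(d.a) = (h.d).(h.a)$ directly from the definition of the smash product; everything else is formal. In particular it is worth emphasizing that $D^1$-stability of $\Kernel{g}$ is automatic, so that commutativity of $G$ enters solely to guarantee that $\Kernel{g}$ is preserved by the group-like part $CG$ of $D$.
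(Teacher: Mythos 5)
Your proposal is correct and follows essentially the same route as the paper: the paper commutes $1\#g$ past a general element $\sum_i d_i\# h_i$ of $D$ in one step via $(1\#g)(\sum_i d_i\#h_i)=(\sum_i g(d_i)\#h_i)(1\#g)$, using commutativity of $G$ exactly where you do, whereas you split this into the $D^1$-part and the $CG$-part before recombining. Your added observation that $D^1$-stability of $\Kernel g$ needs no commutativity of $G$ is a nice refinement but does not change the argument.
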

\begin{proof}
	We note that for all $g \in G$ and $\sum_{i=1}^n d_i \# h_i \in D^1 \# C G$ we have
	\[
	(1 \# g)(\sum_{i=1}^n d_i \# h_i)
	= \sum_{i=1}^n g(d_i) \# g h_i
	= \sum_{i=1}^n g(d_i) \# h_i g
	= (\sum_{i=1}^n g(d_i) \# h_i) (1 \# g).
	\]
	So for $a \in \Kernel{g}$ we have
	$(1 \# g)(\sum_{i=1}^n d_i \# h_i) (a) = (\sum_{i=1}^n g(d_i) \# h_i) (1 \# g)(a) = 0$, and thus $(\sum_{i=1}^n d_i \# h_i) (a) \in \Kernel(g)$.
	Therefore $\Kernel(g)$ is a $D$-stable ideal of $R$.
\end{proof}

\begin{prop}
\label{prop:equivalent_conditions_to_Artinian_for_a_Noetherian_simple_D-module_algebra_with_injective_group_like_elements}
	Let $R$ be a Noetherian simple commutative $D$-module algebra.
	If furthermore each $g \in G$ acts as an injective endomorphism on $R$,
	then the following are equivalent:
	\begin{enumerate}
		\item $R$ is total,
		\item $R/P$ is a field for every $P \in \Omega(R)$,
		\item The Krull dimension of $R$ is zero and
		\item $R$ is Artinian as a ring.
	\end{enumerate}
	In this case every $R$-module is projective.

\end{prop}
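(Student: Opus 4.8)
The plan is to prove the chain of equivalences and the final projectivity assertion by combining the product decomposition from Proposition~\ref{prop:statements_on_Noetherian_simple_D-module_algebras} with standard commutative-algebra characterizations of Artinian rings, exploiting throughout that the $R/P$ are simple module algebras and hence fields by Lemma~\ref{lem:constants_of_simple_module_algebras_are_fields_and_linear_disjointness}.

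First I would establish that $R/P$ is a field for every $P \in \Omega(R)$ \emph{unconditionally}, not merely as one of the equivalent conditions. By part~\ref{minimal_prime_ideals_D1_stable} of Proposition~\ref{prop:statements_on_Noetherian_simple_D-module_algebras}, each $R/P$ is a simple $D(G_{\Omega(R)})$-module domain, and by Lemma~\ref{lem:constants_of_simple_module_algebras_are_fields_and_linear_disjointness} its constants form a field; but more directly, being a simple module algebra that is a domain, its field of fractions inherits a module-algebra structure and the kernel argument shows $R/P$ has no proper nonzero stable ideals, forcing it to equal its own fraction field. Granting this, the implication (2)$\Rightarrow$(4) is immediate: by part~\ref{product_decomposition} of Proposition~\ref{prop:statements_on_Noetherian_simple_D-module_algebras} we have $R \cong \prod_{P \in \Omega(R)} R/P$, a finite product of fields (finite because $\Omega(R)$ is finite, as noted in that proof), and a finite product of fields is Artinian.

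With the product decomposition in hand the remaining equivalences are formal. For (4)$\Rightarrow$(3) I would invoke the standard fact that an Artinian ring has Krull dimension zero. For (3)$\Rightarrow$(2): if $\dim R = 0$ then every prime is minimal, so each $P \in \Omega(R)$ is maximal and $R/P$ is a field. For (1)$\Leftrightarrow$(2): since $N(R) = \cap_{P \in \Omega(R)} P = 0$ (as $R$ is reduced, being simple), $R$ embeds into its total quotient ring $\TotalQuot(R) = \prod_P \operatorname{Frac}(R/P)$; the ring $R$ is total (i.e.\ equals $\TotalQuot(R)$) precisely when each localization at a minimal prime is already a field, which by the decomposition $R \cong \prod_P R/P$ happens exactly when each $R/P$ is a field. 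The cleanest route is to run everything through the isomorphism $R \cong \prod_{P} R/P$, reducing each statement to the corresponding statement about the finitely many factors $R/P$.

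The main obstacle is verifying that $R/P$ being a \emph{domain} that is simple as a $D(G_{\Omega(R)})$-module algebra forces it to be a \emph{field}, i.e.\ the heart of (2); this is the one place where the module-algebra hypotheses, rather than pure commutative algebra, do the work. The subtlety is that $R/P$ need not be Noetherian a priori in the generality required, so one must argue using stability of ideals under the $D^1$-action together with the injective action of $G_{\Omega(R)}$: given a nonzero $\bar a \in R/P$, the set of denominators one can invert must be shown to generate a stable ideal, which by simplicity is everything. Once this field property is secured, the rest is the routine equivalence ``Artinian $\Leftrightarrow$ dimension zero $\Leftrightarrow$ finite product of fields'' for reduced Noetherian rings, and the final projectivity claim follows since over a finite product of fields every module is a product of vector spaces and hence free on each factor, so projective.
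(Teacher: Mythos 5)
Your chain of equivalences is sound and follows essentially the same route as the paper: everything is funneled through the isomorphism $R \cong \prod_{P \in \Omega(R)} R/P$ of proposition~\ref{prop:statements_on_Noetherian_simple_D-module_algebras}~\ref{product_decomposition}, each condition is translated into the corresponding condition on the factors $R/P$, and ``Noetherian of Krull dimension zero $\Leftrightarrow$ Artinian'' closes the loop; the projectivity claim is likewise handled as in the paper (semi-simplicity of a finite product of fields). Had you stopped there, the proof would be correct.

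The genuine problem is your opening move: you claim to prove \emph{unconditionally} that $R/P$ is a field for every $P \in \Omega(R)$, and you identify this as ``the heart'' of the argument. This misreads the statement. The proposition asserts only that conditions (1)--(4) are \emph{equivalent}, not that they hold; and in fact they need not hold. For instance, with $\mathbb{Q} \subseteq C$, $G$ trivial and $D = D_{der}$, the polynomial ring $C[x]$ with the derivation $d/dx$ is a Noetherian simple commutative $D$-module algebra (any nonzero differential ideal contains a nonzero element of minimal degree, whose derivative must vanish, forcing the ideal to contain a unit), yet it is a domain of Krull dimension $1$ that is neither Artinian nor total. This also pinpoints why your proposed argument for the unconditional claim fails: ``$R/P$ is a domain with no nonzero proper stable ideals, hence equals its fraction field'' is a non sequitur --- simplicity as a module algebra says nothing about ordinary ideals, and $C[x]$ is exactly a simple module-algebra domain that is not a field. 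Fortunately this false step is logically superfluous: your implications (2)$\Rightarrow$(4)$\Rightarrow$(3)$\Rightarrow$(2) and (1)$\Leftrightarrow$(2) each use the relevant condition only as a hypothesis, so deleting the unconditional claim (and the paragraph agonizing over it) leaves a correct proof that matches the paper's.
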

\begin{proof}
	Since $R \cong \prod_{P \in \Omega(R)} R/P$ by proposition~\ref{prop:statements_on_Noetherian_simple_D-module_algebras} \ref{product_decomposition}, $R$ is total if and only if $R/P$ is total for every $P \in \Omega(R)$ and since $R/P$ is an integral domain, this is the case if and only if $R/P$ is a field.
	The Krull dimension of $R$ is zero if and only if each $P \in \Omega(R)$ is a maximal ideal, i.e. if $R/P$ is a field.
	Since $R$ is Noetherian, it is Artinian if and only if its Krull dimension is $0$ (cf. \cite[Chapitre IV, \S 2.5, Proposition 9]{Bourbaki:1985:AlgebreCommutative:Ch1-4}).

	In this case $R$ is semi-simple and so every $R$-module is projective (cf. \cite[Chapitre I, \S 2.4]{Bourbaki:1985:AlgebreCommutative:Ch1-4}).
\end{proof}
If $G$ is a group, then we recover the first part of \cite[Corollary 2.5]{AmanoMasuoka:2005} from proposition~\ref{prop:equivalent_conditions_to_Artinian_for_a_Noetherian_simple_D-module_algebra_with_injective_group_like_elements}.

\section{Galois theory of Artinian simple module algebras}

\subsection{Notation}
\label{ssec:notation}

As in assumption~\ref{ass:bialgebra_D}, let $C$ be a field, $G$ be a monoid
and let $C G$ be the $C$-bialgebra defined in example~\ref{ex:module_algebra_structures}~\ref{itm:ex:module_algebra_structures:group_like_bialgebras_over_a_monoid}.
Let further $D^1$ be a pointed irreducible cocommutative Hopf algebra of Birkhoff-Witt type over $C$ such that $D^1$ is a $C G$-module algebra.
We define $D \coloneqq D^1 \# C G$.

\begin{rmk}
These conditions allow for example the choices $D_{end}$, $D_{aut}$ or $D_{I \mkern-1.9mu D^{n}}$ for $D$ (cf. example~\ref{ex:module_algebra_structures}).
The cocommutative pointed irreducible commutative Hopf algebra $D_{der}$ is of Birkhoff-Witt type if ${\mathbb Q} \subseteq C$.
Another example is provided by the Hopf algebra constructed by Masuoka in \cite{Masuoka:cross_bialgebra:preprint}, describing iterative $q$-difference operators as introduced by Hardouin in \cite{Hardouin:2010}.
\end{rmk}

Let $L | K$ be an extension of Artinian simple commutative $D$-module algebras such that the elements $g \in G \subseteq D$ act as injective endomorphisms on $L$.
By proposition~\ref{prop:equivalent_conditions_to_Artinian_for_a_Noetherian_simple_D-module_algebra_with_injective_group_like_elements}, $L$ and $K$ have Krull dimension $0$, are total and the monoid $G$ acts transitively on $\Omega(L)$ and $\Omega(K)$ (cf. proposition~\ref{prop:statements_on_Noetherian_simple_D-module_algebras} \ref{itm:prop:statements_on_Noetherian_simple_D-module_algebras:action_transitive}).
If $Q \in \Omega(L)$ and $P \in \Omega(K)$, then we denote by $G_Q$ and $G_{P}$ the stabilizers in $G$ of $Q$ and $P$, respectively.
It follows $\Omega(L) \cong G/G_Q$, $\Omega(K) \cong G/G_P$ and by proposition~\ref{prop:statements_on_Noetherian_simple_D-module_algebras} we have
\[ K \cong \prod_{P \in \Omega(P)} K/P
\quad \text{and} \quad
L \cong \prod_{Q \in \Omega(L)} L/Q.\]
We assume that for every $Q \in \Omega(L)$ the field extension $L/Q$ over $K/(K \cap Q)$ is separable and finitely generated and that its degree of transcendence is the same for every $Q$, say $n$.
Let ${\bm{u}}^Q = (u^Q_1, \dots, u^Q_n)$ be a separating transcendence basis of it and let $\theta_{{\bm{u}}^Q} \colon L/Q \to L/Q\llbracket {\bm{w}} \rrbracket$ be the associated $n$-variate iterative derivation of $L/Q$ over $K/(Q \cap K)$ defined by $\theta_{{\bm{u}}^Q}(u^Q_i) = u^Q_i + w_i$ for all $i=1, \dots, n$ (cf. example~\ref{ex:iterative_derivation_with_respect_to_variables}).
By corollary~\ref{cor:inverse_systems_of_D-module_algebras_have_limits}, there exists a unique $n$-variate iterative derivation
\begin{equation}
\label{eq:iterative_derivation_wrt_u}
	\theta_{{\bm{u}}} \colon L \to L \llbracket {\bm{w}} \rrbracket
\end{equation}
on the product $L \cong \prod_{Q \in \Omega(L)} L/Q$ over $K$ such that the projections to all factors $L/Q$ are iterative differential homomorphisms.
We extend $\theta_{{\bm{u}}}$ to ${}_{C}\mkern-1mu\mathcal{M}(D,L)$ as in \eqref{eqn_module_algebra_structure_on_images} and by abuse of notation we denote this $n$-variate iterative derivation again by $\theta_{{\bm{u}}}$.
	By \cite[Chapitre 0, 21.7.4]{EGA4-1_IHES:1964}, $L/Q$ is formally étale over $(K/(Q \cap K))({\bm{u}}^Q)$ for all  $Q \in \Omega(L)$.
	We define $u_{i} \coloneqq (u_{i}^Q)_{Q \in \Omega(L)} \in L$ for all $i \in \{1, \dots, n\}$ and ${\bm{u}} \coloneqq (u_{1}, \dots, u_{n})$.
	Then $L$ is formally étale over
	$\prod_{Q \in \Omega(L)} (K/(Q \cap K))({\bm{u}}^Q)$.
	We denote the latter by $K({\bm{u}})$.

\subsection{The Galois hull $\mathcal{L} | \mathcal{K}$ of an extension $L | K$ of Artinian simple $D$-module algebras}
\label{ssec:extension_Lcal_Kcal}

\begin{lem}
	If ${\bm{u^Q}} = (u^Q_1, \dots, u^Q_n)$ and ${\bm{v^Q}} = (v^Q_1, \dots, v^Q_n)$ are separating transcendence bases of $L/Q$ over $K/(Q \cap K)$ for all $Q \in \Omega(L)$ and $\theta_{{\bm{u}}}$ and $\theta_{{\bm{v}}}$ are the associated $n$-variate iterative derivations on $L$ over $K$ as defined in~\eqref{eq:iterative_derivation_wrt_u}, then there exists an automorphism $\varphi$ of the $L$-algebra $L\llbracket {\bm{w}} \rrbracket$ such that $\theta_{{\bm{v}}} = \varphi \circ \theta_{{\bm{u}}}$ and the iterative differential subalgebras of ${}_{C}\mkern-1mu\mathcal{M}(D,L)$ generated by $ \rho_{} (L)$ and $ \rho_{0} (L)$, once with respect to $\theta_{{\bm{u}}}$, once with respect to $\theta_{{\bm{v}}}$, are equal, i.e.
	\[  \rho_{0} (L) \{  \rho_{} (L) \}_{\theta_{{\bm{u}}}} =  \rho_{0} (L) \{  \rho_{} (L) \}_{\theta_{{\bm{v}}}}. \]
\end{lem}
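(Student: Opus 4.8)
The plan is to prove the two assertions in turn: first produce the automorphism $\varphi$, then deduce the equality of the iterative differential subalgebras from it. For the existence of $\varphi$ I would argue factor by factor, using the decomposition $L \cong \prod_{Q \in \Omega(L)} L/Q$. This gives a finite product decomposition $L\llbracket {\bm w} \rrbracket \cong \prod_{Q \in \Omega(L)} (L/Q)\llbracket {\bm w} \rrbracket$ under which $\theta_{\bm u}$ and $\theta_{\bm v}$ become the factorwise iterative derivations $\theta_{{\bm u}^Q}$ and $\theta_{{\bm v}^Q}$. On the factor $Q$ I would define $\varphi^Q$ to be the continuous $L/Q$-algebra endomorphism of $(L/Q)\llbracket {\bm w} \rrbracket$ determined by
\[ \varphi^Q(w_i) \coloneqq \theta_{{\bm v}^Q}(u_i^Q) - u_i^Q \qquad (i = 1, \dots, n), \]
which is legitimate since each right-hand side lies in the ideal $({\bm w})$.

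Next I would verify that $\varphi^Q$ is an automorphism and that $\theta_{{\bm v}^Q} = \varphi^Q \circ \theta_{{\bm u}^Q}$. The linear part of $\varphi^Q(w_i)$ has coefficient matrix $\bigl(\theta_{{\bm v}^Q}^{(e_j)}(u_i^Q)\bigr)_{i,j}$ (with $e_j \in {\mathbb N}^n$ the $j$-th unit vector), which is the Jacobian of ${\bm u}^Q$ with respect to ${\bm v}^Q$; since ${\bm u}^Q$ and ${\bm v}^Q$ are both separating transcendence bases, the families $(d u_i^Q)_i$ and $(d v_j^Q)_j$ are each a basis of the free rank-$n$ module of Kähler differentials $\Omega_{(L/Q) \mid (K/(Q \cap K))}$, so this Jacobian is invertible and $\varphi^Q$ is an automorphism by the formal inverse function theorem. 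Applying the desired identity to $u_i^Q$ gives $\varphi^Q(\theta_{{\bm u}^Q}(u_i^Q)) = \varphi^Q(u_i^Q + w_i) = u_i^Q + \varphi^Q(w_i) = \theta_{{\bm v}^Q}(u_i^Q)$, so $\theta_{{\bm v}^Q}$ and $\varphi^Q \circ \theta_{{\bm u}^Q}$ agree on each $u_i^Q$ and on $K/(Q \cap K)$, hence on $(K/(Q \cap K))({\bm u}^Q)$; as both reduce to the identity modulo $({\bm w})$ and $L/Q$ is formally étale over $(K/(Q \cap K))({\bm u}^Q)$, the uniqueness of lifts forces them to coincide on all of $L/Q$. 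Setting $\varphi \coloneqq \prod_Q \varphi^Q$ then yields the required $L$-algebra automorphism with $\theta_{\bm v} = \varphi \circ \theta_{\bm u}$.

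For the second assertion I would transport this identity to ${}_{C}\mkern-1mu\mathcal{M}(D,L)$. Writing $\vartheta_{\bm u}, \vartheta_{\bm v}$ for the extensions of $\theta_{\bm u}, \theta_{\bm v}$ and identifying ${}_{C}\mkern-1mu\mathcal{M}(D, L\llbracket {\bm w} \rrbracket) \cong {}_{C}\mkern-1mu\mathcal{M}(D,L)\llbracket {\bm w} \rrbracket$, the functoriality of the construction in lemma~\ref{lem_internal_D-module_algebra_structure_on_ModCDA} gives $\vartheta_{\bm u} = {}_{C}\mkern-1mu\mathcal{M}(D, \theta_{\bm u})$ and hence $\vartheta_{\bm v} = {}_{C}\mkern-1mu\mathcal{M}(D, \varphi \circ \theta_{\bm u}) = \Phi \circ \vartheta_{\bm u}$, where $\Phi \coloneqq {}_{C}\mkern-1mu\mathcal{M}(D, \varphi)$ is a continuous automorphism of ${}_{C}\mkern-1mu\mathcal{M}(D,L)\llbracket {\bm w} \rrbracket$ fixing ${}_{C}\mkern-1mu\mathcal{M}(D,L)$ pointwise (because $\varphi$ fixes $L$). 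By symmetry it suffices to show that the $\theta_{\bm u}$-stable subalgebra $S_{\bm u} \coloneqq  \rho_{0} (L)\{ \rho_{} (L)\}_{\theta_{\bm u}}$ is also $\theta_{\bm v}$-stable. The key point is that $\Phi(w_i)$ is the image of $\theta_{\bm v}(u_i) - u_i \in L\llbracket {\bm w} \rrbracket$ under $ \rho_{0} $ applied coefficientwise, so its coefficients $ \rho_{0} (\theta_{\bm v}^{(\bm k)}(u_i))$ all lie in $ \rho_{0} (L) \subseteq S_{\bm u}$; thus $\Phi$ maps $S_{\bm u}\llbracket {\bm w} \rrbracket$ into itself, whence $\vartheta_{\bm v}(S_{\bm u}) = \Phi(\vartheta_{\bm u}(S_{\bm u})) \subseteq \Phi(S_{\bm u}\llbracket {\bm w} \rrbracket) \subseteq S_{\bm u}\llbracket {\bm w} \rrbracket$. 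Hence $S_{\bm u}$ is $\theta_{\bm v}$-stable and contains $ \rho_{0} (L)$ and $ \rho_{} (L)$, so $S_{\bm v} \subseteq S_{\bm u}$; exchanging the roles of ${\bm u}$ and ${\bm v}$ (replacing $\varphi$ by $\varphi^{-1}$) gives the reverse inclusion, and therefore equality.

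The principal difficulty is conceptual rather than computational and lies in the second assertion: one must observe that, although $\theta_{\bm u}$ and $\theta_{\bm v}$ differ by the nontrivial automorphism $\varphi$, this automorphism only reshuffles the variables $w_i$ with coefficients that are \emph{constants}, i.e. elements of $ \rho_{0} (L)$, and these already belong to $S_{\bm u}$ by construction; this is exactly what makes the stable subalgebra independent of the chosen basis. The invertibility of the Jacobian from the separating-transcendence-basis hypothesis and the uniqueness of the lift from formal étaleness are the two technical points behind the first assertion, but both are routine once the factorwise reduction is in place.
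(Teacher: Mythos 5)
Your proof is correct and follows essentially the same route as the paper: a factor-by-factor construction of $\varphi_Q$ via the invertibility of the Jacobian $(\partial u_i^Q/\partial v_j^Q)$ and the formal inverse function theorem, with formal étaleness of $L/Q$ over the rational function field forcing $\theta_{{\bm{v}}^Q}=\varphi_Q\circ\theta_{{\bm{u}}^Q}$ on all of $L/Q$ (the paper sets up the substitution in the mirror direction, inverting $w_i\mapsto\theta_{{\bm{u}}^Q}(v_i^Q)-v_i^Q$, but this is the same argument). For the equality of the two subalgebras the paper only states that it is a direct consequence, and your observation that ${}_{C}\mkern-1mu\mathcal{M}(D,\varphi)$ moves the variables $w_i$ by power series with coefficients in $ \rho_{0} (L)\subseteq\mathcal{L}$ is exactly the right justification.
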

\begin{proof}
We first show that for each $Q \in \Omega(L)$ there exists an automorphism $\varphi_{Q}$ of the $L/Q$-algebra $(L/Q)\llbracket {\bm{w}} \rrbracket$ such that $\theta_{{\bm{v^Q}}} = \varphi_{Q} \circ \theta_{{\bm{u^Q}}}$.
In fact, the set of formal power series $(\theta_{{\bm{u^Q}}}(v_{i}^Q) - v_{i}^Q)_{i=1,\dots,n}$ has an invertible Jacobian matrix $(\theta_{{\bm{u^Q}}}^{({\bm{\delta}}_j)}(v_{i}^Q))_{i,j=1}^{n}$
and by the formal inverse function theorem (cf. for example \cite[A.4]{Hazewinkel:1978}) there exists a continuous homomorphism of $L/Q$-algebras $\varphi_{Q} \colon (L/Q)\llbracket {\bm{w}} \rrbracket \to (L/Q)\llbracket {\bm{w}} \rrbracket$ such that $\varphi_{Q}( \theta_{{\bm{u^Q}}}(v_{i}^Q) - v_{i}^Q) = w_{i} = \theta_{{\bm{v^Q}}}(v_{i}^Q) - v_{i}^Q$ and thus also $\varphi_{Q}(\theta_{{\bm{u^Q}}}(v_{i}^Q)) = \theta_{{\bm{v}}^Q}(v_{i}^Q)$ for all $i=1,\dots, n$, so that $\varphi_{Q} \circ \theta_{{\bm{u}}^Q}$ and $\theta_{{\bm{v}}^Q}$ coincide on $K(v_{1}^Q, \dots, v_{n}^Q)$.
Since $L/Q$ is a formally étale extension of $(K/(Q \cap K))(v_{1}^Q, \dots, v_{n}^Q)$, they coincide on $L/Q$ as well (cf. \cite[Theorem 27.2]{Matsumura:1989} or \cite[Proposition 1.2.2]{Heiderich:2010}).
The automorphisms $(\varphi_{Q})_{Q \in \Omega(L)}$ induce an automorphism $\varphi$ of the $L$-algebra $L\llbracket {\bm{w}} \rrbracket$ such that $\theta_{{\bm{v}}} = \varphi \circ \theta_{{\bm{u}}}$.
The last claim is a direct consequence of this.
\end{proof}

We define
\begin{align*}
	\mathcal{L} \coloneqq  \rho_{0} (L) \{  \rho_{} (L) \}_{\theta_{{\bm{u}}}}
	\qquad
	\text{and}
	\qquad
	\mathcal{K} \coloneqq  \rho_{0} (L) [  \rho_{} (K) ]
\end{align*}
as the iterative differential subalgebras of $({}_{C}\mkern-1mu\mathcal{M}(D,L), \theta_{{\bm{u}}})$ generated by $ \rho_{0} (L)$ and $ \rho_{} (L)$ and by $ \rho_{0} (L)$ and $ \rho_{} (K)$, respectively.
Both are $D \otimes_C D_{I \mkern-1.9mu D^{n}}$-module subalgebras of $({}_{C}\mkern-1mu\mathcal{M}(D,L),  \rho_{int}  \otimes \theta_{{\bm{u}}})$ and by the previous lemma $\mathcal{L}$ does not depend on the choice of ${\bm{u}}$.\footnote{Originally Umemura defined $\mathcal{L}$ to be a field, but later the definition was changed. The definition we give here coincides with the definition in \cite{Morikawa:2009} when $D = D_{end}$ and $L$ and $K$ are fields.}
We call the extension $\mathcal{L}| \mathcal{K}$ the \emph{Galois hull} of $L | K$.

\begin{ex}
\label{ex:function_with_constant_derivation}
Let $K$ be a field and $L=K(y)$ be a purely transcendental extension field of $K$.
We define an iterative derivation $\theta_{y}$ on $L$ as the $K$-linear homomorphism $\theta_{y} \colon L \to L\llbracket t \rrbracket$ fulfilling $\theta_{y}(y) \coloneqq y + t$ (cf. example~\ref{ex:iterative_derivation_with_respect_to_variables}).
We chose $u=y$ as separating transcendence basis of $L$ over $K$.
Then $\mathcal{L} = L[ K(y+t) ]$ and $\mathcal{K} = L$.
\end{ex}

\begin{ex}
\label{ex:exponential_function}
Let $K$ be a field containing ${\mathbb Q}$ and $L=K(y)$ be a purely transcendental extension field of $K$.
We define an iterative derivation $\theta_{y}$ on $L$ as the $K$-linear homomorphism $\theta_{} \colon L \to L\llbracket t \rrbracket$ fulfilling $\theta_{}(y) \coloneqq y \exp(t)$.
We chose $u=y$ as separating transcendence basis of $L$ over $K$.
Then $\mathcal{L} = L[K(y \exp(t))]$ and $\mathcal{K} = L$.
\end{ex}

\subsection{The Umemura functor{}}
For every commutative $L$-algebra $A$ we consider the tensor product
\[ {}_{C}\mkern-1mu\mathcal{M}(D,L) \otimes_L A\llbracket {\bm{w}} \rrbracket, \]
where the $L$-algebra structures on ${}_{C}\mkern-1mu\mathcal{M}(D,L)$ and on $A \llbracket {\bm{w}} \rrbracket$ are given by $ \rho_{0}  \colon L \to {}_{C}\mkern-1mu\mathcal{M}(D,L)$ (cf. example~\ref{ex_duals_and_GTHs} \ref{ex_duals_and_GTHs_trivial_MAS}) and by $\theta_{{\bm{u}}} \colon L \to L\llbracket {\bm{w}} \rrbracket$ (cf. \eqref{eq:iterative_derivation_wrt_u}), respectively.
This tensor product carries a $D \otimes_{C} D_{I \mkern-1.9mu D^{n}}$-module algebra structure $ \rho_{}  \otimes \theta_{}$,
induced by
\begin{align}
	\xymatrix{
		( {}_{C}\mkern-1mu\mathcal{M}(D,L), {\Psi_{\mkern-0.7mu int}} \otimes \theta_{{\bm{u}}}) & (L, {\Psi_{0}} \otimes \theta_{{\bm{u}}}) \ar[l]_-{ \rho_{0} } \ar[r]^-{\theta_{{\bm{u}}}} & (A \llbracket {\bm{w}} \rrbracket, {\Psi_{0}} \otimes \theta_{{\bm{w}}})
	}
	\label{eqn_lem_module_algebra_structure_on_ModCDL_tensor_Aw_diag1}
\end{align}
using proposition \ref{prop_D-mod_alg_structure_on_tensor_product}.
The homomorphism
\[  \rho_{}  \otimes \theta_{} \colon {}_{C}\mkern-1mu\mathcal{M}(D,L) \otimes_L A\llbracket {\bm{w}} \rrbracket \to {}_{C}\mkern-1mu\mathcal{M}(D \otimes_C D_{I \mkern-1.9mu D^{n}}, {}_{C}\mkern-1mu\mathcal{M}(D,L) \otimes_L A\llbracket {\bm{w}} \rrbracket)\]
is continuous with respect to the $({\bm{w}})$-adic topology on ${}_{C}\mkern-1mu\mathcal{M}(D,L) \otimes_L A\llbracket {\bm{w}} \rrbracket$ and the $({\bm{w}}, {\bm{T}})$-adic topology on
\[{}_{C}\mkern-1mu\mathcal{M}(D,{}_{C}\mkern-1mu\mathcal{M}(D,L) \otimes_L A\llbracket {\bm{w}} \rrbracket)\llbracket {\bm{T}} \rrbracket \cong {}_{C}\mkern-1mu\mathcal{M}(D \otimes_C D_{I \mkern-1.9mu D^{n}}, {}_{C}\mkern-1mu\mathcal{M}(D,L) \otimes_L A\llbracket {\bm{w}} \rrbracket).\]
Therefore, this $D \otimes_{C} D_{I \mkern-1.9mu D^{n}}$-module algebra structure extends to the completion
\[ {}_{C}\mkern-1mu\mathcal{M}(D,L) \hat{\otimes}_L A\llbracket {\bm{w}} \rrbracket \]
with respect to
the $({\bm{w}})$-adic topology by corollary~\ref{cor_extension_of_continous_D-measurings_to_completions}.
The algebras $\mathcal{L} \hat{\otimes}_L A\llbracket {\bm{w}} \rrbracket$ and $\mathcal{K} \hat{\otimes}_L A\llbracket {\bm{w}} \rrbracket$ are $D \otimes_C D_{I \mkern-1.9mu D^{n}}$-module subalgebras.

\begin{defn}
	The \emph{Umemura functor{}} of $L | K$ is the functor
	\[\operatorname{Ume}(L | K) \colon {\mathsf{CAlg}_{L}} \to \mathsf{Grp}, \index{$\operatorname{Ume}(L | K)$} \]
	where for each commutative $L$-algebra $A$ we define $\operatorname{Ume}(L | K)(A)$ to be the group of automorphisms $\varphi$ of the of $D \otimes_C D_{I \mkern-1.9mu D^{n}}$-module algebra $\mathcal{L} \hat{\otimes}_L A\llbracket {\bm{w}} \rrbracket$ that leave $\mathcal{K} \hat{\otimes}_L A\llbracket {\bm{w}} \rrbracket$ fixed and make the diagram
\begin{align*}
	\xymatrixcolsep{4.0pc}
	\xymatrixrowsep{3.0pc}
	\xymatrix{
		\mathcal{L} \hat{\otimes}_L A\llbracket {\bm{w}}\rrbracket  \ar@{->}[d]^{\varphi} \ar@{->}[rd]^{\id_\mathcal{L} \hat{\otimes} \pi_A \llbracket {\bm{w}}\rrbracket } & \\
		\mathcal{L} \hat{\otimes}_L A\llbracket {\bm{w}}\rrbracket  \ar@{->}[r]^(0.4){\id_\mathcal{L} \hat{\otimes} \pi_A \llbracket {\bm{w}}\rrbracket } & \mathcal{L} \hat{\otimes}_L (A/N(A))\llbracket {\bm{w}}\rrbracket,
	}
	\end{align*}
	commutative.
	If $\lambda \colon A \to B$ is a homomorphism of commutative $L$-algebras, we define
	\[ \operatorname{Ume}(L | K)(\lambda) \colon \operatorname{Ume}(L | K)(A) \to \operatorname{Ume}(L | K)(B) \]
	by sending $\varphi \in \operatorname{Ume}(L | K)(A)$ to $\varphi \hat{\otimes}_{A\llbracket {\bm{w}} \rrbracket} \id_{B\llbracket {\bm{w}} \rrbracket}$, where we consider $B\llbracket {\bm{w}} \rrbracket$ as $A\llbracket {\bm{w}} \rrbracket$-algebra via the homomorphism $\lambda\llbracket {\bm{w}} \rrbracket \colon A\llbracket {\bm{w}} \rrbracket \to B\llbracket {\bm{w}} \rrbracket$.
	\label{def_InfGal}
\end{defn}

\subsection{Lie-Ritt functors}
Umemura defines Lie-Ritt functors in \cite{Umemura:1996b}.
Here we use a slightly changed version of them.

\begin{notation}
In this subsection let $L$ be an arbitrary commutative ring and $A$ be a commutative $L$-algebra.
\end{notation}
The \emph{infinitesimal coordinate transformations of $n$ variables over $A$}
\[
	\bm{\Gamma}_{\mkern-3mu nL}(\mkern-1mu A) \coloneqq \{ (\varphi_1, \dots, \varphi_{n}) \in (A\llbracket {\bm{w}}\rrbracket )^n \mid \varphi_i \equiv w_i \mod N(A)\llbracket {\bm{w}}\rrbracket \; \forall \; i \in \{ 1, \dots , n \} \},
\]
where $n \in {\mathbb N}$ and where we denote by ${\bm{w}}$ the tuple $(w_1, \dots, w_n)$, form a group with multiplication given by composition, i.e. if $\Phi = (\varphi_1, \dots, \varphi_n), \Psi \in \bm{\Gamma}_{\mkern-3mu nL}(A)$, then $\Phi \cdot \Psi$ is defined as $(\varphi_1(\Psi), \dots, \varphi_n(\Psi))$ (cf. \cite[Chapitre IV, \S 4.3 and \S 4.7]{Bourbaki:1981:Algebre:Ch4-7}).

We equip the ring $A\llbracket {\bm{w}}\rrbracket$ with the $n$-variate iterative derivation $\theta_{}$ over $A$ with respect to ${\bm{w}}$
(cf. example \ref{ex:iterative_derivation_with_respect_to_variables}) and we extend it to
\[
	A \llbracket {\bm{w}} \rrbracket \{\{{\bm{Y}}\}\}
	\coloneqq A\llbracket w_1, \dots, w_n\rrbracket \llbracket Y_i^{({\bm{k}})} \mid {i \in \{ 1,\dots,n \}, {\bm{k}} \in {\mathbb N}^n} \rrbracket.
\]
with elements $(Y_i^{({\bm{k}})})_{i \in \{1, \dots, n\}, {\bm{k}} \in {\mathbb N}^n}$, algebraically independent over $A\llbracket {\bm{w}} \rrbracket$, by
\[
	\theta^{({\bm{l}})} \left( Y_i^{({\bm{k}})} \right) \coloneqq \binom{{\bm{k}} + {\bm{l}}}{{\bm{k}}} Y_i^{({\bm{k}}+{\bm{l}})}
\]
for all ${\bm{l}} \in {\mathbb N}^n$.
We denote by $A\llbracket {\bm{w}} \rrbracket \{ A\llbracket {\bm{Y}} \rrbracket \}_{\theta_{}}$ the iterative differential subring of $A \llbracket {\bm{w}} \rrbracket \{\{{\bm{Y}}\}\}$ generated by $A\llbracket {\bm{w}}, {\bm{Y}} \rrbracket$, where ${\bm{Y}}$ denotes the $n$-tuple $(Y_{1}^{({\bm{0}})}, \dots, Y_{n}^{({\bm{0}})})$.
For $F \in A\llbracket {\bm{w}} \rrbracket \{ A\llbracket {\bm{Y}} \rrbracket \}_{\theta_{}}$ and $\Phi = (\varphi_1, \dots, \varphi_n) \in \bm{\Gamma}_{\mkern-3mu nL}(A)$ we denote by $F_{|{\bm{Y}} = \Phi}$ the image of $F$ under the homomorphism of $A\llbracket {\bm{w}} \rrbracket$-algebras $A\llbracket {\bm{w}} \rrbracket \{ A\llbracket {\bm{Y}} \rrbracket \}_{\theta_{}} \to A\llbracket {\bm{w}} \rrbracket$ that sends $Y_i^{({\bm{k}})}$ to $\theta^{({\bm{k}})}(\varphi_i)$.

\begin{defn}
A \emph{Lie-Ritt functor} over $L$ is a group functor $G$ on the category of commutative $L$-algebras such that there exists an $n \in {\mathbb N}$ and an ideal $I \unlhd L\llbracket {\bm{w}} \rrbracket \{ L\llbracket {\bm{Y}} \rrbracket \}_{\theta_{}}$ such that $G(A) \cong Z(I)(A)$ for every commutative $L$-algebra $A$, where
\[ Z(I)(A) \coloneqq \{ \Phi \in \bm{\Gamma}_{\mkern-3mu nL}(A) \mid F_{|{\bm{Y}} = \Phi} = 0 \; \text{ for all } \; F \in I \}. \]
\end{defn}
\begin{rmk}
In \cite[Definition 1.8]{Umemura:1996b} Lie-Ritt functors over $L$ are defined using ideals in $L \llbracket {\bm{w}} \rrbracket \{\{ {\bm{Y}} \}\}$ instead of $L\llbracket {\bm{w}} \rrbracket \{ L\llbracket {\bm{Y}} \rrbracket \}_{\theta_{}}$.
Since the term $F_{|{\bm{Y}} = \Phi}$ is not well defined for elements $F \in L \llbracket {\bm{w}} \rrbracket \{\{ {\bm{Y}} \}\}$ in general, we use the above definition instead.
\end{rmk}

\begin{ex}
	\label{ex:additive_formal_group_as_a_Lie-Ritt_functor}
	We define a subgroup functor $G_+$ of $\bm{\Gamma}_{\mkern-3mu 1{\mathbb Z}}$ as
	\[
		G_+(A) \coloneqq \{ a_0 + w \mid a_0 \in N(A) \}
	\]
	for all commutative rings $A$.
	Let $I$ be the ideal in ${\mathbb Z} \llbracket w \rrbracket\{ {\mathbb Z} \llbracket Y \rrbracket \}_{\theta_{}}$ generated by $Y^{(1)} - 1$ and $Y^{(k)}$ for all $k \geq 2$.
	Then $G_+ = Z(I)$, i.e. $G_+$ is a Lie-Ritt functor over ${\mathbb Z}$.
	Furthermore, $G_+$ is isomorphic to the additive formal group scheme $\hat{{\mathbb G}}_a$.
\end{ex}
\begin{proof}
	Let $A$ be a commutative ring.
	An element $\varphi(w) = \sum_{i \geq 0} (a_i + \delta_{i,1}) w^i \in \bm{\Gamma}_{\mkern-3mu 1{\mathbb Z}}(A)$ lies in $Z(I)(A)$ if and only if $1 = \theta_{}^{(1)}(\varphi) = \sum_{i \geq 1} (a_i + \delta_{i,1}) i w^{i-1}$ and for all $k \geq 2$ the equation $0 = \theta_{}^{(k)}(\varphi) = \sum_{i \geq k} \binom{i}{k} (a_i + \delta_{i,1}) w^{i-k}$ holds.
	This is the case if and only if $a_k = 0$ for all $k \geq 1$, i.e. if $\varphi(w) = a_0 + w$ for some $a_0 \in N(A)$.
\end{proof}

\begin{ex}
	\label{ex:multiplicative_formal_group_as_a_Lie-Ritt_functor}
	We define a subgroup functor $G_*$ of $\bm{\Gamma}_{\mkern-3mu 1{\mathbb Z}}$ as
	\[
		G_*(A) \coloneqq \{ (1 + a_1) w \mid a_1 \in N(A) \}
	\]
	for all commutative rings $A$.
	Then $G_*$ is a Lie-Ritt functor over ${\mathbb Z}$ and
	$G_* = Z(I)$, where $I = ( \{ w Y^{(1)} - Y, Y^{(k)} \mid k \geq 2 \})$.
	Furthermore, $G_*$ is isomorphic to the multiplicative formal group scheme ${\hat{\mathbb G}}_m$.
\end{ex}
\begin{proof}
	An element $\varphi(w) = \sum_{i \geq 0} (a_i + \delta_{i,1}) w^i \in \bm{\Gamma}_{\mkern-3mu 1{\mathbb Z}}(A)$ lies in $Z(I)(A)$ if and only if $w \sum_{i \geq 1} i (a_i + \delta_{i,1}) w^{i-1} = \sum_{i \geq 0} (a_i + \delta_{i,1}) w^i$ and $\theta_{}^{(k)}(\sum_{i \geq 0} (a_i + \delta_{i,1}) w^i) = 0$ hold for all $k \geq 2$.
	This is the case if and only if $a_0 = 0$ and $a_k = 0$ for all $k \geq 2$, i.e. if $\varphi(w) \in G_*(A)$.
\end{proof}

\begin{ex}
	\label{ex:multiplicative_formal_group_as_a_Lie-Ritt_functor_for_exponential}
	Let $L$ be a field and $0 \neq y \in L$.
	We define a subgroup functor $\tilde{G}_*$ of $\bm{\Gamma}_{\mkern-3mu 1L}$ as
	\[
		\tilde{G}_*(A) \coloneqq \{ y a_1 + (1 + a_1) w \mid a_0 \in N(A) \}
	\]
	for all commutative $L$-algebras $A$.
	Then $\tilde{G}_*$ is a Lie-Ritt functor over $L$ and
	$\tilde{G}_* = Z(I)$, where
	$I = ( \{ (y+w) Y^{(1)} - Y - y
 , Y^{(k)} \mid k \geq 2 \})$.

	Furthermore, $\tilde{G}_*$ is isomorphic to the Lie-Ritt functor $G_*$ in example \ref{ex:multiplicative_formal_group_as_a_Lie-Ritt_functor} and thus also to the multiplicative formal group scheme ${\hat{\mathbb G}}_m$.
\end{ex}
\begin{proof}
	An element $\varphi(w) = \sum_{i \geq 0} (a_i + \delta_{i,1}) w^i \in \bm{\Gamma}_{\mkern-3mu 1L}(A)$ lies in $Z(I)(A)$ if and only if $(y+w) \sum_{i \geq 1} i (a_i + \delta_{i,1}) w^{i-1} - \sum_{i \in {\mathbb N}} (a_i + \delta_{i,1})w^i - y = 0$ and $\theta_{}^{(k)}(\sum_{i \geq 0} (a_i + \delta_{i,1}) w^i) = 0$ hold for all $k \geq 2$.
	This is the case if and only if
	$a_0 = y a_1$ and $a_k = 0$ for all $k \geq 2$, i.e. if $\varphi(w) \in \tilde{G}_*(A)$.

	An isomorphism $G_* \to \tilde{G}_*$ is given by sending $(1+a_1)w \in G_*(A)$ to  $y a_1 + (1+a_1)w \in \tilde{G}_*(A)$, which is multiplicative since if $(1+b_1)w$ is another element of $G_*(A)$, then the product of $(1+a_1)w$ and $(1+b_1)w$ in $G_*(A)$ is $(1+a_1)(1+b_1)w = (1 + a_1 + b_1 + a_1 b_1)w$, which has as image in $\tilde{G}_*(A)$ the element $y (a_1 + b_1 + a_1 b_1) + (1+a_1 + b_1 + a_1 b_1)w$.
	At the other hand, the product of $y a_1 + (1+a_1)w$ and $y b_1 + (1+b_1)w$ in $\tilde{G}_*(A)$ is
	$y a_1 + (1+a_1) (y b_1 + (1+b_1) w)
	= y(a_1 + b_1 + a_1 b_1) + (1+a_1 +b_1 + a_1 b_1) w$.
\end{proof}

The analogues of example \ref{ex:additive_formal_group_as_a_Lie-Ritt_functor} and \ref{ex:multiplicative_formal_group_as_a_Lie-Ritt_functor} in the setting of Umemura appeared in \cite[Example 1.9(i) and (ii)]{Umemura:1996b}.
Since he assumes ${\mathbb Q} \subseteq L$, it is sufficient for him to use the equation $Y^{(1)} - 1$ and $w Y^{(1)} - Y$ as generators of $I$ in the first and second example, respectively.
In the general case we have to add the equations $Y^{(k)}$ for $k \geq 2$.

\begin{prop}
\label{prop:Lie-Ritt_functors_are_formal_groups}
	Every Lie-Ritt functor over $L$ is isomorphic to a formal group scheme over $L$.
\end{prop}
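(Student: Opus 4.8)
The plan is to realize $Z(I)$ as a filtered colimit of affine schemes along closed immersions, equivalently as $\operatorname{Spf}$ of a complete topological Hopf $L$-algebra, which is what a formal group scheme over $L$ amounts to; since $G \cong Z(I)$ as group functors, the assertion for $G$ then follows. I would first treat the ambient functor $\bm{\Gamma}_{\mkern-3mu nL}$, showing it is a formal group scheme, and then exhibit $Z(I)$ as a closed formal subgroup scheme cut out by the equations coming from $I$.

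For the first step I write an element $\Phi = (\varphi_1, \dots, \varphi_n) \in \bm{\Gamma}_{\mkern-3mu nL}(A)$ as $\varphi_i = w_i + \sum_{\bm{k}} a_{i,\bm{k}} \bm{w}^{\bm{k}}$ with all $a_{i,\bm{k}} \in N(A)$, so that set-theoretically $\bm{\Gamma}_{\mkern-3mu nL}(A) = \prod_{s \in S} N(A)$, where $S = \{1,\dots,n\} \times {\mathbb N}^n$ indexes the coefficients. Since $N(A) = \varinjlim_{m} \operatorname{Hom}_{{\mathsf{CAlg}_{L}}}(L[x]/(x^m), A)$ and any finite family of nilpotents admits a common bound on its nilpotency orders, I obtain
\[ \bm{\Gamma}_{\mkern-3mu nL}(A) = \varinjlim_{(m_s)} \operatorname{Hom}_{{\mathsf{CAlg}_{L}}}\Big(\textstyle\bigotimes_{s \in S} L[X_s]/(X_s^{m_s}),\, A\Big), \]
the colimit running over bounds $(m_s) \colon S \to {\mathbb N}$ directed by $\leq$, with surjective and hence closed-immersion transition maps. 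Thus $\bm{\Gamma}_{\mkern-3mu nL}$ is an ind-affine scheme, namely the formal scheme $\operatorname{Spf}(L\llbracket X_s : s \in S \rrbracket)$; in particular the nilradical functor $A \mapsto N(A)$ is $\hat{{\mathbb G}}_a$. The composition law expresses each coefficient of $\Phi \circ \Psi$ as a power series in the coefficients of $\Phi$ and $\Psi$; these power series evaluate to finite sums on nilpotent arguments and so define a continuous comultiplication on $L\llbracket X_s \rrbracket$, with the inverse furnishing an antipode. Hence $\bm{\Gamma}_{\mkern-3mu nL}$ is a formal group scheme.

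Next I cut out $Z(I)$. For $F \in I$ I substitute $Y_i^{(\bm{k})} \mapsto \theta^{(\bm{k})}(\varphi_i)$ and expand in powers of $\bm{w}$. Each $\theta^{(\bm{k})}$ is $L$-linear with $\theta^{(\bm{k})}(\varphi_i)$ having $\bm{w}^{\bm{m}}$-coefficient $\binom{\bm{m}+\bm{k}}{\bm{k}} a_{i,\bm{m}+\bm{k}}$, and any $F$ is built from $L\llbracket \bm{w}, \bm{Y} \rrbracket$ using only finitely many of the $Y_i^{(\bm{k})}$; a degree count then shows that each coefficient of $F_{|{\bm{Y}} = \Phi}$ is a power series in finitely many of the variables $X_{i,\bm{k}}$ with coefficients in $L$. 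Let $\mathfrak{a}$ be the closed ideal of $L\llbracket X_s \rrbracket$ generated by all these coefficients as $F$ ranges over $I$. Because the $a_{i,\bm{k}}$ are nilpotent, each such power series evaluates to a finite sum, and one checks that $Z(I)(A) = \{ (a_s) \in \bm{\Gamma}_{\mkern-3mu nL}(A) \mid P(a) = 0 \text{ for all } P \in \mathfrak{a} \}$. This is the closed formal subscheme $\operatorname{Spf}(\mathcal{O})$ with $\mathcal{O} = L\llbracket X_s \rrbracket / \mathfrak{a}$.

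Finally, since $Z(I)$ is by hypothesis a subgroup functor of $\bm{\Gamma}_{\mkern-3mu nL}$, the comultiplication, counit and antipode of $L\llbracket X_s \rrbracket$ descend to $\mathcal{O}$ (equivalently $\mathfrak{a}$ is a Hopf ideal), so $\mathcal{O}$ is a complete topological Hopf algebra and $Z(I) = \operatorname{Spf}(\mathcal{O})$ is a formal group scheme; transporting the structure along $G \cong Z(I)$ completes the argument. I expect the main obstacle to be the finiteness bookkeeping that this rests on: one must verify that substituting a general $\Phi$ into the generators of $I$ yields, coefficientwise in $\bm{w}$, well-defined power series in only finitely many coordinates $X_{i,\bm{k}}$, so that $\mathfrak{a}$ is a genuine closed ideal and the loci are honest closed formal subschemes, and, in parallel, that the composition law carries each finite ind-level into another, so that it is a morphism of formal schemes rather than merely a natural transformation of set-valued functors. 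Once these finiteness statements are in place, the descent of the Hopf structure to the closed subgroup $Z(I)$ is purely formal.
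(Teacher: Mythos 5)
Your proposal is correct and follows essentially the same route as the paper: identify $\bm{\Gamma}_{\mkern-3mu nL}$ with the infinite-dimensional formal affine space indexed by $\{1,\dots,n\}\times{\mathbb N}^n$, observe that composition is given by power series satisfying the requisite finite-support condition (the paper phrases this via Hazewinkel's infinite-dimensional formal group laws, you via $\operatorname{Spf}$ of a complete topological Hopf algebra), and then cut out $Z(I)$ as a closed formal subgroup by the coefficientwise equations extracted from $F_{|{\bm{Y}}=\Phi}=0$. The finiteness bookkeeping you flag is glossed over at the same level of detail in the paper's own proof.
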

\begin{proof}
	Let $A$ be a commutative $L$-algebra.
	Then for every $n \in {\mathbb N}$ we have an isomorphism
	\[
		\bm{\Gamma}_{\mkern-3mu nL} \mkern-1mu (\mkern-1.5mu A \mkern-0.5mu) \mkern-1mu
			\to {\widehat{\mathbb A}}^{\{1, \dots, n\} \times {\mathbb N}^n}_{L}\mkern-4mu(\mkern-1.5mu A \mkern-0.5mu),
		\; \left( \sum_{{\bm{k}} \in {\mathbb N}^n} \mkern-5mu (a_{i, {\bm{k}}} \mkern-3mu + \mkern-3mu \delta_{{\bm{k}}, {\bm{\delta}}_{i}}) {\bm{w}}^{\bm{k}} \mkern-3mu \right)_{\mkern-5mu i=1, \dots, n} \mkern-14mu
		\mapsto \mkern-4mu \left( \mkern-1mu a_{i, {\bm{k}}} \mkern-1mu \right)_{(i, {\bm{k}}) \in \{\mkern-1mu 1, \dots, n \mkern-0.5mu\} \mkern-0.5mu \times \mkern-0.5mu {\mathbb N}^n}, \]
	which is natural in $A$.
	It gives rise to an isomorphism from $\bm{\Gamma}_{\mkern-3mu nL}$ to the formal scheme ${\widehat{\mathbb A}}^{\{1, \dots, n\} \times {\mathbb N}^n}_{L}$, which has as $A$-points the set $N(A)^{\{1, \dots,n \} \times {\mathbb N}^{n}}$.

	There exist formal power series $(f_{i,{\bm{l}}})_{i \in \{1,\dots,n\}, {\bm{l}} \in {\mathbb N}^n}$ in variables $u_{j,{\bm{k}}}, v_{j,{\bm{k}}}$ with $j \in \{ 1, \dots, n \}$ and ${\bm{k}} \in {\mathbb N}^n$, coefficients in ${\mathbb Z}$ and constant terms equal to zero defined by
	\[ f_{i,{\bm{l}}}((u_{j,{\bm{k}}}, v_{j,{\bm{k}}})_{(j, {\bm{k}}) \in \{1, \dots, n\} \times {\mathbb N}^n}) \mkern-3mu = \mkern-3mu \sum_{{\bm{k}} \in {\mathbb N}^n} \mkern-3mu v_{i,{\bm{k}}} \mkern-15mu
		\sum_{\genfrac{}{}{0pt}{}{{\bm{l}}_{1,1}, \dots, {\bm{l}}_{1,k_1}, \dots, {\bm{l}}_{n,1}, \dots, {\bm{l}}_{n,k_n} \in {\mathbb N}^n}{\sum_{\mu=1}^n \sum_{\nu = 1}^{k_\mu} {\bm{l}}_{\mu,\nu} = {\bm{l}}}}
		\prod_{\mu=1}^n \prod_{\nu=1}^{k_\mu} u_{\mu, {\bm{l}}_{\mu,\nu}} \]
	such that for all elements
	$\Psi=(\psi_1, \dots, \psi_n)$ and $\Phi = (\varphi_1, \dots, \varphi_n)$ of $\bm{\Gamma}_{\mkern-3mu nL}(A)$ with $\varphi_i = \sum_{{\bm{k}} \in {\mathbb N}^n} a_{i, {\bm{k}}} {\bm{w}}^{\bm{k}}$
	and
	$\psi_i = \sum_{{\bm{k}} \in {\mathbb N}^n} b_{i, {\bm{k}}} {\bm{w}}^{\bm{k}}$ for all $i \in \{1, \dots, n\}$
	we have
	\[\psi_i(\Phi) = \sum_{{\bm{l}} \in {\mathbb N}^n} f_{i,{\bm{l}}}( (a_{j,{\bm{k}}}, b_{j,{\bm{k}}})_{(j, {\bm{k}}) \in \{1, \dots, n\} \times {\mathbb N}^n}) {\bm{w}}^{\bm{l}}.\]
	The formal power series $f_{i,{\bm{j}}}$ have the \emph{monomials have finite support condition} (cf. \cite[Definition 7.2]{Hazewinkel:1979})
	and since the multiplication in $\bm{\Gamma}_{\mkern-3mu nL}(A)$ is associative and unital, with unit the tuple $(w_1, \dots, w_n)$, we see that $f_{i, {\bm{l}}}$ give rise to an (infinite dimensional) formal group law in the sense of Hazewinkel (cf. \cite[Definition 7.5]{Hazewinkel:1979}).
	They also give rise to a morphism
	\[ {\widehat{\mathbb A}}^{\{1, \dots, n\} \times {\mathbb N}^n}_{L} \times {\widehat{\mathbb A}}^{\{1, \dots, n\} \times {\mathbb N}^n}_{L} \to {\widehat{\mathbb A}}^{\{1, \dots, n\} \times {\mathbb N}^n}_{L} \]
	of formal schemes over $L$, which defines a group law on ${\widehat{\mathbb A}}^{\{1, \dots, n\} \times {\mathbb N}^n}_{L}$ such that ${\widehat{\mathbb A}}^{\{1, \dots, n\} \times {\mathbb N}^n}_{L}$ becomes a formal group scheme over $L$, which is isomorphic to the group functor $\bm{\Gamma}_{\mkern-3mu nL}$.

	Let $G$ be an arbitrary Lie-Ritt functor over $L$ and let $I \unlhd L\llbracket{\bm{w}}\rrbracket\{ L\llbracket {\bm{Y}} \rrbracket\}_{\theta_{}}$ be such that $G \cong Z(I)$.
	Let $\Phi = (\varphi_1, \dots, \varphi_n) \in \bm{\Gamma}_{\mkern-3mu nL}(A)$ and $\varphi_i = \sum_{{\bm{k}} \in {\mathbb N}^n} a_{i, {\bm{k}}} {\bm{w}}^{\bm{k}}$ for all $i \in \{1, \dots, n\}$.
	For $h \in I$ the condition $h(\Phi) = 0$ is equivalent to a system of polynomial equations $(h_\lambda)_{\lambda \in \Lambda_h}$ among the coefficients $a_{i,{\bm{k}}}$.
	Thus, $G$ is isomorphic to the closed formal subgroup scheme of ${\widehat{\mathbb A}}^{\{1, \dots, n\} \times {\mathbb N}^n}_{L}$ defined by the polynomials $h_{\lambda}$ for all $h \in I$ and $\lambda \in \Lambda_h$.
\end{proof}

\subsection{The Umemura functor{} as a Lie-Ritt functor}
\begin{notation}
In this subsection we assume that $L | K$ is as in subsection~\ref{ssec:notation}.
\end{notation}

\begin{lem}
\label{lem_D-module_algebra_isomorphism_mu}
	For any commutative $L$-algebra $A$ there exists an injective homomorphism of $D \otimes_C D_{I \mkern-1.9mu D^{n}}$-module algebras\index{$\mu_{A,{\bm{u}}}$}
	\begin{align}
		\label{eqn_lem_D-module_algebra_isomorphism_mu}
		\mu_{A,{\bm{u}}} \colon {}_{C}\mkern-1mu\mathcal{M}(D,L) \hat{\otimes}_L A\llbracket {\bm{w}} \rrbracket & \to {}_{C}\mkern-1mu\mathcal{M}(D,A\llbracket {\bm{w}} \rrbracket) \\
		\sum_{{\bm{i}} \in {\mathbb N}^n} f_{\bm{i}} \otimes a_{\bm{i}} {\bm{w}}^{\bm{i}} & \mapsto \sum_{{\bm{i}} \in {\mathbb N}^n} \theta_{{\bm{u}}}(f_{\bm{i}}) \cdot  \rho_{0} (a_{\bm{i}} {\bm{w}}^{{\bm{i}}}), \nonumber
	\end{align}
	where we consider ${}_{C}\mkern-1mu\mathcal{M}(D, A \llbracket {\bm{w}} \rrbracket)$ as $D$-module algebra via ${\Psi_{\mkern-0.7mu int}}$ and as $D_{I \mkern-1.9mu D^{n}}$-module algebra with the $D_{I \mkern-1.9mu D^{n}}$-module algebra structure induced by $\theta_{{\bm{w}}}$ on $A \llbracket {\bm{w}} \rrbracket$ to ${}_{C}\mkern-1mu\mathcal{M}(D, A \llbracket {\bm{w}} \rrbracket)$ via lemma~\ref{lem_internal_D-module_algebra_structure_on_ModCDA} \ref{itm_lem_internal_D-module_algebra_structure_on_ModCDA_another_MAS}.
\end{lem}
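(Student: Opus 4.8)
The plan is to obtain $\mu_{A,{\bm{u}}}$ from the universal property of the coproduct and then to prove injectivity by passing to the $({\bm{w}})$-adic associated graded, where it degenerates into the linear-disjointness map of lemma~\ref{lem_simple_linear_disjointness_for_constants}.

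First I would construct $\mu_{A,{\bm{u}}}$ before completion. Write $g\colon L\to A$ for the structure map and set $\theta_{{\bm{u}}}^{A}\coloneqq g\llbracket{\bm{w}}\rrbracket\circ\theta_{{\bm{u}}}\colon L\to A\llbracket{\bm{w}}\rrbracket$. By proposition~\ref{prop_D-mod_alg_structure_on_tensor_product} and diagram~\eqref{eqn_lem_module_algebra_structure_on_ModCDL_tensor_Aw_diag1}, the tensor product ${}_{C}\mkern-1mu\mathcal{M}(D,L)\otimes_{L}A\llbracket{\bm{w}}\rrbracket$ is the coproduct of ${}_{C}\mkern-1mu\mathcal{M}(D,L)$ and $A\llbracket{\bm{w}}\rrbracket$ over $L$ in the category of commutative $D\otimes_{C}D_{I \mkern-1.9mu D^{n}}$-module algebras, the base $L$ mapping in by $\rho_{0}$ and by $\theta_{{\bm{u}}}^{A}$, respectively. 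I take the two legs into ${}_{C}\mkern-1mu\mathcal{M}(D,A\llbracket{\bm{w}}\rrbracket)$ to be $\alpha\coloneqq{}_{C}\mkern-1mu\mathcal{M}(D,\theta_{{\bm{u}}}^{A})$ and $\beta\coloneqq\rho_{0}$. Naturality of $\rho_{0}$ (applied to $\theta_{{\bm{u}}}^{A}$) gives $\alpha\circ\rho_{0}=\rho_{0}\circ\theta_{{\bm{u}}}^{A}=\beta\circ\theta_{{\bm{u}}}^{A}$, so the two legs agree on $L$; the universal property then produces a unique homomorphism, and multiplicativity forces $f\otimes a{\bm{w}}^{{\bm{i}}}\mapsto\theta_{{\bm{u}}}(f)\cdot\rho_{0}(a{\bm{w}}^{{\bm{i}}})$, i.e.\ the stated formula. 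Both legs are continuous for the $({\bm{w}})$-adic topologies, so $\mu_{A,{\bm{u}}}$ extends to the completion by corollary~\ref{cor_extension_of_continous_D-measurings_to_completions}.

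That $\mu_{A,{\bm{u}}}$ respects the $D\otimes_{C}D_{I \mkern-1.9mu D^{n}}$-module algebra structures I would deduce from the same property of $\alpha$ and $\beta$, which then propagates through the coproduct. For the $D$-action, $\alpha={}_{C}\mkern-1mu\mathcal{M}(D,\theta_{{\bm{u}}}^{A})$ is a homomorphism of $D$-module algebras for ${\Psi_{\mkern-0.7mu int}}$ by lemma~\ref{lem_internal_D-module_algebra_structure_on_ModCDA}\ref{itm_lem_internal_D-module_algebra_structure_on_ModCDA_def}, and $\beta=\rho_{0}$ is one by lemma~\ref{lem_generalized_universal_Taylor_morphism_is_a_D-module_homomorphism} applied to the trivial structure ${\Psi_{0}}$ on $A\llbracket{\bm{w}}\rrbracket$. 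For the $D_{I \mkern-1.9mu D^{n}}$-action one checks that $\theta_{{\bm{u}}}^{A}$ intertwines $\theta_{{\bm{u}}}$ with $\theta_{{\bm{w}}}$ (the iterativity of $\theta_{{\bm{u}}}$) and that the resulting structure on the target is the one induced by \eqref{eqn_module_algebra_structure_on_images} in lemma~\ref{lem_internal_D-module_algebra_structure_on_ModCDA}\ref{itm_lem_internal_D-module_algebra_structure_on_ModCDA_another_MAS}; this is routine diagram-chasing.

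The heart of the matter is injectivity, which I would establish by filtering both sides $({\bm{w}})$-adically. Since $\theta_{{\bm{u}}}^{({\bm{0}})}=\id$, the map $\theta_{{\bm{u}}}^{A}$ reduces in degree zero to $g$, whence $\mu_{A,{\bm{u}}}(f\otimes a{\bm{w}}^{{\bm{i}}})$ has ${\bm{w}}$-order at least $|{\bm{i}}|$ with leading coefficient ${}_{C}\mkern-1mu\mathcal{M}(D,g)(f)\cdot\rho_{0}(a)$, so $\mu_{A,{\bm{u}}}$ is filtered. As $L$ is semisimple (proposition~\ref{prop:equivalent_conditions_to_Artinian_for_a_Noetherian_simple_D-module_algebra_with_injective_group_like_elements}), ${}_{C}\mkern-1mu\mathcal{M}(D,L)$ is flat over $\rho_{0}(L)\cong L$, so $\operatorname{gr}$ commutes with the tensor product and the associated graded of $\mu_{A,{\bm{u}}}$ is the direct sum over ${\bm{i}}$ of copies of the natural map
\[ {}_{C}\mkern-1mu\mathcal{M}(D,L)\otimes_{L}A\to{}_{C}\mkern-1mu\mathcal{M}(D,A),\qquad f\otimes a\mapsto{}_{C}\mkern-1mu\mathcal{M}(D,g)(f)\cdot\rho_{0}(a). \]
This map is injective: it expresses that ${}_{C}\mkern-1mu\mathcal{M}(D,L)$ and $\rho_{0}(A)$ are linearly disjoint over $\rho_{0}(L)$ inside ${}_{C}\mkern-1mu\mathcal{M}(D,A)$, which is lemma~\ref{lem_simple_linear_disjointness_for_constants} applied with $L$ and $A$ in the roles of $A$ and $B$ (note that $D=D^{1}\# CG$ is cocommutative). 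Hence $\operatorname{gr}(\mu_{A,{\bm{u}}})$ is injective, and since the source is separated and complete for the filtration, $\mu_{A,{\bm{u}}}$ is injective as well.

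The main obstacle is precisely this identification of $\operatorname{gr}(\mu_{A,{\bm{u}}})$ with the linear-disjointness map, resting on the interaction of the $({\bm{w}})$-adic filtration with the two distinct $L$-structures in play, namely $\rho_{0}$ on ${}_{C}\mkern-1mu\mathcal{M}(D,L)$ and $\theta_{{\bm{u}}}$ on $A\llbracket{\bm{w}}\rrbracket$. One must see that the degree-zero part of $\theta_{{\bm{u}}}$ is merely $g$, so that passing to the leading term discards the iterative derivation entirely and reduces everything to lemma~\ref{lem_simple_linear_disjointness_for_constants}, and one needs the flatness of ${}_{C}\mkern-1mu\mathcal{M}(D,L)$ over $L$ to move $\operatorname{gr}$ through the completed tensor product.
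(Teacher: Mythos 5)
Your proof is correct and rests on the same two pillars as the paper's: the map is the composite $m\circ(\theta_{{\bm{u}}}\otimes\rho_{0})$ (your derivation of it from the coproduct property of proposition~\ref{prop_D-mod_alg_structure_on_tensor_product} amounts to the same construction, and your verification that both legs respect the $D\otimes_C D_{I \mkern-1.9mu D^{n}}$-structures is exactly the content the paper leaves implicit), and injectivity ultimately comes from lemma~\ref{lem_simple_linear_disjointness_for_constants} combined with separatedness of the $({\bm{w}})$-adic filtration. The one genuine difference is how the injectivity is packaged. The paper first proves injectivity of the uncompleted map by applying lemma~\ref{lem_simple_linear_disjointness_for_constants} with $B=A\llbracket{\bm{w}}\rrbracket$ viewed as an $L$-algebra through $\theta_{{\bm{u}}}$, and then handles the completion by observing that the preimage of ${}_{C}\mkern-1mu\mathcal{M}(D,{\bm{w}}^{{\bm{k}}})$ is the ideal $(1\otimes{\bm{w}}^{{\bm{k}}})$, whose intersection is zero. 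You instead pass to the $({\bm{w}})$-adic associated graded, where the twist by $\theta_{{\bm{u}}}$ degenerates to the structure map $g$ and the lemma is applied with $B=A$ carrying its plain $L$-structure; this buys a cleaner invocation of the linear-disjointness lemma (no twisted $L$-structure on a power series ring) at the cost of needing flatness of ${}_{C}\mkern-1mu\mathcal{M}(D,L)$ over $L$ to identify the graded pieces, which you correctly secure from the semisimplicity of $L$ (proposition~\ref{prop:equivalent_conditions_to_Artinian_for_a_Noetherian_simple_D-module_algebra_with_injective_group_like_elements}). Both routes are sound and settle the completed statement; yours does so in one pass rather than the paper's two steps.
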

\begin{proof}
	The composition of homomorphisms of $D \otimes_C D_{I \mkern-1.9mu D^{n}}$-module algebras
	\[ \xymatrixcolsep{0.65em} \xymatrix{ {}_{C}\mkern-1mu\mathcal{M}(\mkern-1mu D,L) \mkern-2mu \otimes_L \mkern-3mu A\llbracket {\bm{w}} \rrbracket \ar[rrr]^-{\theta_{{\bm{u}}} \otimes  \rho_{0} }
		& & & {}_{C}\mkern-1mu\mathcal{M}(\mkern-1mu D,L\llbracket {\bm{w}} \rrbracket) \mkern-2mu \otimes_L \mkern-3mu {}_{C}\mkern-1mu\mathcal{M}( \mkern-1mu D, \mkern-1mu A\llbracket {\bm{w}} \rrbracket) \ar[r]^-{m}
		& {}_{C}\mkern-1mu\mathcal{M}( \mkern-1mu D, \mkern-1mu A\llbracket {\bm{w}} \rrbracket), } \]
	is injective by lemma~\ref{lem_simple_linear_disjointness_for_constants}.
	We extend this homomorphism to the completion ${}_{C}\mkern-1mu\mathcal{M}(D,L) \hat{\otimes}_L A\llbracket {\bm{w}} \rrbracket$.
	This extension is again injective, since the inverse image of ${}_{C}\mkern-1mu\mathcal{M}(D, {\bm{w}}^{\bm{k}})$ is $(1 \otimes {\bm{w}}^{\bm{k}})$ and their intersection over all ${\bm{k}} \in {\mathbb N}^n$ is $(0)$.
\end{proof}

The following theorem generalizes \cite[Lemma 5.9]{Umemura:1996b} and \cite[Theorem 2.22]{Morikawa:2009}.
\begin{thm}
\label{thm:InfGal_is_a_Lie-Ritt_functor}
	The functor $\operatorname{Ume}(L | K)$ is a Lie-Ritt functor over $L$.
\end{thm}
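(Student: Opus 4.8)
The plan is to exhibit, for every commutative $L$-algebra $A$, a natural group isomorphism between $\operatorname{Ume}(L | K)(A)$ and a set of the form $Z(I)(A)$ for a fixed ideal $I \unlhd L\llbracket {\bm{w}} \rrbracket \{ L\llbracket {\bm{Y}} \rrbracket \}_{\theta_{}}$, with $n$ the transcendence degree fixed in subsection~\ref{ssec:notation}. The bridge is the injective $D \otimes_C D_{I\mkern-1.9mu D^{n}}$-module algebra homomorphism $\mu_{A,{\bm{u}}}$ of lemma~\ref{lem_D-module_algebra_isomorphism_mu}, which lets me regard $\mathcal{L} \hat{\otimes}_L A\llbracket {\bm{w}} \rrbracket$ and its subalgebra $\mathcal{K} \hat{\otimes}_L A\llbracket {\bm{w}} \rrbracket$ concretely inside ${}_{C}\mkern-1mu\mathcal{M}(D, A\llbracket {\bm{w}} \rrbracket)$ and to recognize the iterative derivation $\theta_{{\bm{u}}}$ there as differentiation with respect to ${\bm{w}}$.

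First I would record the effect of an automorphism $\varphi \in \operatorname{Ume}(L | K)(A)$ on coordinates. Since $\varphi$ fixes $\mathcal{K} \hat{\otimes}_L A\llbracket {\bm{w}} \rrbracket$, it fixes $ \rho_{0} (L)$ --- hence all $D$-constants, cf. lemma~\ref{lem_internal_D-module_algebra_structure_on_ModCDA}\ref{itm_lem_internal_D-module_algebra_structure_on_ModCDA_const} --- and fixes $ \rho_{} (K)$, and it commutes with the $D$-module structure and with $\theta_{{\bm{u}}}$. Because $L$ is formally étale over $K({\bm{u}})$ (subsection~\ref{ssec:notation}) and $ \rho_{} $ is an iterative differential homomorphism, I would show that $\mathcal{L} =  \rho_{0} (L) \{  \rho_{} (L) \}_{\theta_{{\bm{u}}}}$ is generated over $\mathcal{K}$, as a $D \otimes_C D_{I\mkern-1.9mu D^{n}}$-module algebra, by the finitely many elements $ \rho_{} (u_1), \dots,  \rho_{} (u_n)$. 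Consequently $\varphi$ is determined by the tuple $( \varphi( \rho_{} (u_1)), \dots, \varphi( \rho_{} (u_n)) )$; passing through $\mu_{A,{\bm{u}}}$, this tuple is encoded by an element $\Phi_\varphi = (\varphi_1, \dots, \varphi_n) \in \bm{\Gamma}_{\mkern-3mu nL}(A)$, and the congruence condition in definition~\ref{def_InfGal} translates exactly into $\varphi_i \equiv w_i \bmod N(A)\llbracket {\bm{w}} \rrbracket$. This yields an injective map $\operatorname{Ume}(L | K)(A) \to \bm{\Gamma}_{\mkern-3mu nL}(A)$, $\varphi \mapsto \Phi_\varphi$, natural in $A$.

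Next I would check that $\varphi$ acts precisely as the substitution ${\bm{w}} \mapsto \Phi_\varphi$ on the concrete model: fixing the constants $ \rho_{0} (L)$ and commuting with $\theta_{{\bm{u}}}$ forces $\varphi$ to coincide with composition by $\Phi_\varphi$ wherever it is defined. In particular the group law on $\operatorname{Ume}(L | K)(A)$ matches the composition law on $\bm{\Gamma}_{\mkern-3mu nL}(A)$, so $\varphi \mapsto \Phi_\varphi$ is a homomorphism of groups and its image is a subgroup of $\bm{\Gamma}_{\mkern-3mu nL}(A)$.

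Finally, and this is the step I expect to be the crux, I would identify the image as $Z(I)(A)$. A tuple $\Phi \in \bm{\Gamma}_{\mkern-3mu nL}(A)$ arises from an automorphism exactly when the substitution ${\bm{w}} \mapsto \Phi$ carries $\mathcal{L} \hat{\otimes}_L A\llbracket {\bm{w}} \rrbracket$ into itself compatibly with all structures, i.e. when it respects the defining iterative differential relations of $\mathcal{L}$ over $\mathcal{K}$. Using finite generation and separability of $L | K$, these relations can be written as the vanishing of finitely many iterative differential polynomials, together with their $\theta_{}$-derivatives, in the variables ${\bm{Y}}$ standing for the transformed coordinates and with coefficients in $L\llbracket {\bm{w}} \rrbracket$; letting $I$ be the ideal they generate in $L\llbracket {\bm{w}} \rrbracket \{ L\llbracket {\bm{Y}} \rrbracket \}_{\theta_{}}$, the condition becomes $F_{|{\bm{Y}} = \Phi} = 0$ for all $F \in I$. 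Hence $\operatorname{Ume}(L | K)(A) \cong Z(I)(A)$ naturally in $A$, so $\operatorname{Ume}(L | K)$ is a Lie-Ritt functor. The main obstacle is making this last translation precise: one must verify that ``$\varphi$ extends to a well-defined module algebra automorphism fixing $\mathcal{K}$'' is equivalent to a system of equations of the form $F_{|{\bm{Y}} = \Phi} = 0$, which requires tracking how the relations cutting out $\mathcal{L}$ inside ${}_{C}\mkern-1mu\mathcal{M}(D,L)$ transform under substitution and confirming that invariance under $\theta_{{\bm{u}}}$ lets one close the system under the iterative derivation.
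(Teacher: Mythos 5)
Your overall strategy coincides with the paper's: map $\varphi \in \operatorname{Ume}(L | K)(A)$ to the tuple of its values on the $ \rho_{} (u_i)$ via $\mu_{A,{\bm{u}}}$, show $\varphi$ is thereby determined and acts as substitution by $\Phi_\varphi$, match the group laws, and cut out the image by the ideal of iterative differential relations of $\mathcal{L}$ over $\mathcal{K}$. However, one intermediate step would fail as literally stated: the claim that $\mathcal{L}$ is generated over $\mathcal{K}$, as a $D \otimes_C D_{I \mkern-1.9mu D^{n}}$-module algebra, by $ \rho_{} (u_1), \dots,  \rho_{} (u_n)$. This is false whenever $L$ is a proper (separable algebraic) extension of $K({\bm{u}})$. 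Already for trivial $D$ and $L = K(u)(a)$ with $a^2 = u$, the $D_{I \mkern-1.9mu D^{}}$-module subalgebra of $L\llbracket w \rrbracket$ generated by $ \rho_{0} (L)$ and $\theta_{u}(u) = u + w$ is just $L[w]$, which does not contain $\theta_{u}(a) = \sqrt{u+w}$ (a genuine power series). So you cannot deduce that $\varphi$ is determined by its values on the $ \rho_{} (u_i)$ from generation.

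The conclusion you want is nevertheless true, and the tool you invoke -- formal étaleness of $L$ over $K({\bm{u}})$ -- is exactly what rescues it, but as a \emph{uniqueness-of-homomorphisms} statement rather than a generation statement. The paper's proof introduces two homomorphisms $F, G \colon L \to A\llbracket {\bm{w}} \rrbracket$ (one built from ${}_{\Phi}\theta_{{\bm{u}}} \circ  \rho_{} $, the other from $\operatorname{ev}_{1_D} \circ \mu_{A,{\bm{u}}} \circ \varphi( \rho_{} (-) \otimes 1)$), checks that they agree on $K$ and on each $u_i$, hence on $K({\bm{u}})$, and then concludes they agree on all of $L$ because $L$ is formally étale over $K({\bm{u}})$ and the two maps are congruent modulo the ideal generated by $N(A)$ and ${\bm{w}}$. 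Combined with the fact that all maps involved are $D$-module algebra homomorphisms and with the injectivity of $\mu_{A,{\bm{u}}}$, this yields the explicit formula $\varphi( \rho_{} (a) \otimes 1) = \sum_{{\bm{k}}} \theta_{{\bm{u}}}^{({\bm{k}})}( \rho_{} (a)) \otimes (\Phi - {\bm{w}})^{{\bm{k}}}$, which simultaneously gives injectivity, the substitution description, and (after a binomial computation) the compatibility with the group laws. You should replace the generation claim by this argument. Two further small points: your assertion that the defining relations can be taken to be \emph{finitely many} is neither needed (the definition of a Lie-Ritt functor allows an arbitrary ideal $I$) nor established by finite generation of $L | K$; the paper simply takes $I$ to be generated by \emph{all} elements $F_d$ obtained from iterative differential polynomials over $\mathcal{K}$ vanishing on tuples from $ \rho_{} (L)$, twisted by all $d \in D$ -- the twist by $D$ is a detail your sketch omits but which is needed so that $Z(I)$ captures compatibility with the full $D$-module algebra structure and not only with $\theta_{{\bm{u}}}$.
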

\begin{proof}
	For a commutative $L$-algebra $A$ we consider the map
	\begin{align}
	\label{eqn:representation_of_InfGal_as_Lie_Ritt-functor}
	\begin{split}
		\operatorname{Ume}(L | K)(A) \longrightarrow \; & \bm{\Gamma}_{\mkern-3mu nL}(A) \\
		\varphi \longmapsto \; & (\operatorname{ev}_{1_D} \circ \mu_{A,{\bm{u}}} \circ \varphi(  \rho_{} (u_i) \otimes 1 ) - u_i)_{i=1,\dots,n}.
	\end{split}
	\end{align}
	Let $\varphi \in \operatorname{Ume}(L | K)(A)$ and define
	\[ \Phi \coloneqq (\varphi_1, \dots, \varphi_n) \coloneqq (\operatorname{ev}_{1_D} \circ \mu_{A,{\bm{u}}} \circ \varphi( \rho_{} (u_i) \otimes 1) - u_i)_{i=1,\dots,n}. \]
	Then we have
	\[ \operatorname{ev}_{1_D} \circ \mu_{A,{\bm{u}}} \circ \varphi(  \rho_{} (u_i) \otimes 1 ) - u_i
		\equiv w_i \mod N(A)\llbracket {\bm{w}} \rrbracket,
	\]
	so that \eqref{eqn:representation_of_InfGal_as_Lie_Ritt-functor} is well defined.
	We define
	\[
	F \colon L \to A\llbracket {\bm{w}} \rrbracket,
	\quad
	a \mapsto \operatorname{ev}_{1_D} \circ {}_\Phi \theta_{{\bm{u}}} \circ  \rho_{} (a),
	\]
	where ${}_{\Phi}\theta_{{\bm{u}}} \colon L \to A\llbracket {\bm{w}} \rrbracket$ is the composition of $\theta_{{\bm{u}}}$ with the endomorphism of $A\llbracket {\bm{w}} \rrbracket$ sending $w_{i}$ to $\varphi_{i}$ for all $i \in \{1, \dots, n\}$, and
	\[
	G \colon L \to A\llbracket {\bm{w}} \rrbracket,
	\quad
	a \mapsto \operatorname{ev}_{1_D} \circ \mu_{A,{\bm{u}}} \circ \varphi( \rho_{} (a) \otimes 1).
	\]
	Trivially $F$ and $G$ coincide on $K$ and for all $i=1,\dots,n$ we have
	\[
	F(u_i)
		\mkern-1mu = \mkern-1mu \operatorname{ev}_{1_D} \mkern-3mu \circ {}_\Phi \theta_{{\bm{u}}} \circ  \rho_{} (u_i)
		\mkern-1mu = \mkern-1mu u_i + \varphi_i
		\mkern-1mu = \mkern-1mu u_i + \operatorname{ev}_{1_D} \mkern-3mu \circ \mu_{A,{\bm{u}}} \circ \varphi( \rho_{} (u_i) \otimes 1) - u_i
		\mkern-1mu = \mkern-1mu G(u_i).
	\]
	Since $L$ is formally étale over $K(u_{1}, \dots, u_{n})$,
	$F$ and $G$ coincide also on $L$.
	Since $ \rho_{} , \varphi, \mu_{A,{\bm{u}}}$ and ${}_{\Phi} \theta_{{\bm{u}}}$ are homomorphisms of $D$-module algebras, it follows that for all $a \in L$
	\begin{align*}
		(\mu_{A,{\bm{u}}} \circ \varphi (  \rho_{} (a) \otimes 1 ))(d) & = (d.(\mu_{A, {\bm{u}}} \circ \varphi (  \rho_{} (a) \otimes 1)))(1_D) \\
		& = (\mu_{A,{\bm{u}}} \circ \varphi(  \rho_{} (d.a) \otimes 1))(1_D) \\
		& = ( {}_{\Phi}\theta_{{\bm{u}}}( \rho_{} (d.a)) )(1_D) \\
		& = (d.({}_{\Phi}\theta_{{\bm{u}}}( \rho_{} (a))))(1_D) \\
		& = ( {}_{\Phi}\theta_{{\bm{u}}}( \rho_{} (a)))(d),
	\end{align*}
	i.e. $\mu_{A,{\bm{u}}} \circ \varphi (  \rho_{} (a) \otimes 1 ) = {}_{\Phi}\theta_{{\bm{u}}}( \rho_{} (a))$.
	Using that for all $a \in L$
	\[
		\mu_{A,{\bm{u}}} \left( \sum_{{\bm{k}} \in {\mathbb N}^n} \theta_{{\bm{u}}}^{({\bm{k}})}( \rho_{} (a)) \otimes (\Phi-{\bm{w}})^{\bm{k}} \right) = {}_{\Phi}\theta_{{\bm{u}}}( \rho_{} (a))
	\]
	and the injectivity of $\mu_{A,{\bm{u}}}$, this implies
	\begin{align}
		\label{eq:explicit_description_of_infinitesimal_automorphisms_using_infinitesimal_transformations}
		\varphi( \rho_{} (a) \otimes 1) = \sum_{{\bm{k}} \in {\mathbb N}^n} \theta_{{\bm{u}}}^{({\bm{k}})}( \rho_{} (a)) \otimes (\Phi-{\bm{w}})^{\bm{k}}.
	\end{align}

	Next, we show that the natural transformation induced by \eqref{eqn:representation_of_InfGal_as_Lie_Ritt-functor} respects the group structures.
	To this end, let $\psi \in \operatorname{Ume}(L | K)(A)$ and let $\Psi$ be the image of it under \eqref{eqn:representation_of_InfGal_as_Lie_Ritt-functor}.
	By equation~\eqref{eq:explicit_description_of_infinitesimal_automorphisms_using_infinitesimal_transformations} we obtain, using the abbreviations $\Phi' = \Phi - {\bm{w}}$ and $\Psi' = \Psi - {\bm{w}}$, that for all $a \in L$
\begin{align*}
	\varphi \mkern-2mu \circ \mkern-2mu \psi (  \rho_{} (a) \mkern-3mu \otimes \mkern-3mu  1 )
		& \mkern-4mu = \varphi \left( \sum_{{\bm{k}} \in {\mathbb N}^n} \theta_{{\bm{u}}}^{({\bm{k}})}( \rho_{} (a)) \otimes (\Psi-{\bm{w}})^{\bm{k}} \right) \\
                & \mkern-4mu = \mkern-3mu \sum_{{\bm{k}} \in {\mathbb N}^n} \mkern-1mu \theta^{({\bm{k}})} \left( \sum_{{\bm{l}} \in {\mathbb N}^n} \theta_{{\bm{u}}}^{({\bm{l}})}( \rho_{} (a)) \otimes {\Phi'}^{\bm{l}} \right) (1 \otimes {\Psi'}^{\bm{k}}) \\
		& \mkern-4mu = \mkern-5mu \sum_{{\bm{k_1}}\mkern-3mu,{\bm{k_2}}\mkern-2mu, {\bm{l}} \in {\mathbb N}^n} \mkern-9mu \left( \binom{ {\bm{k_1 + l}}}{{\bm{l}}} \mkern-1mu \theta_{{\bm{u}}}^{(\mkern-1mu {\bm{k_1 + l)}}} ( \rho_{} ( a ) ) \otimes \theta_{{\bm{w}}}^{({\bm{k_2}} )} ( {\Phi'}^{\bm{l}} ) \right) (1 \otimes {\Psi'}^{{\bm{k_1}} + {\bm{k_2}}} ) \\
                & \mkern-4mu = \mkern-3mu \sum_{{\bm{m}},{\bm{k_2}}, {\bm{l}} \in {\mathbb N}^n} \binom{{\bm{m}}}{{\bm{l}}} \theta_{{\bm{u}}}^{({\bm{m}})}( \rho_{} (a)) \otimes \theta_{{\bm{w}}}^{({\bm{k_2}})}({\Phi'}^{\bm{l}}) {\Psi'}^{{\bm{m - l + k_2}}}) \\
                & = \sum_{{\bm{m}}, {\bm{l}} \in {\mathbb N}^n} \theta_{{\bm{u}}}^{({\bm{m}})}( \rho_{} (a)) \otimes \binom{{\bm{m}}}{{\bm{l}}} \Psi'^{{\bm{m}} - {\bm{l}}} (\Phi'(\Psi))^{{\bm{l}}} \\
		& = \sum_{{\bm{m}} \in {\mathbb N}^m} \theta_{{\bm{u}}}^{({\bm{m}})}( \rho_{} (a)) \otimes (\Phi(\Psi) - {\bm{w}})^{\bm{m}}.
\end{align*}
It follows that the image of $\varphi \circ \psi$ in $\bm{\Gamma}_{\mkern-3mu nL}(A)$ is $\Phi(\Psi)$.

It remains to show that the image of the natural transformation induced by \eqref{eqn:representation_of_InfGal_as_Lie_Ritt-functor} is of the form $Z(I)$ for some ideal $I$ of $L\llbracket {\bm{w}} \rrbracket \{ L\llbracket {\bm{Y}} \rrbracket \}_{\theta_{}}$.
For all $m \in {\mathbb N}$, elements $a_1, \dots a_m \in L$, iterative differential polynomials $F(X_{a_1}, \dots, X_{a_m}) \in \mathcal{K} \{ X_{a_1}, \dots, X_{a_m} \}_{D_{I \mkern-1.9mu D^{n}}}$ with coefficients in $\mathcal{K}$ and in iterative differential variables $X_{a_1}, \dots X_{a_m}$ fulfilling $F( \rho_{} (a_1), \dots,  \rho_{} (a_m)) = 0$ (with respect to the iterative derivation $\theta_{{\bm{u}}}$) and for all $d \in D$ we define
\[
F_d \coloneqq
	F^{\theta_{{\bm{u}}}}(
		{}_{{\bm{Y}}}\theta_{{\bm{u}}} (  \rho_{} ( a_1 ) ),
		\dots,
		{}_{{\bm{Y}}}\theta_{{\bm{u}}} (  \rho_{} ( a_m ) )
	)
\]
as an element of $L\llbracket {\bm{w}} \rrbracket \{ L \llbracket {\bm{Y}} \rrbracket \}_{\theta_{}}$,
where ${\bm{Y}}$ is considered as an $n$-tuple of iterative differential variables and $F^{\theta_{{\bm{u}}}}$ denotes the differential polynomial in $X_{a_1}, \dots, X_{a_m}$ obtained from $F$ by applying $\theta_{{\bm{u}}}$ to its coefficients.
Let $I$ be the ideal generated by all such $F_d$.
Then the image of \eqref{eqn:representation_of_InfGal_as_Lie_Ritt-functor} is equal to $Z(I)$.
\end{proof}

\begin{cor}
	The functor $\operatorname{Ume}(L | K)$ is a formal group scheme over $L$.
\end{cor}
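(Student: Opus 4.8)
The plan is to deduce the statement immediately from the two results just established. By Theorem~\ref{thm:InfGal_is_a_Lie-Ritt_functor} the functor $\operatorname{Ume}(L | K)$ is a Lie-Ritt functor over $L$; that is, there exist an integer $n \in {\mathbb N}$ and an ideal $I$ of $L\llbracket {\bm{w}} \rrbracket \{ L\llbracket {\bm{Y}} \rrbracket \}_{\theta_{}}$ together with a natural isomorphism $\operatorname{Ume}(L | K)(A) \cong Z(I)(A)$ for every commutative $L$-algebra $A$. Proposition~\ref{prop:Lie-Ritt_functors_are_formal_groups} then asserts that any such Lie-Ritt functor is isomorphic to a formal group scheme over $L$. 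Composing these two isomorphisms of group functors yields the desired identification of $\operatorname{Ume}(L | K)$ with a formal group scheme over $L$.

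Concretely, I would recall from the proof of Proposition~\ref{prop:Lie-Ritt_functors_are_formal_groups} that $Z(I)$ is realized as the closed formal subgroup scheme of ${\widehat{\mathbb A}}^{\{1, \dots, n\} \times {\mathbb N}^n}_{L}$ cut out by the polynomial equations $(h_\lambda)_{\lambda \in \Lambda_h}$ attached to the generators $h \in I$, the ambient formal group law being the one induced by composition in $\bm{\Gamma}_{\mkern-3mu nL}$. Transporting this structure back along the natural isomorphism supplied by Theorem~\ref{thm:InfGal_is_a_Lie-Ritt_functor} exhibits $\operatorname{Ume}(L | K)$ itself as a formal group scheme over $L$.

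There is no genuine obstacle here: all of the substantive work — verifying that the composition law on $\bm{\Gamma}_{\mkern-3mu nL}$ restricts to a group structure on the image of $\operatorname{Ume}(L | K)$, producing the defining iterative differential equations $F_d$, and checking that a Lie-Ritt functor is pro-representable by a formal group scheme — has already been carried out in the theorem and the proposition. Hence the corollary is a purely formal consequence of chaining the two isomorphisms of group functors together.
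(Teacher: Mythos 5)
Your proposal is correct and follows exactly the paper's own argument: the corollary is obtained by combining theorem~\ref{thm:InfGal_is_a_Lie-Ritt_functor} with proposition~\ref{prop:Lie-Ritt_functors_are_formal_groups}. The additional unpacking of the proofs of those two results is accurate but not needed beyond the two citations.
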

\begin{proof}
	This follows from theorem~\ref{thm:InfGal_is_a_Lie-Ritt_functor} and proposition~\ref{prop:Lie-Ritt_functors_are_formal_groups}.
\end{proof}

\begin{ex}
	In the situation of example~\ref{ex:function_with_constant_derivation} the Umemura functor $\operatorname{Ume}(L | K)$ is isomorphic to the functor $G_+$ (cf. example~\ref{ex:additive_formal_group_as_a_Lie-Ritt_functor}) and thus to the additive formal group scheme ${\hat{\mathbb G}}_a$.
\end{ex}
\begin{proof}
Let $A$ be a commutative $L$-algebra.
Then every $\varphi \in \operatorname{Ume}(L | K)(A)$ is determined by $\varphi( \rho_{} (y) \otimes 1)$.
The relation
$\theta_{y}^{(1)}( \rho_{} (y))
= 1$
implies
\begin{equation}
\label{eqn:first_derivative}
	\theta_{}^{(1)}( \varphi( \rho_{} (y) \otimes 1)) = 0
\end{equation}
where $\theta_{}$ denotes the iterative derivation on $\mathcal{L} \hat{\otimes}_L A\llbracket w \rrbracket$ induced by $\theta_{u} (= \theta_{y})$ on $\mathcal{L}$ and $\theta_{w}$ on $A\llbracket w \rrbracket$.
Since for all $k \geq 2$ we have $\theta_{y}^{(k)}( \rho_{} (y)) = 0$, it follows
\begin{equation}
\label{eqn:higher_derivatives}
\theta_{}^{(k)}( \varphi( \rho_{} (y) \otimes 1) )
= 0.
\end{equation}
By theorem~\ref{thm:InfGal_is_a_Lie-Ritt_functor}, there exists a $\Phi \in \bm{\Gamma}_{\mkern-3mu 1L}(A)$ such that
$\varphi( \rho_{} (y) \otimes 1) = \sum_{k \in {\mathbb N}} \theta_{y}^{(k)}( \rho_{} (y)) \otimes (\Phi - w)^k$,
cf. equation~\eqref{eq:explicit_description_of_infinitesimal_automorphisms_using_infinitesimal_transformations}.
In our situation this becomes
\[
\varphi( \rho_{} (y) \otimes 1)
= \sum_{k \in {\mathbb N}} \theta_{y}^{(k)}( \rho_{} (y)) \otimes (\Phi - w)^k
=  \rho_{} (y) \otimes 1 + 1 \otimes (\Phi - w).
\]
Therefore, if we write $\Phi = \sum_{i \in {\mathbb N}} a_i w^i$, then
\[
\varphi( \rho_{} (y) \otimes 1)
= (y+t) \otimes 1 + 1 \otimes (\sum_{i \in {\mathbb N}} a_i w^i - w).
\]
From \eqref{eqn:first_derivative} and \eqref{eqn:higher_derivatives} we obtain $\sum_{i \geq 1} a_i i w^{i-1} = 1$ and $\sum_{i \in {\mathbb N}} a_i \theta_{w}^{(k)}(w^i) = 0$,
which implies $a_1$=1 and $a_k = 0$ for all $k \geq 2$.
Hence $\varphi( \rho_{} (y) \otimes 1) =  \rho_{} (y) \otimes 1 + 1 \otimes a_0$ and $\Phi = a_0 + w$ with $a_0 \in N(A)$.
The elements $\Phi \in \bm{\Gamma}_{\mkern-3mu 1L}(A)$ of this form
are the elements of $G_+(A)$.
Conversely, the automorphisms $\varphi$ of the form $\varphi( \rho_{} (y) \otimes 1) =  \rho_{} (y) \otimes 1 + 1 \otimes a_0$ belong to $\operatorname{Ume}(L | K)(A)$.
\end{proof}

\begin{ex}
	In the situation of example~\ref{ex:exponential_function} the Umemura functor $\operatorname{Ume}(L | K)$ is isomorphic to the functor $\tilde{G}_*$ (cf. example~\ref{ex:multiplicative_formal_group_as_a_Lie-Ritt_functor_for_exponential}) and thus to the multiplicative formal group scheme ${\hat{\mathbb G}}_m$.
\end{ex}
\begin{proof}
Let $A$ be a commutative $L$-algebra.
Then every $\varphi \in \operatorname{Ume}(L | K)(A)$ is determined by $\varphi( \rho_{} (y) \otimes 1)$.
The relations
$ \rho_{} (y) = y \theta_{y}^{(1)}( \rho_{} (y))$
and
$\theta_{y}^{(k)}( \rho_{} (y)) = 0$
imply
\[ \varphi( \rho_{} (y) \otimes 1) = (y \otimes 1) \theta_{}^{(1)}(\varphi( \rho_{} (y) \otimes 1))
\quad
\text{and}
\quad
\theta_{}^{(k)}(\varphi( \rho_{} (y) \otimes 1)) = 0
\]
for all $k \geq 2$.
By theorem~\ref{thm:InfGal_is_a_Lie-Ritt_functor} there exists a $\Phi \in \bm{\Gamma}_{\mkern-3mu 1}(A)$ such that
\[
\varphi( \rho_{} (y) \otimes 1) = \sum_{k \in {\mathbb N}} \theta_{y}^{(k)}( \rho_{} (y)) \otimes (\Phi - w)^k ,
\]
cf. equation~\eqref{eq:explicit_description_of_infinitesimal_automorphisms_using_infinitesimal_transformations}, which becomes
\[
\varphi( \rho_{} (y) \otimes 1) =  \rho_{} (y) \otimes 1 +  \rho_{} (y)/y \otimes (\Phi - w)
\]
here.
It follows
\begin{align*}
 \rho_{} (y) \otimes 1 +  \rho_{} (y)/y \otimes (\Phi - w)
&= (y \otimes 1) ( \theta_{}^{(1)}( \rho_{} (y) \otimes 1 +  \rho_{} (y)/y \otimes (\Phi - w)) ) \\
&=  \rho_{} (y) \otimes 1 +  \rho_{} (y) \otimes (\theta_{}^{(1)}(\Phi) - 1))
\end{align*}
and therefore
\begin{align*}
\Phi = (y+w)\theta_{}^{(1)}(\Phi) - y.
\end{align*}
\end{proof}

\section{Picard-Vessiot extensions of Artinian simple module algebras}
\label{sec:PV_extensions_of_Artinian_simple_module_algebras}
Amano and Masuoka unified the Picard-Vessiot theories of differential equations and difference equations using Artinian simple $D$-module algebras (cf. \cite{AmanoMasuoka:2005}), where $D$ is a bialgebra over a field $C$.
They restrict themselves to the case where the bialgebra $D$ is a pointed cocommutative Hopf algebra such that its irreducible component of $1$ is of Birkhoff-Witt type.
This excludes bialgebras such as $D_{end}$ (cf. example~\ref{ex:module_algebra_structures}~\ref{itm:ex:module_algebra_structures:endomorphisms}).
Here we sketch how their definitions and some of their results can be generalized to include this case as well.

\begin{notation}
	Let $C$ be a field, $G$ be a monoid and $D^1$ be a pointed irreducible cocommutative Hopf algebra of Birkhoff-Witt type over $C$ such that $D^1$ is a $C G$-module algebra (cf. example \ref{ex:module_algebra_structures}~\ref{itm:ex:module_algebra_structures:group_like_bialgebras_over_a_monoid}).
	We define $D \coloneqq D^1 \# C G$.
\end{notation}

\begin{defn}
	An extension of Artinian simple commutative $D$-module algebras $(L,  \rho_{L} ) | (K,  \rho_{K} )$ is \emph{Picard-Vessiot} if the following hold:
	\begin{enumerate}
		\item The elements $g \in G$ operate as injective endomorphisms on $L$.
		\item The constants $L^{ \rho_{L} }$ of $L$ coincide with the constants $K^{ \rho_{K} }$ of $K$.
		\item There exists an intermediate $D$-module algebra $(R,  \rho_{R} )$ of $K \subseteq L$ such that the total ring of fractions $\TotalQuot(R)$ of $R$ is equal to $L$ and such that the $K^{ \rho_{K} }$-subalgebra \[H \coloneqq (R \otimes_K R)^{ \rho_{R}  \otimes  \rho_{R} }\] of $R \otimes_K R$ generates $R \otimes_K R$ as left $R$-algebra, i.e.
		\[
			R \cdot H = R \otimes_K R.
		\]
		\label{itm_defn_Picard-Vessiot_extension_torsor_property}
	\end{enumerate}
	The Picard-Vessiot extension $L | K$ is called \emph{finitely generated} if it is finitely generated as extension of Artinian simple $D$-module algebras.
	\label{defn_Picard-Vessiot_extension}
\end{defn}

\begin{prop}
	\label{prop:fundamental_isomorphism_of_Picard-Vessiot_theory}
	Let $L | K$ be a Picard-Vessiot extension of Artinian simple commutative $D$-module algebras with constants $k \coloneqq L^{ \rho_{} }$ and $(R,  \rho_{R} )$ and $H$ be as in definition \ref{defn_Picard-Vessiot_extension}.
	Then the following hold:
	\begin{enumerate}
		\item The homomorphism
			\begin{equation}
				\mu \colon (R \otimes_k H,  \rho_{R}  \otimes  \rho_{0} ) \to (R \otimes_K R,  \rho_{R}  \otimes  \rho_{R} ), \quad a \otimes h \mapsto (a \otimes 1) \cdot h
				\label{eqn:prop:fundamental_isomorphism_of_Picard-Vessiot_theory}
			\end{equation}
			is an isomorphism of $D$-module algebras.
		\item The $k$-algebra $H$ carries a Hopf algebra structure induced by the $R$-coalgebra structure on $R \otimes_K R$, given by the counit
		\[  \varepsilon \colon R \otimes_K R \to R, \quad a \otimes b \mapsto ab \]
		and the comultiplication
		\[ \Delta \colon R \otimes_K R \to (R \otimes_K R) \otimes_R (R \otimes_K R), \quad a \otimes b \mapsto (a \otimes 1) \otimes (1 \otimes b). \]
		The antipode $S$ of $H$ is induced by the homomorphism
		\[ \tau \colon R \otimes_K R \to R \otimes_K R, \quad a \otimes b \mapsto b \otimes a. \]
		\item \label{itm:prop:fundamental_isomorphism_of_Picard-Vessiot_theory:uniqueness_of_principal_module_algebra} The intermediate $D$-module algebra $(R,  \rho_{R} )$ satisfying condition \ref{itm_defn_Picard-Vessiot_extension_torsor_property} in definition \ref{defn_Picard-Vessiot_extension} is unique.
	\end{enumerate}
\end{prop}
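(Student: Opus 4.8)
The plan is to treat the three parts in order, using the linear disjointness of Lemma~\ref{lem:constants_of_simple_module_algebras_are_fields_and_linear_disjointness} as the main engine for part~(1) and then transporting structure along the isomorphism $\mu$ for part~(2).

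For part~(1), surjectivity of $\mu$ is immediate: its image is the $R$-submodule of $R\otimes_K R$ generated by $H$, which is $R\cdot H=R\otimes_K R$ by condition~\ref{itm_defn_Picard-Vessiot_extension_torsor_property}. That $\mu$ is a homomorphism of $D$-module algebras is a direct check: since every $h\in H$ is a constant, the diagonal $D$-action on $(a\otimes1)\cdot h$ only sees the first tensor factor, matching the action $\rho_R\otimes\rho_0$ on the source. The substance is injectivity. Here I would first note that $K$ is Artinian simple, so by Proposition~\ref{prop:equivalent_conditions_to_Artinian_for_a_Noetherian_simple_D-module_algebra_with_injective_group_like_elements} every $K$-module is projective; in particular $R$ is flat over $K$, so the inclusion $R\hookrightarrow L=\TotalQuot(R)$ induces an inclusion $R\otimes_K R\hookrightarrow L\otimes_K R$. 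Now $L\otimes_K R$ is a module over the smash product $L\# D$ (with $L$ acting on the first factor and $D$ acting diagonally), and $L$ is Artinian simple with $L^{\rho}=k$ a field. Lemma~\ref{lem:constants_of_simple_module_algebras_are_fields_and_linear_disjointness} then gives that the natural map $L\otimes_k(L\otimes_K R)^{D}\to L\otimes_K R$ is injective. Since $H\hookrightarrow(L\otimes_K R)^{D}$, the composite $R\otimes_k H\xrightarrow{\mu}R\otimes_K R\hookrightarrow L\otimes_K R$ coincides with the restriction of this injective map to $R\otimes_k H$, forcing $\mu$ to be injective.

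For part~(2), I would check that each of $\varepsilon$, $\Delta$ and $\tau$ is a homomorphism of $D$-module algebras for the diagonal action, so that each carries constants to constants and hence restricts to $H$: explicitly $\varepsilon(H)\subseteq R^{\rho_R}=k$, $\tau(H)\subseteq H$, and $\Delta(H)$ lands in the constants of $(R\otimes_K R)\otimes_R(R\otimes_K R)$, which the isomorphism $\mu$ of part~(1) identifies with $H\otimes_k H$. This yields $k$-linear maps $\varepsilon_H$, $\Delta_H$ and $S$ on $H$. The Hopf-algebra axioms (coassociativity, the counit property, and the two antipode identities) are then inherited verbatim from the corresponding identities for the \emph{banal} (pair) groupoid structure on $R\otimes_K R$ over $R$, read off on the subalgebra of constants; no computation beyond these standard identities is required.

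Part~(3) is the crux. The plan is to show that both the Hopf algebra $H$ and its coaction are intrinsic to $L\mid K$, and only then that $R$ is pinned down inside $L$. First I would extend $\mu^{-1}\circ(a\mapsto 1\otimes a)$ to a coaction $\beta_L\colon L\to L\otimes_k H$ (using flatness again to see that non-zero-divisors of $R$ map to non-zero-divisors, so the localization to $L=\TotalQuot(R)$ is legitimate), and verify via Lemma~\ref{lem:constants_of_simple_module_algebras_are_fields_and_linear_disjointness} that the inclusion $H=(R\otimes_K R)^{D}\hookrightarrow(L\otimes_K L)^{D}$ is an equality and that $\beta_L$ is the canonical coaction coming from $L\to L\otimes_K L\cong L\otimes_k(L\otimes_K L)^{D}$; this makes $H$ and $\beta_L$ depend only on $L\mid K$. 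A second intermediate algebra $(R',\rho_{R'})$ satisfying~\ref{itm_defn_Picard-Vessiot_extension_torsor_property} then produces the same $H$ and the same $\beta_L$. The remaining and genuinely hard step is to recover $R$ from these data, i.e.\ to prove that an intermediate $D$-module algebra with $\TotalQuot=L$ and the torsor property is uniquely determined by its coaction. I expect to do this by comparing the two trivializations $R\otimes_K R\cong R\otimes_k H$ and $R'\otimes_K R'\cong R'\otimes_k H$ inside $\TotalQuot(R\otimes_K R')$ and showing that they differ by a point of $H$ rational over $k$, which generates the same $K$-subalgebra of $L$ and so forces $R=R'$. Establishing this last comparison cleanly is the main obstacle.
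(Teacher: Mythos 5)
Parts (1) and (2) of your plan are sound and essentially follow the route the paper takes (the paper simply defers these to Amano--Masuoka's Proposition 3.4): surjectivity of $\mu$ from the torsor condition, injectivity from lemma~\ref{lem:constants_of_simple_module_algebras_are_fields_and_linear_disjointness} applied after flat base change to $L$, and transport of the coalgebra structure of the pair groupoid $R\otimes_K R$ along $\mu$.

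Part (3) is where you have a genuine gap, and you acknowledge it yourself: the final step --- recovering $R$ inside $L$ from the intrinsic data $(H,\beta_L)$ by comparing trivializations ``up to a $k$-point of $H$'' --- is exactly the hard content, and it is not established by your sketch. It is also not clear it can be made to work as stated: knowing that $R$ and $R'$ induce the same coaction on $L$ does not by itself single out a subring, and the comparison inside $\TotalQuot(R\otimes_K R')$ is not obviously meaningful since $R\otimes_K R'$ need not be simple or reduced a priori. The paper's argument is different and avoids this entirely. Given two candidates $R_1,R_2$, one first observes that $R_1R_2$ again satisfies condition \ref{itm_defn_Picard-Vessiot_extension_torsor_property}, so one may assume $R_1\subseteq R_2$. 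Then $H_1\coloneqq(R_1\otimes_K R_1)^{\rho\otimes\rho}$ is a Hopf subalgebra of $H_2$, hence $H_2$ is faithfully flat over $H_1$ (Waterhouse, Ch.~14). Transporting this through the part~(1) isomorphisms $L\otimes_K R_i\cong L\otimes_k H_i$ and using that $L$ is faithfully flat over $K$ (proposition~\ref{prop:equivalent_conditions_to_Artinian_for_a_Noetherian_simple_D-module_algebra_with_injective_group_like_elements}), one deduces that $R_2$ is faithfully flat over $R_1$, whence $r_1R_1=r_1R_2\cap R_1$ for all $r_1\in R_1$. Since $R_2\subseteq L=\TotalQuot(R_1)$, any $r_2\in R_2$ satisfies $r_1r_2\in R_1$ for some non-zero divisor $r_1\in R_1$, so $r_1r_2\in r_1R_2\cap R_1=r_1R_1$ and $r_2\in R_1$. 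The two ingredients your proposal is missing are precisely the reduction to an inclusion via $R_1R_2$ and the faithful flatness of a Hopf algebra over a Hopf subalgebra; with those, the conclusion is an elementary denominator-clearing argument rather than a torsor comparison.
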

\begin{proof}
	This can be proven essentially as \cite[Proposition 3.4]{AmanoMasuoka:2005}, only the proof of \ref{itm:prop:fundamental_isomorphism_of_Picard-Vessiot_theory:uniqueness_of_principal_module_algebra} needing a slight adaption:

	If $R_1$ and $R_2$ satisfy condition \ref{itm_defn_Picard-Vessiot_extension_torsor_property} in definition \ref{defn_Picard-Vessiot_extension}, then $R_1 R_2$ satisfies it too.
	Therefore we can assume that $R_1$ is included in $R_2$.
	We define
	$H_1 \coloneqq (R_1 \otimes_K R_1)^{ \rho_{R_1}  \otimes  \rho_{R_1} }$
	and
	$H_2 \coloneqq (R_2 \otimes_K R_2)^{ \rho_{R_2}  \otimes  \rho_{R_2} }$.
	Then $H_1$ is a Hopf subalgebra of $H_2$ and thus $H_2$ is faithfully flat over $H_1$ by \cite[Chapter 14]{Waterhouse:1979}.
	Therefore $L \otimes_k H_2$ is faithfully flat over $L \otimes_k H_1$.
	By proposition~\ref{prop:equivalent_conditions_to_Artinian_for_a_Noetherian_simple_D-module_algebra_with_injective_group_like_elements}, the $K$-module $L$ is projective and so in particular flat.
	By \cite[Chapitre 1, \S 3.5, Proposition 9]{Bourbaki:1985:AlgebreCommutative:Ch1-4}, $L$ is also faithfully flat over $K$ and therefore $L \otimes_K R_1$ is faithfully flat over $R_1$ (cf. \cite[Chapitre 1, \S 3.3, Proposition 5]{Bourbaki:1985:AlgebreCommutative:Ch1-4}).
	Since by the above $L \otimes_K R_2 \cong L \otimes_k H_2$ is faithfully flat over $L \otimes_K R_1 \cong L \otimes_k H_1$, $L \otimes_K R_2$ is also faithfully flat over $R_1$ (cf. \cite[Chapitre I, \S 3.4, Proposition 7]{Bourbaki:1985:AlgebreCommutative:Ch1-4}) and hence $R_2$ is faithfully flat over $R_1$.
	Thus for each $r_1 \in R_1$ we have $r_1 R_1 = r_1 R_2 \cap R_1$.
	For any $r_2 \in R_2$ there exists a non-zero divisor $r_1 \in R_1$ such that $r_1 r_2 \in R_1$, since $R_2 \subseteq L = \TotalQuot(R_1)$, and so $r_1 r_2 \in r_1 R_2 \cap R_1 = r_1 R_1$.
	Therefore $r_2 \in R_1$, i.e. $R_2 = R_1$.
\end{proof}

\begin{defn}
	If $L| K$ is a Picard-Vessiot extension of Artinian simple commutative $D$-module algebras, then $R$ and $H$ in definition~\ref{defn_Picard-Vessiot_extension} are called the \emph{principal $D$-module algebra}\index{Picard-Vessiot extension!principal module algebra}\footnote{In Picard-Vessiot theory of differential and difference equations, this algebra is usually called \emph{Picard-Vessiot ring}.} and the \emph{Hopf algebra} of $L| K$, respectively.
	If we want to indicate $R$ and $H$, we denote the Picard-Vessiot extension $L | K$ also by $(L | K, R, H)$.
\end{defn}

\begin{defn}
	If $(L | K, R, H)$ is a Picard-Vessiot extension of Artinian simple commutative $D$-module algebras, then we define the \emph{Galois group scheme} \index{Galois group scheme}\index{Picard-Vessiot extension!Galois group scheme} $\operatorname{Gal}(L|K)$\index{$\operatorname{Gal}(L|K)$} of $L | K$ to be the affine group scheme $\operatorname{Spec}H$ over $L^{ \rho_{} }$.
	\label{defn_Galois_group_scheme_of_Picard-Vessiot_extension}
\end{defn}

\begin{prop}
	Let $(L | K, R, H)$ be a Picard-Vessiot extension of Artinian simple commutative $D$-module algebras with constants $k \coloneqq L^{ \rho_{} }$.
	Then for any commutative $k$-algebra $A$ the $A$-points of $\operatorname{Gal}(L|K) = \operatorname{Spec}H$ are isomorphic to the group of automorphisms of the $D$-module algebra $(R \otimes_k A,  \rho_{}  \otimes  \rho_{0} )$ that leave $K \otimes_k A$ fixed.
	\label{rmk_Galois_group_scheme_of_Picard-Vessiot_extensions}
\end{prop}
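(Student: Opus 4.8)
The plan is to produce a natural group isomorphism between $\operatorname{Gal}(L|K)(A)={\mathsf{CAlg}_{k}}(H,A)$, equipped with the convolution product $\phi*\psi = m_A\circ(\phi\otimes\psi)\circ\Delta_H$ coming from the Hopf structure of proposition~\ref{prop:fundamental_isomorphism_of_Picard-Vessiot_theory}, and the automorphism group in the statement, using the isomorphism $\mu$ of that proposition to make $R$ a right $H$-comodule algebra. Composing the inclusion $R\to R\otimes_K R$, $r\mapsto 1\otimes r$ — which is a homomorphism of $D$-module algebras from $(R,\rho_R)$ to $(R\otimes_K R,\rho_R\otimes\rho_R)$ since $d.(1\otimes r)=1\otimes(d.r)$ — with $\mu^{-1}$ yields a homomorphism of $D$-module algebras $\beta\colon R\to(R\otimes_k H,\rho_R\otimes\rho_0)$, written $\beta(r)=\sum r_{(0)}\otimes r_{(1)}$, for which $K$ lies in the coinvariants (as $1\otimes\kappa=\kappa\otimes1$ for $\kappa\in K$ forces $\beta(\kappa)=\kappa\otimes1_H$). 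Because the $H$-factor carries the trivial structure $\rho_0$, one has $d.(r_{(0)}\otimes r_{(1)})=\sum(d.r_{(0)})\otimes r_{(1)}$, and this is the feature that makes all the maps below $D$-equivariant.

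First I would define $\Theta$. For $\phi\in{\mathsf{CAlg}_{k}}(H,A)$ put $\sigma_\phi(r\otimes a)=\sum r_{(0)}\otimes\phi(r_{(1)})a$. One checks that $\sigma_\phi$ is a homomorphism of $k$-algebras (as $\beta$ and $\phi$ are), that it is a homomorphism of $D$-module algebras (using the displayed identity above and the trivial $D$-action on $A$), and that it fixes $K\otimes_k A$ pointwise (since $\beta(\kappa)=\kappa\otimes1_H$). The comodule axioms for $\beta$ together with the Hopf structure of $H$ give $\sigma_\phi\circ\sigma_\psi=\sigma_{\phi*\psi}$ (coassociativity), $\sigma_{\eta_A\circ\varepsilon_H}=\operatorname{id}$ (counit) and hence $\sigma_\phi\circ\sigma_{\phi\circ S}=\operatorname{id}$ (antipode $S$); thus each $\sigma_\phi$ is an automorphism and $\Theta\colon\phi\mapsto\sigma_\phi$ is a homomorphism of groups into the automorphism group.

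The harder direction is surjectivity, i.e. inverting $\Theta$. Given a $D$-module algebra automorphism $\sigma$ of $R\otimes_k A$ fixing $K\otimes_k A$, note that $\sigma$ is $A$-linear (since $1\otimes A\subseteq K\otimes_k A$ is fixed), hence determined by $\tilde\sigma\colon R\to R\otimes_k A$, $\tilde\sigma(r)=\sigma(r\otimes1)$, which is a $D$-module $K$-algebra homomorphism. Then $g\colon R\otimes_K R\to R\otimes_k A$, $a\otimes b\mapsto(a\otimes1)\cdot\tilde\sigma(b)$, is a well-defined $R$-algebra and $D$-module algebra homomorphism. The crucial step is a computation of constants: since $R^{\rho}=k$ (because $k=K^{\rho}=L^{\rho}$ and $R\subseteq L$) and $k$ is a field, expanding elements of $R\otimes_k A$ along a $k$-basis of $A$ shows $(R\otimes_k A)^{\rho_R\otimes\rho_0}=k\otimes_k A$. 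As $g$ is $D$-equivariant it carries $H=(R\otimes_K R)^{\rho_R\otimes\rho_R}$ into $k\otimes_k A\cong A$, so $\phi_\sigma\coloneqq g|_H\colon H\to A$ is a homomorphism of $k$-algebras.

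Finally I would verify that $\phi\mapsto\sigma_\phi$ and $\sigma\mapsto\phi_\sigma$ are mutually inverse; both reduce to the identity $g\circ\mu=\operatorname{id}_R\otimes\phi_\sigma$, which holds because $g$ is $R$-linear and $g(h)=1\otimes\phi_\sigma(h)$ for $h\in H$. Evaluating at $1\otimes b$ gives $\sigma(b\otimes1)=g(1\otimes b)=\sum b_{(0)}\otimes\phi_\sigma(b_{(1)})=\sigma_{\phi_\sigma}(b\otimes1)$, whence $\sigma=\sigma_{\phi_\sigma}$ by $A$-linearity; running the same computation for $\sigma=\sigma_\phi$ yields $g\circ\mu=\operatorname{id}_R\otimes\phi$, hence $\phi_{\sigma_\phi}=\phi$. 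I expect the main obstacle to be precisely the constants computation $(R\otimes_k A)^{\rho_R\otimes\rho_0}=k\otimes_k A$ and the bookkeeping showing that $g$ lands in the constants and satisfies $g\circ\mu=\operatorname{id}_R\otimes\phi_\sigma$; once these are settled, the equivariance, group-homomorphism and inverse statements are routine Sweedler-notation manipulations anchored in proposition~\ref{prop:fundamental_isomorphism_of_Picard-Vessiot_theory}.
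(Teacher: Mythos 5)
Your argument is correct and is precisely the standard comodule-algebra argument that the paper delegates to by citing \cite[Theorem 3.6.1]{Amano:2005}: one uses $\mu^{-1}$ to make $R$ a right $H$-comodule algebra, sends $\phi \in {\mathsf{CAlg}_{k}}(H,A)$ to $\sigma_\phi = (\id_R \otimes (m_A \circ (\phi \otimes \id_A))) \circ (\beta \otimes \id_A)$, and inverts this by restricting $a \otimes b \mapsto (a\otimes 1)\sigma(b\otimes 1)$ to $H$ after computing $(R\otimes_k A)^{ \rho_{} \otimes \rho_{0} } = k \otimes_k A$. All the key steps you flag (the constants computation, which uses $R^{ \rho_{} }=k$ and freeness of $A$ over the field $k$, and the identity $g\circ\mu = \id_R\otimes\phi_\sigma$) are carried out correctly, so your proposal matches the intended proof.
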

\begin{proof}
	This can be proven as \cite[Theorem 3.6.1]{Amano:2005}.
\end{proof}

\begin{prop}
\label{prop:properties_of_principal_module_algebra_of_a_Picard-Vessiot_extension_of_AS_module_algebras}
  Let $(L | K, R, H)$ be a Picard-Vessiot extension of Artinian simple commutative $D$-module algebras.
  \begin{enumerate}
    \item Then $R$ is a simple $D$-module algebra.
    \item The ring $R$ is isomorphic to $\prod_{P \in \Omega(L)} R/(P \cap R)$.
  \end{enumerate}
\end{prop}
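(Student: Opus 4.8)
The plan is to deduce both statements from the fundamental isomorphism $\mu\colon R\otimes_k H\xrightarrow{\sim} R\otimes_K R$ of proposition~\ref{prop:fundamental_isomorphism_of_Picard-Vessiot_theory}, together with the simplicity of $K$ and $L$ as $D$-module algebras. I first record that $R^{\rho_R}=k$: since $k=K^{\rho_K}=L^{\rho_L}$ by the definition of a Picard-Vessiot extension and $k\subseteq K\subseteq R\subseteq L=\TotalQuot(R)$, the constants $R^{\rho_R}$ lie between $k$ and $L^{\rho_L}=k$. The main work is part~(1); part~(2) will then follow formally by re-running the argument of proposition~\ref{prop:statements_on_Noetherian_simple_D-module_algebras}.

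For part~(1), let $I$ be a nonzero $D$-stable ideal of $R$; I must show $I=R$. First I would show that the ideal $LI$ of $L$ generated by $I$ is again $D$-stable. Indeed, for $l\in L$, $a\in I$, $g\in G$ and $d\in D^1$ one has $(d\# g).(la)=\sum_{(d)}(d_{(1)}.g(l))\,(d_{(2)}.g(a))$, and here $g(a)\in I$ (as $I$ is $(1\# g)$-stable), hence $d_{(2)}.g(a)\in I$ (as $I$ is $D^1$-stable), so each summand lies in $L\cdot I=LI$. Since $L$ is a simple $D$-module algebra and $LI\neq 0$, it follows that $LI=L$; writing $1=\sum_j l_j a_j$ with $a_j\in I$ and clearing a common non-zero-divisor denominator $s\in R$ yields $s=\sum_j(sl_j)a_j\in I$. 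Thus every nonzero $D$-stable ideal of $R$ contains a non-zero-divisor. It remains to promote this to $1\in I$. Here I would transport $I$ through $\mu$ and use the associated coaction $\beta\colon R\to R\otimes_k H$, $\beta(r)=\mu^{-1}(1\otimes r)$, which is a homomorphism of $D$-module algebras when $H$ carries the trivial $D$-action and whose coinvariants are $K$. A minimal-length argument---choose $0\neq x\in I$ whose image $\beta(x)=\sum_{i=1}^{n} r_i\otimes h_i$ has $n$ minimal with the $h_i$ linearly independent over $k$, normalise using the non-zero-divisor just produced, and apply elements of $D$ to shorten---forces the coefficients $r_i$ into $R^{\rho_R}=k$, so that $x\in K$. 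Then $0\neq x\in I\cap K$, and since $I\cap K$ is a $D$-stable ideal of the simple $D$-module algebra $K$, we get $I\cap K=K$, i.e.\ $1\in I$ and $I=R$.

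For part~(2) I would first note that $R$, being a subring of the reduced ring $L$, is reduced, and that its minimal primes are exactly the contractions $Q\cap R$ for $Q\in\Omega(L)$: the primes of $L=\TotalQuot(R)$ correspond to the primes of $R$ disjoint from the non-zero-divisors, which by prime avoidance are precisely the minimal primes of $R$. In particular $\Omega(R)$ is finite, since $L$ is Artinian. This finiteness is the only place where proposition~\ref{prop:statements_on_Noetherian_simple_D-module_algebras} used its Noetherian hypothesis, so with $\Omega(R)$ finite, $R$ simple by part~(1), and each $g\in G$ acting injectively on $R\subseteq L$, its proof applies verbatim: every $P\in\Omega(R)$ is $D^1$-stable, each $R/P$ is a simple $D(G_{\Omega(R)})$-module algebra, distinct minimal primes are comaximal, and the Chinese remainder theorem gives $R\cong\prod_{P\in\Omega(R)}R/P$. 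Identifying $\Omega(R)$ with $\Omega(L)$ via $Q\mapsto Q\cap R$ rewrites this as $R\cong\prod_{Q\in\Omega(L)}R/(Q\cap R)$.

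The hard part will be the final normalisation step in part~(1): because $G$ acts only by injective endomorphisms rather than automorphisms, one cannot invert group-like elements, and the classical minimal-length argument must be rephrased so that the division needed to normalise $\beta(x)$ is carried out using the non-zero-divisor contained in $I$ (whose existence is secured by the $D$-stability of $LI$) rather than by inverting an element of $G$. This is precisely the point at which the argument of Amano and Masuoka requires adaptation beyond the Hopf-algebra setting.
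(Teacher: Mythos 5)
Your part~(2) is essentially the paper's own argument (injectivity of $R\to\prod_{Q\in\Omega(L)}R/(Q\cap R)$ from reducedness, comaximality of distinct contractions via simplicity of $R$ as in proposition~\ref{prop:statements_on_Noetherian_simple_D-module_algebras}, then the Chinese remainder theorem), and the opening of your part~(1) is correct and worth having: $LI$ is indeed $D$-stable (your smash-product computation is right), so $LI=L$ by simplicity of $L$, and clearing denominators puts a non-zero-divisor into $I$. The paper itself proves (1) only by citation to Amano--Masuoka, so an explicit argument would be a genuine addition --- but yours does not close.

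The gap is exactly where you flag it, and the fix you propose does not work. To run the shortening step of the minimal-length argument you must first arrange that the leading coefficient of $\beta(x)=\sum_{i}r_i\otimes h_i$ is $1$ (or at least a constant), since only then does $d.x-\varepsilon_D(d)x$ have strictly smaller length. Normalising requires \emph{inverting} $r_1$; possessing a non-zero-divisor $s\in I$ gives you no way to do this inside $R$ ($s$ is not a unit, and $r_1$ is a different, possibly zero-dividing, element). The usual escape --- observing that the leading coefficients of minimal-length elements of the ideal $\mu^{-1}(R\otimes_K I)$ form a $D$-stable ideal of $R$ and hence contain $1$ --- presupposes that $R$ is simple, which is what you are proving. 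Passing to $L$ removes the circularity but changes the conclusion: you then learn that $\mu^{-1}(L\otimes_K I)=L\otimes_k\mathfrak{h}$ for an ideal $\mathfrak{h}\unlhd H$, which says nothing about an individual $x\in I$ lying in $K$; indeed the claim that a minimal-length element of $I$ must lie in $K$ is not the right intermediate target. The standard way to finish (this is the content of the cited Amano--Masuoka argument) is: having identified $L\otimes_K I$ with $\mu(L\otimes_k\mathfrak{h})$ via simplicity of $L$ and lemma~\ref{lem:constants_of_simple_module_algebras_are_fields_and_linear_disjointness}, apply the multiplication map $R\otimes_K R\to R$, under which $\mu(a\otimes h)\mapsto a\,\varepsilon_H(h)$; one gets $R\cdot\varepsilon_H(\mathfrak{h})\subseteq I$ and $L\cdot\varepsilon_H(\mathfrak{h})=LI\neq 0$, so $\varepsilon_H(\mathfrak{h})$ is the nonzero ideal $k$ of the field $k$ and $I=R$ (alternatively one uses the antipode of $H$ to show $1\in\mathfrak{h}$ and then descends by faithful flatness of $L$ over $K$). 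Without a step of this kind your proof of~(1) is incomplete, and since~(2) relies on~(1) for comaximality, the gap propagates.
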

\begin{proof}
\begin{enumerate}
  \item This can be proven as \cite[Corollary 3.12]{AmanoMasuoka:2005} (cf. also \cite[Proposition 3.5.9]{Amano:2005}).
  In its proof \cite[Proposition 3.10 (i)]{AmanoMasuoka:2005}
  is used, which also holds in our generalized setting.
  \item The ring $L$ is a localization of $R$ and thus $\Omega(L) \to \Omega(R), P \mapsto P \cap R$ is an injection.
    We obtain an injective homomorphism
    \begin{equation}
    \label{eqn:prop:properties_of_principal_module_algebra_of_a_Picard-Vessiot_extension_of_AS_module_algebras:homomorphism_to_product_of_quotients}
    R \to \prod_{P \in \Omega(L)} R/(P \cap R).
    \end{equation}
    If for minimal prime ideals $P, Q \in \Omega(L)$, the ideal $J \coloneqq (P \cap R) + (Q \cap R)$ would be strictly smaller than $R$, then as in the proof of \ref{prop:statements_on_Noetherian_simple_D-module_algebras} \ref{minimal_prime_ideals_D1_stable}, using that $R$ is simple as a $D$-module algebra, one can show that $J = P \cap R = Q \cap R$, i.e. that $P=Q$.
    Therefore \eqref{eqn:prop:properties_of_principal_module_algebra_of_a_Picard-Vessiot_extension_of_AS_module_algebras:homomorphism_to_product_of_quotients} is an isomorphism.
\end{enumerate}
\end{proof}

\begin{prop}
\label{prop:quotients_of_PV_extensions}
Let $(L | K, R, H)$ be a Picard-Vessiot extension of Artinian simple commutative $D$-module algebras.
Let further $\Omega(K) = \{ P_1, \dots, P_{m} \}$ and for all $i \in \{1, \dots, m\}$ let $Q_{i,1}, \dots, Q_{i,r_i}$ be the minimal prime ideals of $L$ lying over $P_i$.
We define
$L_i \coloneqq \prod_{j=1}^{r_i} L/Q_{i,j}$,
$R_i \coloneqq \prod_{j=1}^{r_i} R/(Q_{i,j} \cap R)$,
$K_i \coloneqq K/P_i$,
$L_{i,j} \coloneqq L/Q_{i,j}$
and $R_{i,j} \coloneqq R/(Q_{i,j} \cap R)$ for all $j \in \{1, \dots, r_i\}$.
We denote by $k$ the field of constants of $L$.
Then $(L_i | K_i, R_i, H)$ is a Picard-Vessiot extension of Artinian simple $D(G_{P_i})$-module algebras for all $i \in \{1, \dots, m\}$.
\end{prop}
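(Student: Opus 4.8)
The plan is to verify the three conditions of definition~\ref{defn_Picard-Vessiot_extension} for the datum $(L_i \mid K_i, R_i, H)$ with respect to the bialgebra $D(G_{P_i})$. I first record the structural input. Contraction $Q \mapsto Q \cap K$ defines a $G$-equivariant map $\Omega(L) \to \Omega(K)$: it lands in $\Omega(K)$ because $L$ is faithfully flat over $K$, as established in the proof of proposition~\ref{prop:fundamental_isomorphism_of_Picard-Vessiot_theory}, so minimal primes contract to minimal primes by going-down. Since $G$ acts transitively on both $\Omega(L)$ and $\Omega(K)$ by proposition~\ref{prop:statements_on_Noetherian_simple_D-module_algebras}~\ref{itm:prop:statements_on_Noetherian_simple_D-module_algebras:action_transitive}, an orbit count shows that the stabilizer $G_{P_i}$ acts transitively on the fibre $\{Q_{i,1}, \dots, Q_{i,r_i}\}$. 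Each $Q_{i,j}$ is $D^1$-stable by proposition~\ref{prop:statements_on_Noetherian_simple_D-module_algebras}~\ref{minimal_prime_ideals_D1_stable} and $G_{P_i}$ permutes the $Q_{i,j}$, so $K_i$, $R_i$ and $L_i$ inherit $D(G_{P_i})$-module algebra structures with $K_i \subseteq R_i \subseteq L_i$; moreover $L_i$ is a finite product of the fields $L_{i,j}$, hence Artinian, and the transitivity of $G_{P_i}$ on the factors shows that it has no nontrivial $D(G_{P_i})$-stable ideals, i.e.\ it is Artinian simple. Condition~(1) for $(L_i \mid K_i)$ is immediate: each $g \in G_{P_i}$ fixes the idempotent of $L$ cutting out the block $L_i$ (as an injective endomorphism of a finite product of fields, $g$ permutes the primitive idempotents within the fibre), hence stabilises $L_i$, and its restriction is injective because $g$ is injective on $L$.

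The technical heart, which I expect to be the main obstacle, is a \emph{constants are preserved along blocks} statement. Let $A$ be a $D$-module algebra that decomposes as a finite product $A = \prod_{i'} A_{i'}$ of blocks indexed by $\Omega(K)$, permuted transitively by $G$, and such that every $g \in G$ with $g^{-1}(P_s) = P_{i'}$ induces an \emph{injective} homomorphism $A_{i'} \to A_s$. Then the block projection $\pi_i \colon A^{D} \to A_i^{D(G_{P_i})}$ is bijective. Injectivity holds because a constant $a = (a_{i'})_{i'}$ is determined by $a_i$ alone: for each $s$ one chooses $g$ with $g^{-1}(P_s) = P_i$, and $G$-invariance forces $a_s$ to be the image of $a_i$ under the map induced by $g$. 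For surjectivity I would spread a given $b \in A_i^{D(G_{P_i})}$ by declaring $a_s$ to be the image of $b$ under any $g$ with $g^{-1}(P_s) = P_i$. The crux is well-definedness: if $g_1, g_2$ both satisfy $g_\ell^{-1}(P_s) = P_i$, pick $h$ with $h^{-1}(P_i) = P_s$; then $hg_1, hg_2 \in G_{P_i}$, so each sends $b$ to $b$, which says that the injective map $A_s \to A_i$ induced by $h$ carries the images of $b$ under $g_1$ and $g_2$ to the same element, whence those images coincide. The resulting tuple is $G$-invariant (a further application of the same device) and $D^1$-constant on each block, since the $G$-action preserves the counit of $D^1$; thus it lies in $A^{D}$ and projects to $b$.

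It then remains to assemble the three conditions. Applying this statement to $A = K$ and $A = L$ (the hypotheses hold because each $g \in G$ is injective on $L$, hence on each block) yields isomorphisms $k = K^{D} \xrightarrow{\sim} K_i^{D(G_{P_i})}$ and $k = L^{D} \xrightarrow{\sim} L_i^{D(G_{P_i})}$ that are compatible with $K_i \hookrightarrow L_i$; as $L^{D} = K^{D}$, this forces $K_i^{D(G_{P_i})} = L_i^{D(G_{P_i})}$, which is condition~(2). For condition~\ref{itm_defn_Picard-Vessiot_extension_torsor_property} I would take $R_i$ as the principal algebra. Its total ring of fractions is $\TotalQuot(R_i) = \prod_j \TotalQuot(R_{i,j}) = \prod_j L_{i,j} = L_i$, using $\TotalQuot(R) = L$ and the factorwise description. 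The orthogonal idempotents of $K = \prod_{i'} K_{i'}$ give a decomposition $R \otimes_K R \cong \prod_{i'} (R_{i'} \otimes_{K_{i'}} R_{i'})$ of $D$-module algebras, and under the isomorphism $\mu$ of proposition~\ref{prop:fundamental_isomorphism_of_Picard-Vessiot_theory} this matches $\prod_{i'} (R_{i'} \otimes_k H)$, on which $G$ acts only through the left tensor factor; in particular the block-to-block maps are injective, so the constants statement applied to $A = R \otimes_K R$ gives an isomorphism $H = (R \otimes_K R)^{D} \xrightarrow{\sim} (R_i \otimes_{K_i} R_i)^{D(G_{P_i})} =: H_i$, identifying the Hopf algebra of the block with $H$. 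Finally, projecting the identity $R \cdot H = R \otimes_K R$ along the surjective ring homomorphism onto the $i$-th block and using $\pi_i(R) = R_i$ and $\pi_i(H) = H_i$ produces $R_i \cdot H_i = R_i \otimes_{K_i} R_i$. Hence $(L_i \mid K_i, R_i, H)$ satisfies all the conditions and is a Picard-Vessiot extension of Artinian simple $D(G_{P_i})$-module algebras.
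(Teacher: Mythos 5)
Your overall strategy is sound and, where it overlaps with the paper's proof, essentially agrees with it: the paper also obtains $\TotalQuot(R_i)=L_i$ by componentwise localization, handles the torsor condition through the block decomposition of $R\otimes_K R\cong R\otimes_k H$ (it descends $\mu$ to an isomorphism $\mu_i\colon R_i\otimes_k H\to R_i\otimes_{K_i}R_i$ rather than projecting the identity $R\cdot H=R\otimes_K R$), and simply asserts the chain $L_i^{D(G_{P_i})}=L^D=K^D=K_i^{D(G_{P_i})}$. You additionally verify points the paper leaves implicit (that $L_i$ and $K_i$ are Artinian simple $D(G_{P_i})$-module algebras, and condition (1)), and those checks are correct.

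The one genuine gap is in the surjectivity half of your ``constants along blocks'' lemma, at the assertion that the spread-out tuple $a$ with $a_s=\bar{g}(b)$ is $D^1$-constant ``since the $G$-action preserves the counit of $D^1$''. The map $\bar{g}\colon A_i\to A_s$ is only $D^1$-semilinear, $\bar{g}(d'.x)=(g.d').\bar{g}(x)$, so your argument yields $d.\bar{g}(b)=\varepsilon(d)\bar{g}(b)$ only for $d$ in the image $g.D^1\subseteq D^1$; assumption~\ref{ass:bialgebra_D} only makes each $g$ act on $D^1$ as a bialgebra endomorphism, not an automorphism, so this need not exhaust $D^1$, and the constancy of $a_s$ for the remaining $d$ is exactly what is at stake. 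The claim is nevertheless true, but closing the gap requires an extra input, namely the $D$-simplicity of $A$ (available in all three of your applications $A=K$, $A=L$, $A=R\otimes_K R$): once $G$-invariance of $a$ is established, the $C$-span $V$ of $\{d.a-\varepsilon(d)a\mid d\in D^1\}$ is stable under the action of all of $D$ (one computes $(d'\# h).(d.a-\varepsilon(d)a)=\bigl(d'(h.d).a-\varepsilon(d'(h.d))a\bigr)-\varepsilon(d)\bigl(d'.a-\varepsilon(d')a\bigr)$ using $(1\# h).a=a$), hence the ideal $A\cdot V$ is $D$-stable; every generator has vanishing $i$-th component because $a_i=b$ is $D^1$-constant, so $A\cdot V\subseteq\Kernel(\pi_i)\neq A$, whence $A\cdot V=0$ and $a\in A^{D}$. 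With this supplement (or under the additional hypothesis that $G$ acts on $D^1$ by automorphisms) your lemma holds and the rest of the proof goes through; note also that the torsor condition only needs the inclusion $\pi_i(H)\subseteq H_i$, so the full bijectivity of $H\to H_i$ is used only to identify the Hopf algebra of the block with $H$.
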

\begin{proof}
  For each $i \in \{1, \dots, m\}$ we denote by $\pi_i \colon R \to R_i$ the canonical homomorphism.
  Since $L$ has Krull dimension $0$, we see that $Q_{i,j_1} + Q_{i,j_2} = L$ and thus $(Q_{i,j_1} \cap R) + (Q_{i,j_2} \cap R) = R$ for all $j_1 \neq j_2 \in \{1, \dots, r_i\}$.
  Hence $\pi_i$ is surjective.
  If we denote by $\mu_i$ the homomorphism $R_i \otimes_k H \to R_i \otimes_{K_i} R_i, r_i \otimes h \mapsto (r_i \otimes 1) h$, then the diagram
  \begin{align*}
    \xymatrix{R \otimes_k H \ar[d]^{\pi_i \otimes \id} \ar[r]^\mu & R \otimes_K R \ar[d]^{\pi_i \otimes \pi_i} \\
    R_i \otimes_{k} H \ar[r]^{\mu_i} & R_i \otimes_{K_i} R_i
    }
  \end{align*}
  commutes and since $\mu$ is an isomorphism, $\mu_i$ is an isomorphism too.
  We note that also $L_i^{D(G_{P_i})} = L^D = K^D = K_i^{D(G_{P_i})}$.
  Finally, also $\TotalQuot(R_i) = L_i$, since if $S$ is the set of non-zero divisors of $R$, then $(S^{-1}R)/S^{-1}(Q_{i,j} \cap R) \cong \bar{S}^{-1}(R/(Q_{i,j} \cap R))$, where $\bar{S}$ is the image of $S$ under $R \to R/(Q_{i,j} \cap R)$.
  We note that $S^{-1}(Q_{i,j} \cap R) = Q_{i,j}$ and that $\bar{S}$ is the set of non-zero divisors of $R_{i,j}$.
  Therefore $\TotalQuot(R_{i,j}) \cong L_{i,j}$ and thus $\TotalQuot(R_i) \cong L_i$.
\end{proof}

\begin{prop}
\label{prop:smoothness_of_Picard-Vessiot_extensions}
	Let $(L | K, R, H)$ be a Picard-Vessiot extension of Artinian simple commutative $D$-module algebras such that $R$ is a finitely generated $K$-algebra and $H$ is a finitely generated $k$-algebra.
	Then with the notation of proposition~\ref{prop:quotients_of_PV_extensions} we have:
	\begin{enumerate}
	\item
	\label{prop:smoothness_for_PV:conditions_for_product}
	The following conditions are equivalent:
	\begin{enumerate}
		\item \label{prop:smoothness_for_PV:R_smooth_over_K} $R$ is smooth over $K$,
		\item \label{prop:smoothness_for_PV:R1_smooth_over_K1} $R_i$ is smooth over $K_i$ for all $i \in \{1, \dots, m\}$,
		\item \label{prop:smoothness_for_PV:H_smooth_over_k} $H$ is smooth over $k$ and
		\item \label{prop:smoothness_for_PV:GalLK_smooth_over_k} $\operatorname{Gal}(L|K)$ is smooth over $k$.
	\end{enumerate}
	\item
	\label{prop:smoothness_for_PV:conditions_for_factors}
	The following conditions are also equivalent:
	\begin{enumerate}[resume]
		\item \label{prop:smoothness_for_PV:L1_separable_over_K1} $L_i$ is separable over $K_i$ for all $i \in \{1, \dots, m\}$,
		\item \label{prop:smoothness_for_PV:R1_separable_over_K1} $R_i$ is separable over $K_i$ for all $i \in \{1, \dots, m\}$ and
		\item \label{prop:smoothness_for_PV:H_otimes_k_bar_reduced} $\bar{k} \otimes_k H$ is reduced.
	\end{enumerate}
	\item
	The equivalent conditions
	in \ref{prop:smoothness_for_PV:conditions_for_product} imply those of \ref{prop:smoothness_for_PV:conditions_for_factors}.
	If $k$ is perfect, then the converse holds and these conditions are further equivalent to the following:
	\begin{enumerate}[resume]
		\item \label{prop:smoothness_for_PV:H_reduced} $H$ is reduced and
		\item \label{prop:smoothness_for_PV:R_otimesK_R_reduced} $R \otimes_K R$ is reduced.
	\end{enumerate}
	\end{enumerate}
\end{prop}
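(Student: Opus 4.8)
The plan is to deduce everything from the torsor isomorphism $\mu\colon R\otimes_k H \xrightarrow{\sim} R\otimes_K R$ of proposition~\ref{prop:fundamental_isomorphism_of_Picard-Vessiot_theory}, its factorwise versions $\mu_i\colon R_i\otimes_k H \xrightarrow{\sim} R_i\otimes_{K_i} R_i$ from proposition~\ref{prop:quotients_of_PV_extensions}, and the product decompositions $K\cong\prod_i K_i$ and $R\cong\prod_i R_i\cong\prod_{i,j} R_{i,j}$ furnished by proposition~\ref{prop:properties_of_principal_module_algebra_of_a_Picard-Vessiot_extension_of_AS_module_algebras} and proposition~\ref{prop:quotients_of_PV_extensions}. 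Two facts are used throughout. First, each $R_{i,j}=R/(Q_{i,j}\cap R)$ is a domain, so $R$ is reduced. Second, since $K$ is semisimple by proposition~\ref{prop:equivalent_conditions_to_Artinian_for_a_Noetherian_simple_D-module_algebra_with_injective_group_like_elements}, the $K$-algebra $R$ is projective, hence flat, and it is faithfully flat because $R\otimes_K K_i\cong R_i\neq 0$ for every $i$; likewise $R$ is faithfully flat over the field $k$. I will use freely that smoothness and geometric reducedness are stable under base change and descend along faithfully flat base change, and that over an algebraically closed field reducedness is preserved and reflected under extension of the base field.

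For part~\ref{prop:smoothness_for_PV:conditions_for_product}, the equivalence of \ref{prop:smoothness_for_PV:R_smooth_over_K} and \ref{prop:smoothness_for_PV:R1_smooth_over_K1} is immediate from $R\cong\prod_i R_i$ and $K\cong\prod_i K_i$, smoothness of a finite product over a finite product being detected factorwise, and \ref{prop:smoothness_for_PV:H_smooth_over_k}$\Leftrightarrow$\ref{prop:smoothness_for_PV:GalLK_smooth_over_k} is the definition $\operatorname{Gal}(L|K)=\operatorname{Spec}H$. To connect \ref{prop:smoothness_for_PV:R_smooth_over_K} and \ref{prop:smoothness_for_PV:H_smooth_over_k} I regard $R\otimes_K R$ as an $R$-algebra via its first factor: as such it is the base change of $R$ over $K$ along $K\to R$, so by faithful flatness of $K\to R$ the algebra $R$ is smooth over $K$ if and only if $R\otimes_K R$ is smooth over $R$. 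Transporting this through $\mu$ identifies $R\otimes_K R$ with $R\otimes_k H$, the base change of $H$ over $k$ along $k\to R$; by faithful flatness of $k\to R$ this is smooth over $R$ if and only if $H$ is smooth over $k$. Chaining the two equivalences yields \ref{prop:smoothness_for_PV:R_smooth_over_K}$\Leftrightarrow$\ref{prop:smoothness_for_PV:H_smooth_over_k}.

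For part~\ref{prop:smoothness_for_PV:conditions_for_factors}, \ref{prop:smoothness_for_PV:L1_separable_over_K1}$\Leftrightarrow$\ref{prop:smoothness_for_PV:R1_separable_over_K1} follows because $L_i=\TotalQuot(R_i)$ is a localization of $R_i$: tensoring the inclusion $R_i\hookrightarrow L_i$ with the flat $\overline{K_i}$ shows that geometric reducedness of $R_i$ over $K_i$ implies that of $L_i$, while reducedness passes from $\overline{K_i}\otimes_{K_i} L_i$ to its subring $\overline{K_i}\otimes_{K_i} R_i$. For \ref{prop:smoothness_for_PV:R1_separable_over_K1}$\Leftrightarrow$\ref{prop:smoothness_for_PV:H_otimes_k_bar_reduced} I fix $i$ and a factor, and base change $\mu_i$ along a homomorphism $R_i\to\overline{L_{i,j}}$ on the first factor, obtaining $\overline{L_{i,j}}\otimes_k H\cong\overline{L_{i,j}}\otimes_{K_i} R_i$. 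Since $\overline{L_{i,j}}$ contains the algebraically closed fields $\bar k$ and $\overline{K_i}$, the left-hand side is reduced if and only if $\bar k\otimes_k H$ is reduced, whereas the right-hand side $\prod_{j'}\overline{L_{i,j}}\otimes_{K_i} R_{i,j'}$ is reduced if and only if each $R_{i,j'}$ is geometrically reduced over $K_i$. As \ref{prop:smoothness_for_PV:H_otimes_k_bar_reduced} is independent of $i$, this gives \ref{prop:smoothness_for_PV:R1_separable_over_K1}$\Leftrightarrow$\ref{prop:smoothness_for_PV:H_otimes_k_bar_reduced} for all $i$ at once.

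That part~\ref{prop:smoothness_for_PV:conditions_for_product} implies part~\ref{prop:smoothness_for_PV:conditions_for_factors} is clear, as smoothness entails geometric regularity and hence geometric reducedness, so \ref{prop:smoothness_for_PV:R_smooth_over_K}$\Rightarrow$\ref{prop:smoothness_for_PV:R1_separable_over_K1}. Assume now $k$ perfect. Then a finitely generated $k$-algebra is reduced precisely when geometrically reduced, giving \ref{prop:smoothness_for_PV:H_otimes_k_bar_reduced}$\Leftrightarrow$\ref{prop:smoothness_for_PV:H_reduced}; and through $\mu$, using $H\hookrightarrow R\otimes_k H$ for one direction and, for the other, that $R$ is reduced and $H$ geometrically reduced, one gets \ref{prop:smoothness_for_PV:H_reduced}$\Leftrightarrow$\ref{prop:smoothness_for_PV:R_otimesK_R_reduced}. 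The crucial step is that a reduced group scheme of finite type over a perfect field is smooth: applied to $\operatorname{Spec}H$ this turns \ref{prop:smoothness_for_PV:H_reduced} into \ref{prop:smoothness_for_PV:H_smooth_over_k}, closing the cycle and proving the converse together with the equivalence of \ref{prop:smoothness_for_PV:H_reduced} and \ref{prop:smoothness_for_PV:R_otimesK_R_reduced}. This last implication, recovering smoothness from mere reducedness of the Hopf algebra, is the main obstacle: it rests on the homogeneity of the group scheme under translation and genuinely needs the perfectness of $k$ to upgrade reducedness to geometric reducedness, whereas the descent arguments in parts~\ref{prop:smoothness_for_PV:conditions_for_product} and \ref{prop:smoothness_for_PV:conditions_for_factors} are routine.
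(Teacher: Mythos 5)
Your proof is correct and follows essentially the same route as the paper: faithfully flat descent of smoothness through the torsor isomorphism $R\otimes_k H\cong R\otimes_K R$ for part (1), base change of the factorwise torsor isomorphisms to an algebraic closure for part (2), and the smoothness criterion for group schemes over a perfect field (Cartier/Waterhouse) to close the cycle in part (3). The only deviations are cosmetic: you obtain the separability of $R_i$ directly from smoothness rather than via formal smoothness of $L_{i,j}$ and Cohen's theorem, and you base change along $R_i\to\overline{L_{i,j}}$ instead of along a residue field at a maximal ideal as the paper does.
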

\begin{proof}

We first prove \ref{prop:smoothness_for_PV:GalLK_smooth_over_k}
$\Leftrightarrow$ \ref{prop:smoothness_for_PV:H_smooth_over_k}
$\Leftrightarrow$ \ref{prop:smoothness_for_PV:R_smooth_over_K}
$\Leftrightarrow$ \ref{prop:smoothness_for_PV:R1_smooth_over_K1}
$\Rightarrow$ \ref{prop:smoothness_for_PV:L1_separable_over_K1}
$\Leftrightarrow$ \ref{prop:smoothness_for_PV:R1_separable_over_K1}
$\Leftrightarrow$ \ref{prop:smoothness_for_PV:H_otimes_k_bar_reduced}:

The equivalence of \ref{prop:smoothness_for_PV:H_smooth_over_k} and \ref{prop:smoothness_for_PV:GalLK_smooth_over_k} is clear.

By \cite[Chapitre IV, 17.7.3 (ii)]{EGA4-4_IHES:1967}, $R \otimes_K R$ is smooth over $R$ if and only if $R$ is smooth over $K$.
In the same way $H$ is smooth over $k$ if and only if $R \otimes_k H$ is smooth over $R$.
Since $R \otimes_K R \cong R \otimes_k H$, we conclude that $R$ is smooth over $K$ if and only if $H$ is smooth over $k$.

Since smoothness is a local property (cf. \cite[Chapitre IV, 17.3.2 (iii)]{EGA4-4_IHES:1967}) and since $R \cong \prod_{i=1}^m R_i$ and $K \cong \prod_{i=1}^m K_i$, $R$ is smooth over $K$ if and only if $R_{i}$ is smooth over $K_{i}$ for all $i \in \{1, \dots, m \}$.

If $R_i$ is smooth over $K_i$, then $L_i$ is formally smooth over $K_i$, since as a localization $L_i$ is formally étale over $R_i$.
By \cite[Chapitre IV, 17.1.6]{EGA4-4_IHES:1967} we see that $L_i$ is formally smooth over $K_i$ if and only if $L_{i,j}$ is formally smooth over $K_i$ for all $j \in \{1, \dots, r_i\}$.
By the theorem of Cohen (cf. \cite[Chapitre 0, 19.6.1]{EGA4-1_IHES:1964}) $L_{i,j}$ is formally smooth over $K_i$ if and only if it is separable over $K_i$.
The product $L_i = \prod_{j=1}^{r_i} L_{i,j}$ is separable over $K_i$ if and only if each $L_{i,j}$ is separable over $K_i$.
Thus $L_i$ is formally smooth over $K_i$ if and only if it is separable over $K_i$.

If $L_i$ is separable over $K_i$, then obviously $R_i$ is also separable over $K_i$ and the converse holds too (cf. \cite[Chapitre V, \S 15.2, Proposition 4]{Bourbaki:1981:Algebre:Ch4-7}).

By proposition~\ref{prop:quotients_of_PV_extensions}, $(L_i | K_i, R_i, H)$ is a Picard-Vessiot extension of $D(G_{P_i})$-module algebras for all $i \in \{ 1, \dots, m \}$.
Therefore we have an isomorphism
\[
	R_{i} \otimes_{K_{i}} R_{i} \cong R_{i} \otimes_{k} H.
\]
Let $\mathfrak{m}_i$ be a maximal ideal of $R_{i}$.
Then $R_{i} / \mathfrak{m}_i$ can be embedded into an algebraic closure $\bar{K_{i}}$ of $K_i$ by Hilbert's Nullstellensatz.
We extend the induced isomorphism
\[
	R_{i}/\mathfrak{m}_i \otimes_{K_{i}} R_{i} \cong R_{i}/\mathfrak{m}_i \otimes_{k} H.
\]
to an isomorphism
\[
	\overline{K_{i}} \otimes_{K_{i}} R_{i} \cong \overline{K_{i}} \otimes_{k} H.
\]
Since $R_{i}$ is separable over $K_{i}$ if and only if $\overline{K_{i}} \otimes_{K_{i}} R_{i}$ is reduced, and since $\overline{K_{i}} \otimes_{k} H$ is reduced if and only if $\overline{k} \otimes_{k} H$ is reduced, the equivalence of \ref{prop:smoothness_for_PV:R1_separable_over_K1} and \ref{prop:smoothness_for_PV:H_otimes_k_bar_reduced}
 follows.

We now prove that \ref{prop:smoothness_for_PV:H_smooth_over_k}
$\Leftrightarrow$ \ref{prop:smoothness_for_PV:H_otimes_k_bar_reduced}
$\Leftrightarrow$ \ref{prop:smoothness_for_PV:H_reduced}
$\Leftrightarrow$ \ref{prop:smoothness_for_PV:R_otimesK_R_reduced} if $k$ is perfect:

Since $k$ is perfect, the affine group scheme $\operatorname{Gal}(L|K) = \operatorname{Spec}H$ is smooth over $k$ if and only if
$\bar{k} \otimes_{k} H$ is reduced (cf. \cite[11.6]{Waterhouse:1979})
if and only if $H$ is reduced (cf. \cite[Chapitre V, \S 15.2, Proposition 5]{Bourbaki:1981:Algebre:Ch4-7}).
Since $R$ is reduced, $H$ is reduced if and only if $R \otimes_k H \cong R \otimes_K R$ is reduced.
\end{proof}

\section{Comparison with Picard-Vessiot theory}

In this section we examine the Galois hull $\mathcal{L} | \mathcal{K}$ defined above in the case where $(L | K,R,H)$ is a finitely generated Picard-Vessiot extension of Artinian simple commutative $D$-module algebras such that $R$ is smooth over $K$ and compare the Umemura functor{} $\operatorname{Ume}(L | K)$ with the Galois group scheme $\operatorname{Gal}(L|K)$ of $L | K$ as defined by Amano and Masuoka.

\begin{notation}
	Let $C$ be a field, $G$ be a monoid and $D^1$ be a pointed irreducible cocommutative Hopf algebra of Birkhoff-Witt type over $C$ such that $D^1$ is a $C G$-module algebra.
	We define $D \coloneqq D^1 \# C G$.
	Let $(L | K, R, H)$ be a Picard-Vessiot extension of Artinian simple commutative $D$-module algebras such that $R | K$ is smooth.
	We further assume that there exists a matrix $X \in \GL_n(R)$ such that $R=K[X,X^{-1}]$ and $d(X)X^{-1} \in M_n(K)$ for all $d \in D$ (if $D$ is a Hopf algebra, then this is the case by \cite[Theorem 4.6]{AmanoMasuoka:2005}).
	Let $\Omega(K) = \{ P_1, \dots, P_m \}$ be the set of minimal prime ideals of $K$ and for each $i \in \{1, \dots, m\}$ let
	$Q_{i,1}, \dots, Q_{i,r_i}$ be the minimal prime ideals of $L$ lying over $P_i$.
	We define $K_i \coloneqq K/P_i$, $L_{i,j} \coloneqq L/{Q_{i,j}}$, $L_i \coloneqq \prod_{j=1}^{r_i} L/Q_{i,j}$ and $R_i \coloneqq \prod_{j=1}^{r_i} R/(Q_{i,j} \cap R)$.
	By proposition~\ref{prop:smoothness_of_Picard-Vessiot_extensions}, $L_i$ is separable over $K_i$ for all $i \in \{1, \dots, m \}$ and thus all $L_{i,j}$ $(j \in \{1, \dots, r_i\})$ are separable over $K_i$.
	We assume that the transcendence degree of $L_{i,j}$ over $K_i$ is the same for all $i \in \{1, \dots, m\}$ and all $j \in \{ 1, \dots, r_i \}$.
	Let ${\bm{u}}^{(i,j)} = (u^{(i,j)}_1, \dots, u^{(i,j)}_n)$ be a separating transcendence basis for the extension $L_{i,j}$ over $K_i$ and let $\theta_{{\bm{u}}}$ be the $n$-variate iterative derivation \eqref{eq:iterative_derivation_wrt_u} on $L$ over $K$.
	We denote by $\Psi$ the $D$-module algebra structure on $L$, by $ \rho_{} $ the associated homomorphism of $C$-algebras and by $k \coloneqq L^{ \rho_{} }$ the field of constants of $L$.
\end{notation}

\begin{lem}
	The subring of ${}_{C}\mkern-1mu\mathcal{M}(D,L)$ generated by $ \rho_{0} (L)$ and $ \rho_{} (L)$ is closed under the $n$-variate iterative derivation $\theta_{{\bm{u}}}$ and $ \rho_{0} (L)$ and $ \rho_{} (L)$ are linearly disjoint over the field of constants $k$;
	i.e. with the notation of subsection~\ref{ssec:extension_Lcal_Kcal} there is an isomorphism
	\begin{equation}
		\mathcal{L} =  \rho_{0} (L)[ \rho_{} (L)] \cong  \rho_{0} (L) \otimes_{k}  \rho_{} (L)
		 \label{eqn:lem:Lcal_is_generated_by_Kcal_and_gth_of_L_if_extension_is_Picard-Vessiot_isomorphism_for_Lcal}
	\end{equation}
	of $D$-module algebras.
	Similarly, $ \rho_{0} (L)[ \rho_{} (R)]$ is closed under $\theta_{{\bm{u}}}$ and $ \rho_{0} (L)$ and $ \rho_{} (R)$ are linearly disjoint over $k$, i.e. there is an isomorphism of $D$-module algebras
	\begin{equation}
		  \rho_{0} (L)[ \rho_{} (R)] \cong  \rho_{0} (L) \otimes_k  \rho_{} (R).
		 \label{eqn:lem:Lcal_is_generated_by_Kcal_and_gth_of_L_if_extension_is_Picard-Vessiot_isomorphism_for_PV_ring}
	\end{equation}
	\label{lem:Lcal_is_generated_by_Kcal_and_gth_of_L_if_extension_is_Picard-Vessiot}
\end{lem}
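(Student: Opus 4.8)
The plan is to prove the two assertions in tandem, establishing first the linear disjointness (which yields the tensor decompositions \eqref{eqn:lem:Lcal_is_generated_by_Kcal_and_gth_of_L_if_extension_is_Picard-Vessiot_isomorphism_for_Lcal} and \eqref{eqn:lem:Lcal_is_generated_by_Kcal_and_gth_of_L_if_extension_is_Picard-Vessiot_isomorphism_for_PV_ring}) and then the stability under $\theta_{{\bm{u}}}$ (which identifies the iterative differential subalgebra $\mathcal{L}$ with the plain subalgebra $ \rho_{0} (L)[ \rho_{} (L)]$). For the disjointness I would apply lemma~\ref{lem:constants_of_simple_module_algebras_are_fields_and_linear_disjointness} with $A = L$, so that $A^{\Psi} = k$, and with the $C$-module $V \coloneqq {}_{C}\mkern-1mu\mathcal{M}(D,L)$. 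The key is to regard $V$ as an $L \# D$-module by letting $D$ act through ${\Psi_{\mkern-0.7mu int}}$ and letting $L$ act through $ \rho_{} $, i.e. $a \cdot f \coloneqq  \rho_{} (a) f$; the smash-product compatibility $d.(a \cdot f) = \sum_{(d)} (d_{(1)}.a)(d_{(2)} \cdot f)$ holds precisely because $ \rho_{} $ is a homomorphism of $D$-module algebras into $({}_{C}\mkern-1mu\mathcal{M}(D,L), {\Psi_{\mkern-0.7mu int}})$ by lemma~\ref{lem_generalized_universal_Taylor_morphism_is_a_D-module_homomorphism}. Since the $D$-constants of $V$ are exactly $ \rho_{0} (L)$ by lemma~\ref{lem_internal_D-module_algebra_structure_on_ModCDA} \ref{itm_lem_internal_D-module_algebra_structure_on_ModCDA_const}, lemma~\ref{lem:constants_of_simple_module_algebras_are_fields_and_linear_disjointness} provides an injection $L \otimes_{k}  \rho_{0} (L) \to {}_{C}\mkern-1mu\mathcal{M}(D,L)$, $a \otimes  \rho_{0} (b) \mapsto  \rho_{} (a)  \rho_{0} (b)$. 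As $\operatorname{ev}_{1_D} \circ  \rho_{}  = \id_L$ (lemma~\ref{lem:equivalent_defining_conditions_for_module_algebras}), the map $ \rho_{} $ is injective and identifies $L$ with $ \rho_{} (L)$, giving \eqref{eqn:lem:Lcal_is_generated_by_Kcal_and_gth_of_L_if_extension_is_Picard-Vessiot_isomorphism_for_Lcal}; restricting along the $k$-subalgebra $ \rho_{} (R) \subseteq  \rho_{} (L)$ and using that $ \rho_{0} (L)$ is $k$-flat yields \eqref{eqn:lem:Lcal_is_generated_by_Kcal_and_gth_of_L_if_extension_is_Picard-Vessiot_isomorphism_for_PV_ring}.

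The decisive computation for the $\theta_{{\bm{u}}}$-stability rests on the hypothesis $d(X) X^{-1} \in M_n(K)$ together with the fact that $\theta_{{\bm{u}}}$ is an iterative derivation \emph{over} $K$, so that $K$ lies in its constants. Because $\theta_{{\bm{u}}}$ was extended to ${}_{C}\mkern-1mu\mathcal{M}(D,L)$ pointwise (lemma~\ref{lem_internal_D-module_algebra_structure_on_ModCDA} \ref{itm_lem_internal_D-module_algebra_structure_on_ModCDA_another_MAS}), one has $\theta_{{\bm{u}}}^{({\bm{k}})}( \rho_{} (a))(d) = \theta_{{\bm{u}}}^{({\bm{k}})}(d.a)$ for every $d \in D$. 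Writing $A_d \coloneqq d(X) X^{-1} \in M_n(K)$, so that $d.X_{pq} = \sum_l (A_d)_{pl} X_{lq}$, and using that $\theta_{{\bm{u}}}$ annihilates the entries of $A_d$, I would compute
\[ \theta_{{\bm{u}}}^{({\bm{k}})}(d.X_{pq}) = \sum_{l} (A_d)_{pl}\, \theta_{{\bm{u}}}^{({\bm{k}})}(X_{lq}) = \sum_{m} (d.X_{pm}) \bigl( X^{-1} \theta_{{\bm{u}}}^{({\bm{k}})}(X) \bigr)_{mq}, \]
where in the last step I substitute $A_d = (d.X) X^{-1}$. Reading this as a function of $d$ and using the identity $( \rho_{} (a)  \rho_{0} (b))(d) = (d.a) b$, it becomes $\theta_{{\bm{u}}}^{({\bm{k}})}( \rho_{} (X_{pq})) = \sum_{m}  \rho_{} (X_{pm})  \rho_{0} (b_m^{({\bm{k}})})$ with $b_m^{({\bm{k}})} \coloneqq ( X^{-1} \theta_{{\bm{u}}}^{({\bm{k}})}(X) )_{mq} \in L$. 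Thus each $\theta_{{\bm{u}}}^{({\bm{k}})}$ carries the entries of $ \rho_{} (X)$ into $ \rho_{0} (L)[ \rho_{} (R)]$.

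To assemble the stability statement I would then note that the entries of $ \rho_{} (K)$ are $\theta_{{\bm{u}}}$-constant (again since $K$ is $D$-stable and constant for $\theta_{{\bm{u}}}$) and that $\theta_{{\bm{u}}}^{({\bm{k}})}( \rho_{0} (a)) =  \rho_{0} (\theta_{{\bm{u}}}^{({\bm{k}})}(a)) \in  \rho_{0} (L)$. Since the homomorphism associated to $\theta_{{\bm{u}}}$ is a ring homomorphism, a subalgebra is $\theta_{{\bm{u}}}$-stable as soon as its generators are carried into it; as $ \rho_{0} (L)[ \rho_{} (R)]$ is generated by $ \rho_{0} (L)$, $ \rho_{} (K)$ and the entries of $ \rho_{} (X)$ and $ \rho_{} (X)^{-1} =  \rho_{} (X^{-1})$, it is $\theta_{{\bm{u}}}$-stable. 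Finally, because $L = \TotalQuot(R)$, the algebra $ \rho_{0} (L)[ \rho_{} (L)]$ is the localization of $ \rho_{0} (L)[ \rho_{} (R)]$ at the image under $ \rho_{} $ of the non-zero-divisors of $R$ (each such $ \rho_{} (s)$ being a unit, since $\operatorname{ev}_{1_D} \circ  \rho_{}  = \id$), and $\theta_{{\bm{u}}}$ passes to this localization by the quotient rule; hence $ \rho_{0} (L)[ \rho_{} (L)]$ is $\theta_{{\bm{u}}}$-stable and therefore equals $\mathcal{L} =  \rho_{0} (L)\{  \rho_{} (L) \}_{\theta_{{\bm{u}}}}$.

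I expect the matrix computation in the second paragraph to be the main obstacle: it is the only point where the smoothness hypothesis enters substantively, through the existence of a coordinate matrix $X$ with $d(X) X^{-1} \in M_n(K)$, and where the interplay between the $D$-action and the auxiliary iterative derivation $\theta_{{\bm{u}}}$ must be untangled. By contrast, the linear disjointness is a direct application of lemma~\ref{lem:constants_of_simple_module_algebras_are_fields_and_linear_disjointness} once the correct $L \# D$-module structure on ${}_{C}\mkern-1mu\mathcal{M}(D,L)$ is identified, and the passage from $R$ to $L = \TotalQuot(R)$ is formal.
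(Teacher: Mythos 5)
Your overall route coincides with the paper's. The linear disjointness is obtained exactly as you describe: lemma~\ref{lem:constants_of_simple_module_algebras_are_fields_and_linear_disjointness} applied to ${}_{C}\mkern-1mu\mathcal{M}(D,L)$ viewed as an $L \# D$-module ($L$ acting through $ \rho_{} $, $D$ through ${\Psi_{\mkern-0.7mu int}}$), whose $D$-constants are $ \rho_{0} (L)$ by lemma~\ref{lem_internal_D-module_algebra_structure_on_ModCDA}. For the $\theta_{{\bm{u}}}$-stability, your entrywise identity $\theta_{{\bm{u}}}^{({\bm{k}})}( \rho_{} (X)) =  \rho_{} (X)\, \rho_{0} (X^{-1}\theta_{{\bm{u}}}^{({\bm{k}})}(X))$ is precisely the paper's observation that $Z \coloneqq  \rho_{} (X) \rho_{0} (X)^{-1}$ has entries in ${}_{C}\mkern-1mu\mathcal{M}(D,K)$ (hence is $\theta_{{\bm{u}}}$-constant), unpacked in coordinates; and the passage from $R$ to $L = \TotalQuot(R)$ via invertibility of $\theta_{{\bm{u}}}( \rho_{} (a))$ for non-zero divisors $a$ is the paper's appeal to Sweedler's Lemma 9.2.3.

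One step in your stability argument is asserted but not verified. You list the entries of $ \rho_{} (X)^{-1}$ among the generators of $ \rho_{0} (L)[ \rho_{} (R)]$ and then invoke the criterion that generators must be carried into the subalgebra, but your displayed computation only treats $ \rho_{} (X)$; the analogous input $d(X^{-1})\,X \in M_n(K)$ is not among the hypotheses and does not follow formally from $d(X)X^{-1} \in M_n(K)$ for a general bialgebra element $d$. The gap closes with the same invertibility device you use for the localization: since the homomorphism associated to $\theta_{{\bm{u}}}$ is a ring homomorphism, $\theta_{{\bm{u}}}( \rho_{} (X)^{-1}) = \theta_{{\bm{u}}}( \rho_{} (X))^{-1} =  \rho_{0} (\theta_{{\bm{u}}}(X))^{-1} \rho_{0} (X)\, \rho_{} (X)^{-1}$, and $\theta_{{\bm{u}}}(X)$ is invertible over $L\llbracket {\bm{w}} \rrbracket$ because its constant term $X$ is invertible in $L$; hence these entries land in $ \rho_{0} (L)[ \rho_{} (R)]\llbracket {\bm{w}} \rrbracket$ as required. (In the paper's formulation this is the statement that $Z^{-1}$ is again $\theta_{{\bm{u}}}$-constant.) With that sentence added, your proof is complete and matches the published one.
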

\begin{proof}
	The element $Z \coloneqq  \rho_{} (X)  \rho_{0} (X)^{-1}$ lies in $\GL_n({}_{C}\mkern-1mu\mathcal{M}(D,K))$ and so
	\[  \rho_{0} (L)[ \rho_{} (R)]
		= \mathcal{K}[ \rho_{} (R)]
		= \mathcal{K}[ \rho_{} (X),  \rho_{} (X)^{-1}]
		= \mathcal{K}[ Z, Z^{-1} ]
	\]
	is closed under $\theta_{{\bm{u}}}$.
	Since $ \rho_{} (\mkern-0.5mu L \mkern-0.5mu) \mkern-2mu = \mkern-2mu \TotalQuot(  \rho_{} \mkern-1mu(\mkern-0.5mu R\mkern-0.5mu)\mkern-0.5mu )$, for every non-zero divisor $a \in R$ the element $\theta_{{\bm{u}}}( \rho_{} (a))$ is invertible in $ \rho_{0} (L)[ \rho_{} (L)]\llbracket {\bm{w}} \rrbracket$ by \cite[Lemma 9.2.3]{Sweedler:1969}.
	Thus, $ \rho_{0} (L)[ \rho_{} (L)]$ is also closed under $\theta_{{\bm{u}}}$.

	It follows from lemma~\ref{lem:constants_of_simple_module_algebras_are_fields_and_linear_disjointness} that $ \rho_{} (L)$ and $ \rho_{0} (L)$ are linearly disjoint over $k$ and thus that $\mathcal{L}$ and $ \rho_{0} (L) \otimes_k  \rho_{} (L)$ are isomorphic as $D$-module algebras.
	Since $ \rho_{} (R)$ is a subalgebra of $ \rho_{} (L)$, the algebras $ \rho_{} (R)$ and $ \rho_{0} (L)$ are also linearly disjoint over $k$ and we obtain the isomorphism~\eqref{eqn:lem:Lcal_is_generated_by_Kcal_and_gth_of_L_if_extension_is_Picard-Vessiot_isomorphism_for_PV_ring}.
\end{proof}

\begin{lem}
	We assume that the field of constants $k$ is perfect.
	Then there exists a finite étale extension $K'$ of $K$, a matrix $B \in \GL_{n}(K')$ and a right $R \otimes_K K'$-linear automorphism $\gamma$
	of the $D$-module algebra $(R \otimes_k R \otimes_K K',  \rho_{}  \otimes  \rho_{0}  \otimes  \rho_{0} )$, defined by
	\begin{equation}
		\gamma(X \otimes 1 \otimes 1) \coloneqq (X \otimes 1 \otimes 1) (1 \otimes X^{-1} \otimes 1) (1 \otimes 1 \otimes B).
	\label{eqn_lem_helping_isomorphism_for_decomposition_in_PV_comparision_def_gamma}
	\end{equation}
	\label{lem_helping_isomorphism_for_decomposition_in_PV_comparision}
\end{lem}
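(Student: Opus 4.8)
The plan is to identify $\gamma$ with a right translation of the torsor $\operatorname{Spec} R$ by a suitable group-valued point, so that the well-definedness of $\gamma$ becomes automatic from the torsor structure rather than being a direct verification on the relations defining $R$. First I would record the comodule structure coming from the fundamental isomorphism. By proposition~\ref{prop:fundamental_isomorphism_of_Picard-Vessiot_theory} the map $\mu \colon (R \otimes_k H, \rho_R \otimes \rho_0) \to (R \otimes_K R, \rho_R \otimes \rho_R)$ is an isomorphism of $D$-module algebras. As with the element $Z$ in the proof of lemma~\ref{lem:Lcal_is_generated_by_Kcal_and_gth_of_L_if_extension_is_Picard-Vessiot}, the hypothesis $d(X) X^{-1} \in M_n(K)$ forces the matrix $W \coloneqq (X^{-1} \otimes 1)(1 \otimes X)$ to be constant, so $W \in \GL_n(H)$ and $\mu^{-1}(1 \otimes X) = X \otimes W$. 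I then use the right $H$-comodule algebra structure $\rho_H \coloneqq \mu^{-1} \circ (a \mapsto 1 \otimes a) \colon R \to R \otimes_k H$, which satisfies $\rho_H(X) = X \otimes W$. Since $\iota_2 \colon a \mapsto 1 \otimes a$ is a homomorphism of $D$-module algebras and is $K$-linear (because $1 \otimes c = c \otimes 1$ in $R \otimes_K R$ for $c \in K$), and $\mu^{-1}$ is left $R$-linear and $D$-equivariant, the map $\rho_H$ is a homomorphism of $K$-algebras and of $D$-module algebras, with $H$ carrying the trivial action $\rho_0$.

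Next I would produce $K'$ and $B$ from smoothness. Since $R \mid K$ is smooth and $k$ is perfect, proposition~\ref{prop:smoothness_of_Picard-Vessiot_extensions} shows that $H$, and hence $\operatorname{Gal}(L|K) = \operatorname{Spec} H$, is smooth; moreover $\operatorname{Spec} R \to \operatorname{Spec} K$ is faithfully flat, being a torsor by proposition~\ref{prop:fundamental_isomorphism_of_Picard-Vessiot_theory}, and smooth. Working over each factor field $K_i$ of $K$, the nonempty smooth $K_i$-scheme $\operatorname{Spec} R_i$ has a point in a finite separable extension $K_i'$ of $K_i$, since a smooth morphism admits sections after a finite étale base change (cf. \cite[Chapitre IV, 17.16.3]{EGA4-4_IHES:1967}). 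Taking $K' \coloneqq \prod_i K_i'$ yields a finite étale $K$-algebra together with a homomorphism of $K$-algebras $\epsilon' \colon R \to K'$, and I set $B \coloneqq \epsilon'(X) \in \GL_n(K')$.

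Finally I would build $\gamma$. Set $R' \coloneqq R \otimes_K K'$, so that $R \otimes_k R \otimes_K K' = R \otimes_k R'$ and $R'$, sitting as $1 \otimes R \otimes_K K'$, consists entirely of constants for $\rho \otimes \rho_0 \otimes \rho_0$. The generic point $X \otimes 1$ and the special point $1 \otimes B$ are two $R'$-points of the torsor $\operatorname{Spec} R$, hence differ by a unique group element; that is, there is a homomorphism of $k$-algebras $g \colon H \to R'$ with $g(W) = (X \otimes 1)^{-1}(1 \otimes B) = (X^{-1} \otimes 1)(1 \otimes B)$. I define $\gamma$ as the composite of $\rho_H \otimes \operatorname{id}_{R'} \colon R \otimes_k R' \to R \otimes_k H \otimes_k R'$ with the homomorphism $R \otimes_k H \otimes_k R' \to R \otimes_k R'$, $a \otimes h \otimes s \mapsto a \otimes g(h)\,s$. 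Being a composite of algebra homomorphisms, $\gamma$ is a well-defined algebra endomorphism; it is right $R'$-linear, fixes the left copy of $K$, and evaluating on $X \otimes 1 \otimes 1$ gives $\gamma(X \otimes 1 \otimes 1) = (X \otimes 1 \otimes 1)(1 \otimes X^{-1} \otimes 1)(1 \otimes 1 \otimes B)$, as required. It is a homomorphism of $D$-module algebras because $\rho_H$ is one and $g$ takes values in the constant subalgebra $R'$, and it is an automorphism because the same construction with $g$ replaced by $g \circ S$, where $S$ is the antipode of $H$, furnishes a two-sided inverse.

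The main obstacle is precisely this well-definedness: a priori the matrix on the right-hand side of \eqref{eqn_lem_helping_isomorphism_for_decomposition_in_PV_comparision_def_gamma} need not satisfy the relations cutting out $R = K[X, X^{-1}]$. The device that removes the difficulty is the torsor structure supplied by proposition~\ref{prop:fundamental_isomorphism_of_Picard-Vessiot_theory}: realizing $\gamma$ through the coaction $\rho_H$ and the group element $g$ forces its value on $X \otimes 1 \otimes 1$ to lie on the torsor automatically, so no relation needs to be checked by hand. The only genuinely geometric input is the existence of the finite étale section $\epsilon'$, which is where smoothness of $R \mid K$ and perfectness of $k$ (through the smoothness of $\operatorname{Gal}(L|K)$) enter.
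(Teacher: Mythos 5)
Your proposal is correct and follows essentially the same route as the paper: your $\rho_H$ is the paper's $\eta = \mu^{-1}\circ(a\mapsto 1\otimes a)$, your $\epsilon'$ is the paper's point $\nu\colon R\to K'$, your $(\mathrm{id}\otimes g)\circ\rho_H$ followed by multiplication is the paper's right $R\otimes_K K'$-linear extension of $(\mathrm{id}_R\otimes\mathrm{id}_R\otimes\nu)\circ\eta$, and both proofs invert $\gamma$ via the antipode. The only (harmless) divergence is in producing the finite \'etale point: you appeal to smoothness of $R_i$ over $K_i$ directly, whereas the paper first gets a point in $\overline{K_i}$ by the Nullstellensatz and then uses that $\operatorname{Spec}R_i$ is a principal homogeneous space under the smooth group scheme $\operatorname{Gal}(L_i|K_i)$ to descend to a finite separable extension.
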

\begin{proof}
	Let $\eta \colon (R,  \rho_{R} ) \to (R \otimes_k H,  \rho_{R}  \otimes  \rho_{0} )$ be the homomorphism of $D$-module algebras defined by $\eta(a) \coloneqq \mu^{-1}(1 \otimes a)$ for all $a \in R$, where $\mu \colon R \otimes_{k} H \to R \otimes_{K} R$ is the isomorphism of $D$-module algebras \eqref{eqn:prop:fundamental_isomorphism_of_Picard-Vessiot_theory}.
	Then $\eta$ fulfills
	\begin{equation}
		\eta(X) = (X \otimes (1 \otimes 1)) (1 \otimes (X^{-1} \otimes 1)(1 \otimes X)).
		\label{eqn_lem_helping_isomorphism_for_decomposition_in_PV_comparision_theta}
	\end{equation}
	By proposition~\ref{prop:smoothness_of_Picard-Vessiot_extensions}, $\operatorname{Gal}(L_i|K_i)$ is smooth over $k$ and by proposition~\ref{prop:quotients_of_PV_extensions}, $L_i | K_i$ is a Picard-Vessiot extension for all $i \in \{ 1, \dots, m \}$.
	Since the $K_{i}$-algebra $R_{i}$ is finitely generated, $\operatorname{Spec}R_{i}$ has a point in the algebraic closure of $K_{i}$ by Hilbert's Nullstellensatz.
	Since $\operatorname{Spec}R_{i}$ is a principal homogeneous space for the smooth affine group scheme $\operatorname{Gal}(L_{i}|K_{i})$, it has in fact a point in a finite separable field extension of $K_{i}$ (cf. \cite[18.5]{Waterhouse:1979}).
	Together they provide a point $\nu \colon R \to K'$ of $\operatorname{Spec}R = \operatorname{Spec}\prod_{i=1}^m R_i$ in a finite étale extension $K'$ of $K$.
	We extend the composition
	\begin{align*}
		\xymatrixcolsep{2.5em}
		\xymatrix{
			R \ar[r]^-{\eta}
				& R \mkern-2mu \otimes_k \mkern-2mu H \ar@{^{(}->}[r]
				& R \mkern-2mu \otimes_k \mkern-2mu R \mkern-1mu \otimes_K \mkern-2mu R \ar[rr]^{\id_{\mkern-2mu R} \mkern-2mu \otimes_k \mkern-2mu \id_{\mkern-2mu R} \mkern-2mu \otimes_K \mkern-2mu \nu }
				& & R \mkern-2mu \otimes_k \mkern-2mu R \mkern-2mu \otimes_K \mkern-2mu K'
		}
	\end{align*}
	right $R \otimes_K K'$-linearly to an endomorphism $\gamma$ of $R \otimes_k R \otimes_K K'$
	and we define $B \coloneqq \nu(X)$.
	Then the defining identity \eqref{eqn_lem_helping_isomorphism_for_decomposition_in_PV_comparision_def_gamma} for $\gamma$ follows from equation \eqref{eqn_lem_helping_isomorphism_for_decomposition_in_PV_comparision_theta} and clearly $\gamma$ is a homomorphism of $D$-module algebras.
The inverse of $\gamma$ is given by the right $R \otimes_K K'$-linear extension of
	\begin{align*}
		\xymatrixcolsep{2.55em}
		\xymatrix{
			R \ar[r]^-{\eta}
				& R \mkern-2mu \otimes_k \mkern-3mu H \ar[r]^-{\id_R \otimes_k S}
				& R \mkern-2mu \otimes_k \mkern-3mu H \ar@{^{(}->}[r]
				& R \mkern-2mu \otimes_k \mkern-3mu R \mkern-1mu \otimes_K \mkern-3mu R \ar[rr]^{\id_R \mkern-2mu \otimes_k \mkern-2mu \id_R \mkern-2mu \otimes_K \mkern-2mu \nu }
				& & R \mkern-2mu \otimes_k \mkern-3mu R \mkern-2mu \otimes_K \mkern-3mu K'
		}
	\end{align*}
	to an endomorphism of $R \otimes_k R \otimes_K K'$, where $S$ is the antipode of $H$.

\end{proof}

\begin{thm}
	\label{thm_comparison_between_InfGal_and_Gal}
	If the field of constants $k$ is perfect,
	then there exists a finite étale extension $L'$ of $L$ such that
	$\operatorname{Ume}(L | K) \times_L L'$ is isomorphic to the formal group scheme $\widehat{\operatorname{Gal}(L|K)}_{L'}$ associated to the base extension $\operatorname{Gal}(L|K)_{L'} = \operatorname{Gal}(L|K) \times_k L'$ of the Galois group scheme $\operatorname{Gal}(L|K)$.
\end{thm}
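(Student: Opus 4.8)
The plan is to produce, for every commutative $L'$-algebra $A$, a natural isomorphism between $\operatorname{Ume}(L | K)(A)$ and the group of infinitesimal $A$-points of $\operatorname{Gal}(L|K)$, and then to check that it respects the group laws. Since $R | K$ is smooth, proposition~\ref{prop:smoothness_of_Picard-Vessiot_extensions} shows that $H$ is smooth over $k$, so $\operatorname{Gal}(L|K) = \operatorname{Spec}H$ is a smooth affine group scheme and its completion along the identity is a formal group scheme; by proposition~\ref{rmk_Galois_group_scheme_of_Picard-Vessiot_extensions} the $A$-points of $\widehat{\operatorname{Gal}(L|K)}_{L'}$ are the automorphisms of the $D$-module algebra $(R \otimes_k A, \rho \otimes \rho_0)$ fixing $K \otimes_k A$ that reduce to the identity modulo $N(A)$. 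First I would fix $L'$ to be the finite étale extension of $L$ induced by the finite étale extension $K'$ of $K$ furnished by lemma~\ref{lem_helping_isomorphism_for_decomposition_in_PV_comparision}, so that over $L'$ the trivialising automorphism $\gamma$, the point $\nu$ and the matrix $B = \nu(X)$ are all at our disposal.

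Next I would construct the comparison map. By lemma~\ref{lem:Lcal_is_generated_by_Kcal_and_gth_of_L_if_extension_is_Picard-Vessiot} we have $\mathcal{L} = \mathcal{K}[\rho(X), \rho(X)^{-1}]$, so any $\varphi \in \operatorname{Ume}(L | K)(A)$, which fixes $\mathcal{K}$, is determined by its value on $\rho(X) \otimes 1$; writing $\varphi(\rho(X) \otimes 1) = (\rho(X) \otimes 1)\, M_\varphi$ defines a unique matrix $M_\varphi \in \GL_n(\mathcal{L} \hat{\otimes}_L A\llbracket {\bm{w}} \rrbracket)$. The structural constraints on $\varphi$, namely that it is a homomorphism of $D \otimes_C D_{I \mkern-1.9mu D^{n}}$-module algebras fixing $\mathcal{K}$, force $M_\varphi$ to be constant both for the internal $D$-action ${\Psi_{\mkern-0.7mu int}}$ and for the iterative derivation $\theta_{{\bm{u}}}$; the key input is that, by the same lemma, the $D$- and $\theta_{{\bm{u}}}$-derivatives of the generator $\rho(X)$ are governed by the datum $Z = \rho(X)\rho_0(X)^{-1} \in \GL_n({}_{C}\mkern-1mu\mathcal{M}(D,K))$. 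Applying $\operatorname{ev}_{1_D}$ and reducing modulo $({\bm{w}})$ then sends $M_\varphi$ to a matrix $\overline{M}_\varphi \in \GL_n(A)$ congruent to the identity modulo $N(A)$, by the reduction diagram defining $\operatorname{Ume}(L | K)(A)$. Transporting $\overline{M}_\varphi$ through the isomorphism $R \otimes_K R \cong R \otimes_k H$ of proposition~\ref{prop:fundamental_isomorphism_of_Picard-Vessiot_theory} and the trivialisation $\gamma$, I would identify it with an infinitesimal automorphism of $(R \otimes_k A, \rho \otimes \rho_0)$ fixing $K \otimes_k A$, that is, with an $A$-point of $\widehat{\operatorname{Gal}(L|K)}_{L'}$.

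I would then verify multiplicativity and exhibit an inverse. Conjugating $\rho(X)$ and invoking the composition rule $\Phi \mapsto \Phi(\Psi)$ established in the proof of theorem~\ref{thm:InfGal_is_a_Lie-Ritt_functor} together with the explicit expansion \eqref{eq:explicit_description_of_infinitesimal_automorphisms_using_infinitesimal_transformations} shows that $\varphi \mapsto \overline{M}_\varphi$ carries composition of automorphisms to matrix multiplication. For the inverse, given an infinitesimal $M \in \operatorname{Gal}(L|K)(A)$ I would define $\varphi_M$ on the generator by $\varphi_M(\rho(X) \otimes 1) \coloneqq (\rho(X) \otimes 1)\, \widetilde{M}$, where $\widetilde{M}$ is the image of $M$ under the structure map $A \to \mathcal{L} \hat{\otimes}_L A\llbracket {\bm{w}} \rrbracket$, extend $\varphi_M$ both $\mathcal{K}$-linearly and compatibly with the $D \otimes_C D_{I \mkern-1.9mu D^{n}}$-module algebra structure, and check that it is a well-defined element of $\operatorname{Ume}(L | K)(A)$. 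The two assignments are then mutually inverse and natural in $A$, yielding the desired isomorphism $\operatorname{Ume}(L | K) \times_L L' \cong \widehat{\operatorname{Gal}(L|K)}_{L'}$.

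The main obstacle is exactly the passage, used twice above, between the $D$-module automorphism $\varphi$ of $\mathcal{L} \hat{\otimes}_L A\llbracket {\bm{w}} \rrbracket$ and a genuine point of the torsor $\operatorname{Spec}R$: a priori $M_\varphi$ lies only in $\GL_n$, and showing that it respects the Hopf-algebraic relations cut out by $H$ — equivalently, that $\varphi_M$ preserves every relation satisfied by $\rho(X)$ over $\mathcal{K}$ — requires a rational point of $\operatorname{Spec}R$, which exists only after the finite étale base change of lemma~\ref{lem_helping_isomorphism_for_decomposition_in_PV_comparision}. This is precisely why the comparison is available only over $L'$ and why one recovers the base-changed formal completion rather than $\operatorname{Gal}(L|K)$ itself, in accordance with the loss of information anticipated in the introduction.
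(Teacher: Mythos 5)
There is a genuine gap at the central step of your argument: the claim that the $D \otimes_C D_{I \mkern-1.9mu D^{n}}$-equivariance of $\varphi$ forces the matrix $M_\varphi$ (defined by $\varphi(\rho(X)\otimes 1)=(\rho(X)\otimes 1)M_\varphi$) to be constant for \emph{both} structures is false as stated. Writing $\rho(X)=Z_{0}\,\rho_0(X)$ with $Z_{0}=\rho(X)\rho_0(X)^{-1}\in \GL_n({}_{C}\mkern-1mu\mathcal{M}(D,K))$, one finds $\theta_{{\bm{u}}}^{({\bm{k}})}(\rho(X))=\rho(X)\,C_{{\bm{k}}}$ with $C_{{\bm{k}}}=\rho_0\bigl(X^{-1}\theta_{{\bm{u}}}^{({\bm{k}})}(X)\bigr)$, and equating $\theta^{({\bm{k}})}\circ\varphi$ with $\varphi\circ\theta^{({\bm{k}})}$ on the generator yields, already for $|{\bm{k}}|=1$, the commutator equation $\theta^{({\bm{k}})}(M_\varphi)=M_\varphi C_{{\bm{k}}}-C_{{\bm{k}}}M_\varphi$, which does not vanish in general (matrices over a commutative ring need not commute). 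Placing $M_\varphi$ on the other side trades this problem for the analogous one with the $D$-action, since $d.\rho(X)$ is a \emph{left} multiple of $\rho(X)$ while $\theta_{{\bm{u}}}^{({\bm{k}})}(\rho(X))$ is a \emph{right} multiple. So the passage from $\varphi$ to a matrix $\overline{M}_\varphi\in\GL_n(A)$ is not available directly from $\rho(X)$, and the subsequent multiplicativity and inverse constructions inherit the same defect; your closing paragraph correctly senses that something must be repaired by the étale base change, but locates the difficulty in "checking the Hopf-algebraic relations" rather than where it actually lies.

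The paper's proof resolves exactly this point by an untwisting that you gesture at but do not perform: using $\bar{\gamma}$ and the matrix $B=\nu(X)$ from lemma~\ref{lem_helping_isomorphism_for_decomposition_in_PV_comparision}, one replaces the generator $\rho(X)$ by $Z=\rho(X)\rho_0(X)^{-1}\rho_0(B)$, whose entries are simultaneously $\theta_{{\bm{u}}}$-constants (because $Z_{0}$ takes values in $K$ and $\rho_0(X^{-1}B)$ is a ${\Psi_{\mkern-0.7mu int}}$-constant with $B$ over the étale extension $K'$, on which $\theta_{{\bm{u}}}$ is trivial) and still have $D$-derivatives that are left multiples by matrices over $\rho(K)\subseteq\mathcal{K}$. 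This produces the isomorphism of $D\otimes_C D_{I \mkern-1.9mu D^{n}}$-module algebras \eqref{eqn_thm_comparison_between_InfGal_and_Gal_decomposition2b}, $\rho_0(L)[\rho(R)]\hat{\otimes}_L A\llbracket{\bm{w}}\rrbracket\cong (R\otimes_k A)\llbracket{\bm{w}}\rrbracket$ with the $D_{I \mkern-1.9mu D^{n}}$-structure concentrated in $\theta_{{\bm{w}}}$; an equivariant automorphism of the right-hand side commuting with $\theta_{{\bm{w}}}$ is automatically of the form $\sigma\llbracket{\bm{w}}\rrbracket$ for an automorphism $\sigma$ of the ring of constants $R\otimes_k A$, which makes both the forward map and its inverse well defined without any separate verification of relations. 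To repair your argument you would need to carry out this change of generator before extracting a matrix; as written, the comparison map is not well defined.
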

\begin{proof}
	Let $A$ be a commutative $L$-algebra.
	By remark~\ref{rmk_Galois_group_scheme_of_Picard-Vessiot_extensions}, $\operatorname{Gal}(L|K)(A)$ is isomorphic to the group $\Aut_D(R \otimes_{k} A | K \otimes_{k} A)$ of automorphisms of the $D$-module algebra $R \otimes_k A$ that leave $K \otimes_k A$ fixed.
	Thus $\widehat{\operatorname{Gal}(L|K)}(A)$ is isomorphic to the kernel
	\begin{equation}
	\label{eqn_thm_comparison_between_InfGal_and_Gal_kernel_RK}
		\Kernel \big( \Aut_D(R \otimes_{k} A | K \otimes_{k} A) \to \Aut_D(R \otimes_{k} A/N(A) | K \otimes_{k} A/N(A)) \big).
	\end{equation}
	The isomorphisms \eqref{eqn:lem:Lcal_is_generated_by_Kcal_and_gth_of_L_if_extension_is_Picard-Vessiot_isomorphism_for_Lcal} and \eqref{eqn:lem:Lcal_is_generated_by_Kcal_and_gth_of_L_if_extension_is_Picard-Vessiot_isomorphism_for_PV_ring} induce isomorphisms of algebras
	\begin{equation}
		\mathcal{L} \hat{\otimes}_L A\llbracket {\bm{w}}\rrbracket \cong (L \otimes_k A)\llbracket {\bm{w}}\rrbracket
		\label{eqn_lem_comparison_InfGalLK_and_InfGalRK_isom_Lcal}
	\end{equation}
	and
	\begin{equation}
		 \rho_{0} (L)[ \rho_{} (R)] \hat{\otimes}_L A\llbracket {\bm{w}}\rrbracket \cong (R \otimes_k A)\llbracket {\bm{w}}\rrbracket.
		\label{eqn_lem_comparison_InfGalLK_and_InfGalRK_isom_Rcal}
	\end{equation}
	Using these isomorphisms it is easy to see that $\operatorname{Ume}(L | K)(A)$ is isomorphic to the group of automorphisms of the $D \otimes_{C} D_{I \mkern-1.9mu D^{n}}$-module algebra $ \rho_{0} (L)[ \rho_{} (R)] \hat{\otimes}_L A\llbracket {\bm{w}}\rrbracket$ that leave $\mathcal{K} \mkern-1mu \hat{\otimes}_{\mkern-1mu L} \mkern-1mu A\llbracket \mkern-1mu {\bm{w}} \mkern-1mu \rrbracket$ fixed and are congruent to the identity modulo $ \rho_{0} ( \mkern-1mu L \mkern-1mu)[ \rho_{} ( \mkern-1muR)] \hat{\otimes}_{\mkern-1mu L} N(\mkern-1mu A) \mkern-1mu \llbracket \mkern-1mu {\bm{w}} \mkern-1mu \rrbracket$.
	We denote the latter by $\operatorname{Ume}(R | K)(A)$.

	By lemma~\ref{lem_helping_isomorphism_for_decomposition_in_PV_comparision}, there exists a finite étale extension $K'$ of $K$, a matrix $B \in \GL_{n}(K')$ and a right $R \otimes_K K'$-linear automorphism $\gamma$ of the $D$-module algebra $(R \otimes_k R \otimes_K K',  \rho_{}  \otimes  \rho_{0}  \otimes  \rho_{0} )$ defined by
	\[ \gamma(X \otimes 1 \otimes 1) \coloneqq (X \otimes 1 \otimes 1)(1 \otimes X^{-1} \otimes 1)(1 \otimes 1 \otimes B). \]
	There exists a finite étale extension $L'$ of $L$ containing $K'$ and $\gamma$ induces a left $K$-linear and right $L'$-linear automorphism $\bar{\gamma}$ of the $D$-module algebra $(R \otimes_k L',  \rho_{}  \otimes  \rho_{0} )$ defined by $\bar{\gamma}(X \otimes 1) \coloneqq (X \otimes 1)(1 \otimes X^{-1} B)$.
	The $n$-variate iterative derivation $\theta_{{\bm{u}}}$ extends uniquely from $L$ to $L'$ (cf. \cite[Theorem 27.2]{Matsumura:1989} or \cite[Proposition 1.2.2]{Heiderich:2010}) and we denote it again by $\theta_{{\bm{u}}}$.
	The ring $R \otimes_k L'$ is generated by $\bar{\gamma}(R \otimes_k 1)$ and $1 \otimes_k L'$, which are linearly disjoint over $k$ by lemma~\ref{lem:constants_of_simple_module_algebras_are_fields_and_linear_disjointness} (note that by proposition~\ref{prop:properties_of_principal_module_algebra_of_a_Picard-Vessiot_extension_of_AS_module_algebras}, the $D$-module algebra $R$ is simple); we have an isomorphism of $D$-module algebras
	\begin{align}
	\label{eq:RkL_linear_disjointness}
	R \otimes_k L'
		\mkern-2mu = \mkern-2mu \bar{\gamma}(R \otimes_k 1)[1 \otimes_k L']
		\mkern-2mu \cong \mkern-2mu \bar{\gamma}(R \otimes_k 1) \otimes_k L',
	\end{align}
	where the $D$-module algebra structure on $\bar{\gamma}(R \otimes_k 1) \otimes_k L'$ is $ \rho_{}  \otimes_k  \rho_{0}  \otimes_k  \rho_{0} $.
	The isomorphism \eqref{eqn:lem:Lcal_is_generated_by_Kcal_and_gth_of_L_if_extension_is_Picard-Vessiot_isomorphism_for_PV_ring} of $D$-module algebras extends $L'$-linearly to an isomorphism
	\begin{align}
		\xymatrix{
			R \otimes_k L' \ar[r]^-{ \rho_{}  \otimes_k  \rho_{0} }
			&  \rho_{} (R) \otimes_k  \rho_{0} (L') \ar[r]^-{m}
			&  \rho_{0} (L')[ \rho_{} (R)],
		}
		\label{eqn_thm_comparison_between_InfGal_and_Gal_decomposition1b}
	\end{align}
	where $R \otimes_k L'$ carries the $D$-module algebra structure $ \rho_{}  \otimes_k  \rho_{0} $ and $m$ is the restriction of the multiplication homomorphism of $ \rho_{0} (L')[ \rho_{} (R)]$.
	The image of $\bar{\gamma}(R \otimes_k 1)$ under this isomorphism in $ \rho_{0} (L')[ \rho_{} (R)]$ is $ \rho_{} (K)[Z, Z^{-1}]$ with $Z \coloneqq  \rho_{} (X) \rho_{0} (X)^{-1}  \rho_{0} (B)$ and the image of $1 \otimes_k L'$ under it is $ \rho_{0} (L')$.
	Thus, we obtain an isomorphism of $D \otimes_{C} D_{I \mkern-1.9mu D^{n}}$-module algebras
	\begin{align}
		\xymatrixcolsep{0.7em}
		\xymatrixrowsep{0.7em}
		\xymatrix{
		R \mkern-3mu \otimes_k \mkern-4mu L\mkern-2mu' \mkern-2mu \ar[rr]
		&
		& \mkern-2mu \bar{\gamma} \mkern-1mu (\mkern-2mu R \mkern-3mu \otimes_k \mkern-4mu 1 \mkern-2mu) \mkern-4mu \otimes_k \mkern-5mu L\mkern-2mu' \mkern-2mu \ar[rrrr]^-{m \mkern-1mu \circ \mkern-1mu (\mkern-1mu  \rho_{}  \mkern-1mu \otimes \mkern-1mu  \rho_{0}  \mkern-2mu) \otimes  \rho_{0}  }
		&
		&
		&
		& \mkern-2mu  \rho_{} \mkern-1mu (\mkern-2mu K \mkern-2mu) \mkern-2mu [\mkern-2mu Z \mkern-2mu, \mkern-2mu Z^{-1} \mkern-2mu] \mkern-4mu \otimes_k \mkern-3mu  \rho_{0}  \mkern-1mu (\mkern-2mu L\mkern-2mu' \mkern-2mu) \mkern-2mu \ar[r]^-{m}
		& \mkern-1mu  \rho_{0}  \mkern-1mu(\mkern-2mu L\mkern-2mu' \mkern-2mu) \mkern-2mu [ \rho_{}  \mkern-1mu( \mkern-2mu R \mkern-2mu) \mkern-2mu ],\\
		 \rho_{}  \mkern-1mu \otimes \mkern-1mu  \rho_{0}
			& &  \rho_{}  \mkern-1mu \otimes \mkern-1mu  \rho_{0}  \mkern-1mu \otimes \mkern-1mu  \rho_{0}
			& & & &  \rho_{int}  \mkern-1mu \otimes \mkern-1mu  \rho_{int}  &  \rho_{int}  \\
		\theta_{0} \mkern-1mu \otimes \mkern-1mu \theta_{{\bm{u}}}
			& & (\theta_{0} \mkern-3mu \otimes \mkern-3mu \theta_{0}) \mkern-3mu \otimes \mkern-3mu \theta_{{\bm{u}}}
			& & & & \theta_{{\bm{u}}}{} \mkern-3mu \otimes \mkern-3mu \theta_{{\bm{u}}}
			& \theta_{{\bm{u}}}
		}
		\label{eqn_thm_comparison_between_InfGal_and_Gal_decomposition2}
	\end{align}
	where the isomorphism at the left is \eqref{eq:RkL_linear_disjointness} and the $D$- and $D_{I \mkern-1.9mu D^{n}}$-module algebra structures are indicated in the two rows below the isomorphisms ($\theta_{0}$ denotes the trivial $n$-variate iterative derivation).
	For every commutative $L'$-algebra $A$, the isomorphism \eqref{eqn_thm_comparison_between_InfGal_and_Gal_decomposition2} gives rise to an isomorphism of $D \otimes_C D_{I \mkern-1.9mu D^{n}}$-module algebras
	\begin{align}
		\xymatrixcolsep{0.7em}
		\xymatrix{
			 \rho_{0} (\mkern-1mu L \mkern-1mu)[ \rho_{} (\mkern-1mu R\mkern-1mu)] \hat{\otimes}_L \mkern-0.5mu A\mkern-0.5mu\llbracket \mkern-1mu {\bm{w}} \mkern-1mu \rrbracket \mkern-3mu \ar[r]
				& \mkern-2mu  \rho_{0} (\mkern-1mu L\mkern-0.5mu' \mkern-1mu)[ \rho_{} (\mkern-1mu R \mkern-1mu)] \mkern-2mu \hat{\otimes}_{L\mkern-0.5mu'} \mkern-2mu A\mkern-0.5mu\llbracket \mkern-1mu {\bm{w}} \mkern-1mu \rrbracket \mkern-3mu \ar[r]
				& \mkern-2mu (\mkern-1mu R \mkern-2mu \otimes_k \mkern-3mu L\mkern-0.5mu' \mkern-1mu) \hat{\otimes}_{L\mkern-0.5mu'} \mkern-1.5mu A \mkern-0.5mu \llbracket \mkern-1mu {\bm{w}} \mkern-1mu \rrbracket \mkern-3mu \ar[r]
				& \mkern-2mu (\mkern-1mu R \mkern-2mu \otimes_k \mkern-3mu A \mkern-1mu) \mkern-1mu \llbracket \mkern-1mu {\bm{w}} \mkern-1mu \rrbracket,
		}
		\label{eqn_thm_comparison_between_InfGal_and_Gal_decomposition2b}
	\end{align}
	where on $(R \otimes_k A \mkern-1mu) \llbracket {\bm{w}} \rrbracket$ the $D$-module algebra structure is given by $ \rho_{}  \otimes_k  \rho_{0} $ on the coefficients with respect to ${\bm{w}}$ (as in \eqref{eqn_module_algebra_structure_on_images}) and the $D_{I \mkern-1.9mu D^{n}}$-module algebra structure is given by the $n$-variate iterative derivation $\theta_{{\bm{w}}}$ (cf. \ref{ex:iterative_derivation_with_respect_to_variables}).

	Given a $\varphi \mkern-2mu \in \mkern-2mu \operatorname{Ume}(R | K)(A)$,
	we obtain by composition with the vertical isomorphisms of $D \otimes_C D_{I \mkern-1.9mu D^{n}}$-module algebras, given by \eqref{eqn_thm_comparison_between_InfGal_and_Gal_decomposition2b}, in the diagram
	\begin{align*}
	\xymatrixrowsep{2.0pc}
	\xymatrixcolsep{2em}
	\xymatrix{
		 \rho_{0} (L)[ \rho_{} (R)] \hat{\otimes}_L A\llbracket {\bm{w}} \rrbracket \ar[r]^{\varphi} \ar[d]^{\sim}
			&  \rho_{0} (L)[ \rho_{} (R)] \hat{\otimes}_L A\llbracket {\bm{w}} \rrbracket \\
			(R \mkern-3mu \otimes_{k} \mkern-4mu A)\llbracket {\bm{w}} \rrbracket \ar[r]^{\sigma\llbracket {\bm{w}} \rrbracket}
			& (R \mkern-3mu \otimes_{k} \mkern-4mu A)\llbracket {\bm{w}} \rrbracket, \ar[u]_{\sim}
	}
	\end{align*}
	an automorphism of the $D \otimes_C D_{I \mkern-1.9mu D^{n}}$-module algebra $(R \otimes_k A)\llbracket {\bm{w}} \rrbracket$, which is of the form $\sigma\llbracket {\bm{w}} \rrbracket$, where $\sigma$ is an automorphism of the $D$-module algebra $R \otimes_k A$ of constants of the iterative derivation $\theta_{{\bm{w}}}$ on $(R \otimes_k A)\llbracket {\bm{w}} \rrbracket$.
	Then $\sigma$ is an element of the kernel~\eqref{eqn_thm_comparison_between_InfGal_and_Gal_kernel_RK} and this yields an isomorphism of groups between $\operatorname{Ume}(R | K)(A)$ and \eqref{eqn_thm_comparison_between_InfGal_and_Gal_kernel_RK}.
\end{proof}

\begin{cor}
	Under the assumptions of theorem~\ref{thm_comparison_between_InfGal_and_Gal} there exists a finite étale extension $L'$ of $L$ and an isomorphism
	\[ \operatorname{Ume}(L | K)(L'[\varepsilon]/(\varepsilon^2)) \cong \operatorname{Lie}(\operatorname{Gal}(L|K)) \otimes_{k} L'. \]
	\label{cor_comparision_of_Lie_algebra_for_PV_extensions}
\end{cor}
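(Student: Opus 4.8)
The plan is to evaluate the isomorphism of theorem~\ref{thm_comparison_between_InfGal_and_Gal} at the ring of dual numbers $L'[\varepsilon]/(\varepsilon^2)$ and to recognise the outcome as a Lie algebra by means of the standard description of the Lie algebra of an affine group scheme as the kernel of the reduction map on dual numbers. First I would fix the finite étale extension $L'$ of $L$ together with the isomorphism of formal group schemes over $L'$
\[ \operatorname{Ume}(L | K) \times_L L' \cong \widehat{\operatorname{Gal}(L|K)}_{L'} \]
furnished by theorem~\ref{thm_comparison_between_InfGal_and_Gal}. Since $L'[\varepsilon]/(\varepsilon^2)$ is an $L'$-algebra, restriction of scalars along $L \to L'$ gives $\operatorname{Ume}(L | K)(L'[\varepsilon]/(\varepsilon^2)) = (\operatorname{Ume}(L | K) \times_L L')(L'[\varepsilon]/(\varepsilon^2))$, so evaluating the displayed isomorphism at $L'[\varepsilon]/(\varepsilon^2)$ yields an isomorphism of groups
\[ \operatorname{Ume}(L | K)(L'[\varepsilon]/(\varepsilon^2)) \cong \widehat{\operatorname{Gal}(L|K)}_{L'}(L'[\varepsilon]/(\varepsilon^2)). \]

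Next I would compute the right-hand side. The formal group scheme $\widehat{\operatorname{Gal}(L|K)}_{L'}$ associated to the affine group scheme $\operatorname{Gal}(L|K)_{L'} = \operatorname{Gal}(L|K) \times_k L'$ has as its $A$-points precisely those $A$-points of $\operatorname{Gal}(L|K)_{L'}$ that reduce to the identity modulo the nilradical, that is $\widehat{\operatorname{Gal}(L|K)}_{L'}(A) = \operatorname{Ker}(\operatorname{Gal}(L|K)_{L'}(A) \to \operatorname{Gal}(L|K)_{L'}(A/N(A)))$, exactly as for the examples $\hat{{\mathbb G}}_a$ and $\hat{{\mathbb G}}_m$ occurring earlier. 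For $A = L'[\varepsilon]/(\varepsilon^2)$ one has $N(A) = (\varepsilon)$ and $A/N(A) = L'$, whence
\[ \widehat{\operatorname{Gal}(L|K)}_{L'}(L'[\varepsilon]/(\varepsilon^2)) = \operatorname{Ker}(\operatorname{Gal}(L|K)_{L'}(L'[\varepsilon]/(\varepsilon^2)) \to \operatorname{Gal}(L|K)_{L'}(L')) = \operatorname{Lie}(\operatorname{Gal}(L|K)_{L'}), \]
the last equality being the defining description of the Lie algebra of an affine group scheme over $L'$ as the kernel of the reduction map on dual numbers.

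Finally I would invoke the compatibility of the Lie algebra functor with base change: for $\operatorname{Gal}(L|K) = \operatorname{Spec}H$ over $k$ one has $\operatorname{Lie}(\operatorname{Gal}(L|K)_{L'}) = \operatorname{Lie}(\operatorname{Gal}(L|K) \times_k L') \cong \operatorname{Lie}(\operatorname{Gal}(L|K)) \otimes_k L'$. Composing the three identifications gives the asserted isomorphism. The one point that deserves care, and which I expect to be the main (if modest) obstacle, is the verification that passing to the formal completion does not alter the value on dual numbers: every $L'[\varepsilon]/(\varepsilon^2)$-point of $\operatorname{Gal}(L|K)_{L'}$ reducing to the identity automatically lies in the formal neighbourhood of the identity section, which holds because $\varepsilon$ is nilpotent so that reduction modulo $N(L'[\varepsilon]/(\varepsilon^2))$ is the same as reduction modulo $(\varepsilon)$; hence the tangent space at the identity is seen equally by $\operatorname{Gal}(L|K)_{L'}$ and by its formal completion.
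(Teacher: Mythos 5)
Your proposal is correct and follows exactly the paper's route: the paper's own proof is the one-line observation that the corollary "follows immediately from theorem~\ref{thm_comparison_between_InfGal_and_Gal} by taking $A = L'[\varepsilon]/(\varepsilon^2)$", and your argument is simply the careful unpacking of that evaluation — identifying the dual-number points of the formal completion with the kernel of the reduction map, hence with the Lie algebra, and invoking base change. The extra verifications you supply (that completion does not change the value on dual numbers, and that $\operatorname{Lie}$ commutes with base extension) are exactly the implicit steps the paper leaves to the reader.
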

\begin{proof}
	This follows immediately from theorem~\ref{thm_comparison_between_InfGal_and_Gal} by taking $A = L'[\varepsilon]/(\varepsilon^2)$.
\end{proof}

In the case where $D = D_{end}$, the statement of corollary~\ref{cor_comparision_of_Lie_algebra_for_PV_extensions} is similar to the one of \cite[Theorem 3.3]{Morikawa:2009} and to \cite{Umemura:2010}.
Taking $D = D_{der}$, it provides a similar result as \cite[Theorem 5.15]{Umemura:1996b} in the case of finitely generated Picard-Vessiot extensions of differential fields in characteristic zero.

\begin{ex}
Let $L | K$ be the extension of example~\ref{ex:function_with_constant_derivation} (resp. example~\ref{ex:exponential_function}).
It is a Picard-Vessiot extension, in theorem~\ref{thm_comparison_between_InfGal_and_Gal} the extension $L'$ can be chosen to be $L$ and for every commutative $L$-algebra $A$,  the element $\varphi \in \operatorname{Ume}(L | K)(A)$ given by $\varphi( \rho_{} (y) \otimes 1) =  \rho_{} (y) \otimes 1 + 1 \otimes a_0$ for some $a_0 \in N(A)$ (resp. $\varphi( \rho_{} (y) \otimes 1) =  \rho_{} (y) \otimes (1+a_1)$ for some $a_1 \in N(A)$) corresponds under the isomorphism in theorem~\ref{thm_comparison_between_InfGal_and_Gal} to the automorphism $\sigma \in \widehat{\operatorname{Gal}(L|K)}(A)$ that fulfills $\sigma(y \otimes 1) = y \otimes 1 + 1 \otimes a_0$ (resp. $\sigma(y \otimes 1) = y \otimes (1+a_1)$).
\end{ex}

\begin{acknowledgements}
	I thank Hiroshi Umemura for helpful discussions on the topic of the article.
\end{acknowledgements}

\ifx \undefined \cprime \def \cprime {$\mathsurround=0pt '$}\fi
  \def\polhk#1{\setbox0=\hbox{#1}{\ooalign{\hidewidth
  \lower1.5ex\hbox{`}\hidewidth\crcr\unhbox0}}}

\end{document}